\def\NZQ{\mathbb}               
\def\NN{{\NZQ N}}
\def\QQ{{\NZQ Q}}
\def\ZZ{{\NZQ Z}}
\def\RR{{\NZQ R}}
\newtheorem{Theorem}{Theorem}[section]
\newtheorem{Lemma}[Theorem]{Lemma}
\newtheorem{Corollary}[Theorem]{Corollary}
\newtheorem{Proposition}[Theorem]{Proposition}
\newtheorem{Remark}[Theorem]{Remark}
\let\epsilon\varepsilon
\let\phi=\varphi
\let\kappa=\varkappa
\begin{document}

\title{Rees algebras and the reduced fiber cone
of divisorial filtrations  on two dimensional normal local rings}
\author{Steven Dale Cutkosky}
\thanks{Partially supported by NSF grant DMS-2348849}

\address{Steven Dale Cutkosky, Department of Mathematics,
University of Missouri, Columbia, MO 65211, USA}
\email{cutkoskys@missouri.edu}

\keywords{morphism, valuation, graded filtration, Rees Algebra}
\subjclass[2000]{primary 14B05;  secondary 14B25, 13A18}

\begin{abstract} 
Let $\mathcal I=\{I_n\}$ be a $\QQ$-divisorial filtration on a two dimensional normal excellent local ring $(R,m_R)$. Let $R[\mathcal I]=\oplus_{n\ge 0}I_n$ be the Rees algebra of $\mathcal I$ and $\tau:\mbox{Proj}R[\mathcal I])\rightarrow \mbox{Spec}(R)$ be the natural morphism. The reduced fiber cone of $\mathcal I$ is the $R$-algebra $R[\mathcal I]/\sqrt{m_RR[\mathcal I]}$, and the reduced exceptional fiber of $\tau$ is $\mbox{Proj}(R[\mathcal I]/\sqrt{m_RR[\mathcal I]})$.
In \cite{CNg}, we showed that in spite of the fact that $R[\mathcal I]$ is often not Noetherian,  $m_RR[\mathcal I]$ always has only finitely many minimal primes, so  $\tau^{-1}(m_R)$ has only finitely many irreducible components. 
In Theorem \ref{Theorem6}, we give an explicit description of the scheme structure of $\mbox{Proj}(R[\mathcal I])$. As a corollary, we obtain in Theorem \ref{Theorem11} a new proof of a theorem of F. Russo, showing that $\mbox{Proj}(R[\mathcal I])$ is always Noetherian and that $R[\mathcal I]$ is Noetherian  if and only if $\mbox{Proj}(R[\mathcal I])$ is a proper $R$-scheme.
 In Corollary \ref{Cor7} to Theorem \ref{Theorem6}, we give an explicit description of the scheme structure of the reduced exceptional fiber $\mbox{Proj}(R[\mathcal I]/\sqrt{m_RR[\mathcal I]})$ of $\tau$, in terms of the possible values 0, 1 or 2 of the analytic spread 
$\ell(\mathcal I)=\dim R[\mathcal I]/m_RR[\mathcal I]$. 
In the case that $\ell(\mathcal I)=0$, $\tau^{-1}(m_R)$ is the emptyset; this case can only occur if $R[\mathcal I]$ is not Noetherian. 

At the end of the introduction, we give a simple example of a graded filtration $\mathcal J$ of a two dimensional regular local ring $R$  such that $\mbox{Proj}(R[\mathcal J])$ is not Noetherian. This filtration is necessarily not divisorial. 
\end{abstract}

\maketitle 

\section{Introduction}\label{SecInt}
Let $(R,m_R)$ be a local ring, which we assume to be Noetherian. A graded filtration of $R$ is a filtration $\mathcal I=\{I_n\}$  of $R$ which satisfies $I_mI_n\subset I_{m+n}$ for all $m,n$.

The Rees algebra of a graded filtration $\mathcal I$ of $R$ is
$$
R[\mathcal I]=\oplus_{n\ge 0}I_n.
$$
We will say that $\mathcal I$ is Noetherian if $R[\mathcal I]$ is Noetherian, which holds if and only if $R[\mathcal I]$ is a finitely generated $R$-algebra.

Expositions of the theory of complete ideals, integral closure of ideals and their relation to valuation ideals, Rees valuations, analytic spread and birational morphisms can be found, from different perspectives,  in  \cite{ZS2}, \cite{HS}, \cite{Li2} and \cite{Li3}. The book \cite{HS} and the article \cite{Li3} contain references to  original work in this subject. Concepts in this introduction which are not defined in this section or in these references can be found later in this paper. A survey of recent work on symbolic algebras is given in \cite{DDGHN}. A different notion of analytic spread for families of ideals is given in \cite{DM}.
The recent papers \cite{ORWY}and \cite{OWY} use geometric methods to  undertand ideal theory in two dimensional normal local domains.

In this paper, we study properties of Rees algebras of graded filtrations and properties of the $R$-scheme $\mbox{Proj}(R[\mathcal I])$. In particular, we are interested in when good  properties of Noetherian filtrations hold for NonNotherian filtrations.  


$\QQ$-divisorial filtrations of a domain (which we will call divisorial filtrations in this introduction) are defined in  Section \ref{SecDiv}.  Divisorial filtrations appear naturally in many contexts.
Examples include the filtration of integral closures of powers of an ideal $I$, $\{\overline{I^n}\}$, and the filtration of symbolic powers $\{P^{(n)}\}$ of a prime ideal $P$ in a domain $R$ such that $R_p$ is a regular local ring. The Rees algebra $R[\mathcal I]$ of a divisorial filtration is often not Noetherian. For instance, there are examples of filtrations of symbolic powers of height two prime ideals in a three dimensional regular local ring which are not Noetherian (\cite{Na}, \cite{Rob}, \cite{GNW}). Further, the symbolic filtration of a prime ideal $P$ in a normal two dimensional local ring is Noetherian  if and only if  $P$ is torsion in the class group of $R$. 

We focus on the situation of divisorial filtrations in a two dimensional normal excellent local ring $R$. In general, such $R$ will have lots of divisorial filtrations which  are not Noetherian.  In fact, if $R$ is a complete normal local ring of dimension two whose residue field is algebraically closed of characteristic zero, then all divisorial filtrations of $R$ are Noetherian if and only if  $R$ has a rational singularity.  
By Theorem 12.1 of \cite{Li2}, if $R$ has a rational singularity, then $\mathcal I$ is Noetherian for all divisorial filtrations $\mathcal I$ of $R$, and by the Corollary to Theorem 4 \cite{C2}, if $\mathcal I$ is Noetherian for all divisorial filtrations $\mathcal I$, then $R$ has a rational singularity.

The starting point of this paper is a result in 
Proposition 6.7 \cite{CNg} (recalled in Proposition \ref{Prop1} later in this paper) showing that    
if $\mathcal I$ is a divisorial filtration on a two dimensional normal excellent local ring, then $m_RR[\mathcal I]$ has only finitely many minimal primes. 
This leads to the question of  which other Noetherian like properties $\mathcal I$ will have for an arbitrary divisorial filtration $\mathcal I$? We also ask the related question of what  we can say about $\mbox{Proj}(R[\mathcal I])$, and how it is like or different from the case when $R[\mathcal I]$ is Noetherian?  

The analytic spread $\ell(I)$ of an ideal $I$ in a (Noetherian) local ring $(R,m_R)$ is defined to be 
$\ell(I)=\dim R[It]/m_RR[It]$. We have that
\begin{equation}\label{eqI1}
\ell(I)\le\dim R,
\end{equation}
 by Proposition 5.1.6 \cite{HS} and 
 \begin{equation}\label{eqI2}
 \mbox{ht}(I)\le \ell(I)
 \end{equation}
  by Corollary 8.3.9 \cite{HS}.

 The analytic spread is extended to graded filtrations in \cite{CPS}. For a graded filtration $\mathcal I$ on a local ring $R$, we define
 $\ell(\mathcal I)=\dim R[\mathcal I]/m_RR[\mathcal I]$. We have that  
\begin{equation}\label{eqASUB}
\ell(\mathcal I)\le \dim R
\end{equation}
by Lemma 3.6  \cite{CPS}, extending the inequality of (\ref{eqI1}) from ideals to graded filtrations. However, the inequality (\ref{eqI2}) fails completely for filtrations, even for  divisorial filtrations. Theorem 1.11 \cite{CPS} gives an example of a height two prime  ideal $P$ in a three dimensional regular local ring such that the symbolic filtration $\{P^{(n)}\}$ satisfies $\ell(\{P^{(n)}\})=0$, and Example 7.3 \cite{CNg} gives an example of a height one  ideal $I$ in a two dimensional normal excellent local ring $R$ such that $\ell(\{I^{(n)}\})=0$.

\subsection{$\QQ$-divisorial filtrations on two dimensional excellent normal local domains}\label{SubIn}
From now on, we will assume that $(R,m_R)$ is a two dimensional, excellent normal local ring with quotient field $K$, and $\mathcal I=\{I_n\}$ is a $\QQ$-divisorial filtration on $R$. In this situation, we are able to give a complete description of $\mbox{Proj}(R[\mathcal I])$. We first give a quick overview of a construction from \cite{CNg}. More details are given in Section \ref{SecRes}.

There exists  a birational projective morphism 
\begin{equation}\label{eqI5}
\pi:X\rightarrow\mbox{Spec}(R)
\end{equation}
 such that $X$ is nonsingular, and an effective  $\QQ$-divisor $\Delta$ on $X$ such that  $-\Delta$ is nef and 
 \begin{equation}\label{eqI6}
 I_n=\Gamma(X,\mathcal O_X(-\lceil n\Delta\rceil ))
 \end{equation}
  for all $n\ge 0$. We have that 
$\pi:X\setminus \pi^{-1}(m_R)\rightarrow \mbox{Spec}(R)\setminus\{m_R\}$ is an isomorphism.

We have that  $E_1+\cdots+E_r$ is a simple normal crossings divisor on $X$, where $E_1,\ldots,E_r$ are the prime exceptional divisors of $ \pi$.

 The Rees algebra associated to $\mathcal I$, as defined earlier, is   
 $$
 R[\mathcal I]=\oplus_{n\ge 0}I_n=\oplus_{n\ge 0}\Gamma(X,\mathcal O_X(-\lceil n\Delta\rceil)).
  $$
   Define $Z(\mathcal I)=\mbox{Proj}(R[\mathcal I])$, with natural morphism $\tau:Z(\mathcal I) \rightarrow \mbox{Spec}(R)$. We have that 
  $Z(\mathcal I)\setminus \tau^{-1}(m_R) \cong \mbox{Spec}(R)\setminus \{m_R\}$
   by Lemma \ref{LemmaTau}. 
   
   In Proposition 6.7 \cite{CNg} (restated as Proposition \ref{Prop1} later in this paper) it is shown that 
   $P_j=P_{E_j}:=\oplus_{n\ge 0}\Gamma(X,\mathcal O_X(-\lceil n\Delta\rceil-E_j))$, where $E_j$ for $1\le j\le r$ are the exceptional divisors of $\pi$, are prime ideals in $R[\mathcal I]$ and $\sqrt{m_RR[\mathcal I]}=\cap_{i=1}^rP_i$.
  It thus follows that $m_RR[\mathcal I]$ has only finitely many minimal primes, as was stated earlier in this introduction.

In the following theorem, which will be proven in Section \ref{SecAS},
we obtain finer information about the  structure of  the fiber cone $R[\mathcal I]/m_RR[\mathcal I]$ of $\mathcal I$.

\begin{Theorem}\label{Theorem2} Let $R$ be a two dimensional excellent local ring and $\mathcal I$ be a $\QQ$-divisorial filtration on $R$, so that $\ell(\mathcal I)\le \dim R=2$.
 The following occurs in the three possible cases $0,1,2$ of analytic spread $\ell(\mathcal I)$ of $\mathcal I$.
\begin{enumerate} 
\item[1)] If $\ell(\mathcal I)=0$, then $\sqrt{m_RR[\mathcal I]}=m_R\oplus R[\mathcal I]_+$ and $R[\mathcal I]$ is not a finitely generated $R[\mathcal I]$-algebra.
\item[2)] If $\ell(\mathcal I)=1$, then $R[\mathcal I]$ is a finitely generated $R$-algebra and there is a unique exceptional curve $E$ such that $\sqrt{m_RR[\mathcal I]}=P_E$  with $\dim R[\mathcal I]/P_E=1$. 
\item[3)] If $\ell(\mathcal I)=2$, then $\sqrt{m_RR[\mathcal I]}=\cap P_{E_i}$ where the intersection is over all the prime ideals $P_{E_i}$ such that 
$\dim R[\mathcal I]/P_{E_i}=2$. 
\end{enumerate}
\end{Theorem}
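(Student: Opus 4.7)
The plan is to study, for each exceptional divisor $E_j$ of $\pi$, the graded quotient $R[\mathcal{I}]/P_{E_j}$ and relate its Krull dimension to the intersection number $-\Delta\cdot E_j$. Proposition~\ref{Prop1} gives
$$
\sqrt{m_R R[\mathcal{I}]}=\bigcap_{j=1}^{r}P_{E_j},
$$
so $\ell(\mathcal{I})=\dim R[\mathcal{I}]/\sqrt{m_R R[\mathcal{I}]}=\max_j\dim R[\mathcal{I}]/P_{E_j}$; the three cases of the theorem correspond to this maximum being $0$, $1$, or $2$.

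The central technical input is the short exact sequence
$$
0\to\mathcal{O}_X(-\lceil n\Delta\rceil-E_j)\to\mathcal{O}_X(-\lceil n\Delta\rceil)\to\mathcal{O}_X(-\lceil n\Delta\rceil)|_{E_j}\to 0,
$$
which exhibits $R[\mathcal{I}]/P_{E_j}$ as a graded subring of $\bigoplus_{n\ge 0}H^0(E_j,\mathcal{O}_X(-\lceil n\Delta\rceil)|_{E_j})$. The degree of this restricted line bundle on the smooth projective curve $E_j$ equals $-\lceil n\Delta\rceil\cdot E_j = n(-\Delta\cdot E_j)+O(1)$, nonnegative since $-\Delta$ is nef, and it grows linearly in $n$ exactly when $-\Delta\cdot E_j>0$. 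Combining Riemann--Roch on $E_j$ with a construction of enough global sections on $X$ restricting nontrivially to $E_j$ yields $\dim R[\mathcal{I}]/P_{E_j}=2$ when $-\Delta\cdot E_j>0$ and $\dim R[\mathcal{I}]/P_{E_j}\le 1$ when $-\Delta\cdot E_j=0$.

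Given this dichotomy, Case $\ell(\mathcal{I})=0$ is immediate: each $P_{E_j}$ is a zero-dimensional graded prime, hence equals the unique graded maximal ideal $m_R\oplus R[\mathcal{I}]_+$, and the intersection collapses to this ideal. For the failure of finite generation, if $R[\mathcal{I}]$ were finitely generated over $R$, then the fiber cone $R[\mathcal{I}]/m_R R[\mathcal{I}]$ would be a finitely generated graded $k$-algebra whose $k$-th Veronese, for large $k$, is the fiber cone of the nonzero ideal $I_k$; passing to a Veronese preserves dimension, so $\ell(\mathcal{I})=\ell(I_k)\ge 1$ by~(\ref{eqI2}), contradicting $\ell(\mathcal{I})=0$. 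In Case $\ell(\mathcal{I})=2$, any $P_{E_k}$ with $\dim R[\mathcal{I}]/P_{E_k}<2$ turns out to contain some $P_{E_j}$ with $\dim R[\mathcal{I}]/P_{E_j}=2$: geometrically, the exceptional curve $E_k$ (with $-\Delta\cdot E_k=0$) is adjacent, in the connected configuration $E_1\cup\cdots\cup E_r$, to some $E_j$ with $-\Delta\cdot E_j>0$, and this incidence translates into the prime containment. Hence such $P_{E_k}$ are not minimal primes of $m_R R[\mathcal{I}]$ and may be dropped, reducing the intersection to $\bigcap_{\dim=2}P_{E_j}$.

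The main obstacle is Case $\ell(\mathcal{I})=1$, where one must prove both the uniqueness of $E$ with $\dim R[\mathcal{I}]/P_E=1$ and that $R[\mathcal{I}]$ is finitely generated. The plan is to contract on $X$ the exceptional curves with $\dim R[\mathcal{I}]/P_{E_j}=0$, obtaining a smaller normal model on which $-\Delta\cdot E_j=0$ for every remaining exceptional curve. An extremality argument exploiting the negative-definiteness of the remaining intersection matrix together with the connectedness of the reduced exceptional fiber then pins down a unique $E$, and a suitable Veronese of $\mathcal{I}$ is realized as the Rees algebra of a single complete ideal, yielding finite generation. The delicate point is controlling the fractional corrections $\lceil n\Delta\rceil-n\Delta$ through the contraction and Veronese passage, and establishing the connectedness of the reduced exceptional fiber in this non-Noetherian setting.
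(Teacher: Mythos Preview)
Your framework is the same as the paper's---use Proposition~\ref{Prop1} to write $\sqrt{m_RR[\mathcal I]}=\cap_j P_{E_j}$ and Proposition~\ref{Prop2} to read off $\dim R[\mathcal I]/P_{E_j}$ from the sign of $(\Delta\cdot E_j)$---and your treatment of the case $\ell(\mathcal I)=0$ is fine. But the arguments you sketch for the other two cases have real gaps, and in both cases the missing ingredient is the same: the stable base locus $\mbox{sBL}(-n_0\Delta)$ and Lemmas~\ref{Lemma2}, \ref{Lemma5}, \ref{Lemma11} governing it.

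For $\ell(\mathcal I)=2$, the assertion that adjacency of $E_k$ (with $(\Delta\cdot E_k)=0$) to some $E_j$ with $(\Delta\cdot E_j)<0$ ``translates into the prime containment'' $P_{E_j}\subset P_{E_k}$ is not automatic. Adjacency only gives $P_{E_j},P_{E_k}\subset Q_q$ for the point ideal $Q_q$ at $q\in E_j\cap E_k$, and one needs $P_{E_k}=Q_q$. The paper's Proposition~\ref{Prop5} obtains this equality along a chain by showing each intersection point $q_i$ lies outside $\mbox{sBL}(-\Delta)$ (so $Q_{q_i}\in\mbox{Proj}(R[\mathcal I])$ and $\dim R[\mathcal I]/Q_{q_i}\ge 1$), then squeezing against $\dim R[\mathcal I]/P_{E_i}\le 1$. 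Without controlling which points lie in the stable base locus you cannot rule out $Q_{q_i}=m_R\oplus R[\mathcal I]_+$, and the chain collapses. For $\ell(\mathcal I)=1$, your plan---contract the curves with $\dim R[\mathcal I]/P_{E_j}=0$, then invoke negative-definiteness and realize a Veronese as the Rees algebra of a single ideal---does not establish finite generation. In this case every exceptional curve already has $(\Delta\cdot E_j)=0$, so contraction does not improve the intersection picture, and there is no evident mechanism by which negative-definiteness forces $\mathcal O_X(-n\Delta)$ to be globally generated for large $n$. The paper instead argues directly that $\mbox{sBL}(-\Delta)=\emptyset$: if not, some curve lies in $\mbox{sBL}(-\Delta)$ (Lemma~\ref{Lemma5}), and since no curve has $(\Delta\cdot E)<0$, Lemma~\ref{Lemma2} plus connectedness of $\pi^{-1}(m_R)$ force the entire exceptional set into $\mbox{sBL}(-\Delta)$, whence $\ell(\mathcal I)=0$. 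Then Lemma~\ref{Lemma11} gives finite generation, and properness of $\tau$ together with Zariski connectedness of $\tau^{-1}(m_R)$ yields a single point and the unique $P_E$.
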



All three cases of analytic spread do occur. 
As mentioned earlier in the introduction, Example 7.3 \cite{CNg} gives an example where $\ell(\mathcal I)=0$. $\ell(\mathcal I)=1$ if $\mathcal I$ is the symbolic filtration of a height one prime ideal $P$ which is torsion in the class group of $R$ (for example if $P$ is a principal ideal). Every divisorial filtration of $m_R$-primary ideals $\mathcal I$ in $R$ satisfies $\ell(\mathcal I)=2$ by Theorem 1.3 \cite{CPS}.

The concept of the stable base locus of a line bundle on a projective variety is discussed in \cite{LA}. In Section \ref{SecsBL}, we develop it in our more local context. 

Let $\mathcal L$ be an invertible sheaf on $X$. Define the base locus of $\Gamma(X,\mathcal L)$ to be 
$$
\mbox{BL}(\Gamma(X,\mathcal L))=\{p\in X\mid \Gamma(X,\mathcal L)\mathcal O_{X,p}\ne \mathcal L_p\}.
$$
This is a closed subset of $X$ (regarded as an algebraic set and not as a scheme). The stable base locus of $\mathcal L$ is
$\mbox{sBL}(\mathcal L)=\cap_{n\ge 1}\mbox{BL}(\Gamma(X,\mathcal L^n))$,
which is also a closed subset of $X$. By Lemma \ref{Lemma4} (whose proof is essentially the same as the proof of Proposition 2.1.21 \cite{LA})
there exists $n_1$ such that 
$$
\mbox{BL}(\Gamma(X,\mathcal L^{nn_1}))=\mbox{sBL}(\mathcal L)
$$ for all $n\ge n_1$.

We will denote $\mbox{sBL}(\mathcal O_X(-n\Delta))$ by $\mbox{sBL}(-n\Delta)$ when $n\Delta$ is an integral divisor.

In the final section, Section \ref{SecNoeth}, we determine the scheme structure of $Z(\mathcal I)=\mbox{Proj}(R[\mathcal I])$. In Subsection \ref{SubProof}, we establish the following theorem, which shows that $Z(\mathcal I)$ has a remarkably simple scheme structure.

\begin{Theorem}\label{Theorem6} Let $R$ be a two dimensional normal excellent local ring and $\mathcal I$ be a $\QQ$-divisorial filtration in $R$.
Let $\pi:X\rightarrow \mbox{Spec}(R)$ and $\Delta$ be defined as in (\ref{eqI5}) and (\ref{eqI6}), so that $I_n=\Gamma(X,\mathcal O_X(-\lceil n\Delta \rceil))$ for $n\ge 0$.
Let $n_0$ be the smallest positive integer $n$ such that $n\Delta$ is a $\ZZ$-divisor and 
$$
\mbox{BL}(\Gamma(X,\mathcal O_X(-mn\Delta)))=\mbox{sBL}(-n\Delta)\mbox{ for all $m\ge 1$}. 
$$
Then the  exceptional curves $E$ of $\pi:X\rightarrow \mbox{Spec}(R)$ which satisfy the following condition:
\begin{equation}\label{eqTh6}
\mbox{$E$ is not contained in $\mbox{sBL}(-n_0\Delta)$ and $(E\cdot \Delta)=0$}
\end{equation}
are disjoint from $\mbox{sBL}(-n_0\Delta)$. There is a natural birational proper morphism 
$$
X\setminus\mbox{sBL}(-n_0\Delta)\rightarrow \mbox{Proj}(R[\mathcal I])
$$
 which is obtained by contracting  all exceptional curves $E$ satisfying (\ref{eqTh6}). In particular, $\mbox{Proj}(R[\mathcal I])$ is a finite type, normal, integral $R$-scheme which is birational to $\mbox{Spec}(R)$.
\end{Theorem}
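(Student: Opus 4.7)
My plan is to construct a natural morphism $\phi:U\to Y$ where $U:=X\setminus\mbox{sBL}(-n_0\Delta)$ and $Y:=\mbox{Proj}(R[\mathcal I])$, and then verify the listed properties.

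For the disjointness assertion, suppose $E$ is exceptional with $E\not\subset\mbox{sBL}(-n_0\Delta)$ and $(E\cdot\Delta)=0$. By the defining property of $n_0$, $\mbox{sBL}(-n_0\Delta)=\mbox{BL}(\Gamma(X,\mathcal O_X(-n_0\Delta)))$, so some $s\in I_{n_0}$ satisfies $s|_E\not\equiv 0$. The line bundle $\mathcal O_X(-n_0\Delta)|_E$ has degree $-n_0(E\cdot\Delta)=0$ on the smooth projective $k$-curve $E$ (smoothness from the SNC hypothesis). Since a degree-zero line bundle on such a curve admitting a nonzero global section must be trivial with that section nowhere-vanishing, $s$ generates $\mathcal O_X(-n_0\Delta)$ at every point of $E$, forcing $E\cap\mbox{sBL}(-n_0\Delta)=\emptyset$.

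To construct $\phi$, I use the canonical Veronese identification $\mbox{Proj}(R[\mathcal I])\cong\mbox{Proj}(R[\mathcal I^{(n_0)}])$ with $R[\mathcal I^{(n_0)}]=\oplus_n I_{nn_0}$. For each $s\in I_{n_0}$, set $V_s:=\{q\in X: s_q\mbox{ generates }\mathcal O_X(-n_0\Delta)_q\}$; then $\bigcup_s V_s=U$ by stabilization at $n_0$. On $V_s$, the section $s$ trivializes the line bundle, so for $f\in I_{nn_0}$ the ratio $f/s^n$ is a regular function on $V_s$, yielding an $R$-algebra map $(R[\mathcal I^{(n_0)}])_{(s)}\to\Gamma(V_s,\mathcal O_X)$ and hence a morphism $V_s\to D_+(s)\subset Y$. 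These glue on overlaps to $\phi:U\to Y$, which is birational because both $\pi$ and (by Lemma \ref{LemmaTau}) $\tau$ are isomorphisms over $\mbox{Spec}(R)\setminus\{m_R\}$.

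For the contraction, on any $E$ satisfying (\ref{eqTh6}) the disjointness puts $E\subset V_s$ whenever $s|_E\not\equiv 0$, and $(f/s^n)|_E$ is a global section of the trivial bundle $\mathcal O_X(-nn_0\Delta)|_E\cong\mathcal O_E$, hence lies in the finite field extension $H^0(E,\mathcal O_E)$ of $k$, so all such ratios are constant and $\phi|_E$ factors through a single closed point. For properness I apply the valuative criterion: given a DVR $R'\subset K'$ and a commutative square, properness of $\pi:X\to\mbox{Spec}(R)$ provides a unique lift $\mbox{Spec}(R')\to X$ of $\mbox{Spec}(K')\to U\subset X$, and I verify the lift lands in $U$ by using that the image point $y\in Y$ lies in some $D_+(s)$ with $s\in I_{n_0}$, so the induced valuation $\nu$ on $K'$ satisfies $\nu(s)=\min_{f\in I_{n_0}}\nu(f)$, which in turn forces the center of $\nu$ on $X$ to be a point where $s$ generates $\mathcal O_X(-n_0\Delta)$. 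The structural claims on $Y$ (finite type, normal, integral, birational to $\mbox{Spec}(R)$) then follow from $\phi$ being a proper, birational, surjective morphism from the nonsingular integral $R$-scheme $U$.

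The main obstacle is the properness of $\phi$: since $U$ is not itself proper over $R$, properness of $\phi$ is a genuine geometric statement relating the stable base locus to the fibers, and the valuative-criterion argument requires a delicate algebraic-to-geometric translation between the condition $y\in D_+(s)\subset Y$ and the condition that the center of the DVR on $X$ lies outside $\mbox{sBL}$. A secondary challenge is to show that exceptional curves $E'\subset U$ with $(E'\cdot\Delta)<0$ are not contracted by $\phi$, which requires the image of $I_{Nn_0}$ in $H^0(E',\mathcal O_X(-Nn_0\Delta)|_{E'})$ to be at least two-dimensional for some $N$ --- a consequence of Riemann--Roch applied to the positive-degree restricted line bundle combined with the base locus stabilization.
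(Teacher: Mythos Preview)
Your proposal has a genuine gap in the properness argument, which you correctly flag as the main obstacle but do not actually resolve. From $y\in D_+(s)$ you correctly deduce $\nu(s)=\nu(I_{n_0})$ (indeed $\nu(I_{mn_0})=m\nu(s)$ for all $m$, since every $f/s^m\in\mathcal O_\nu$). If the center $q$ of $\nu$ on $X$ already lies in $U$, then $I_{n_0}$ generates $\mathcal O_X(-n_0\Delta)$ at $q$ and your implication follows: $\nu(s/g)=0$ for a local generator $g$, so $s/g$ is a unit and $q\in V_s$. But you have not ruled out $q\in\mbox{sBL}(-n_0\Delta)$. In that case \emph{no} element of $I_{n_0}$ generates at $q$, and the condition $\nu(I_{mn_0})=m\nu(s)$ for all $m$ does not by itself yield a contradiction. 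This missing implication --- that a valuation with a center on $\mbox{Proj}(R[\mathcal I])$ cannot have its $X$-center in the stable base locus --- is precisely Theorem~\ref{Theorem5} of the paper, and its proof is substantial: one invokes the analytic-spread trichotomy (Theorem~\ref{Theorem2}) to find an exceptional $E$ with $(E\cdot\Delta)<0$ and $P_E\subset Q$, constructs a rank-two composite valuation $\omega=v_E\circ\mu$ with center $\alpha$, and then uses Riemann--Roch on $E$ together with the uniform $H^1$ bound of \cite{CNg} to show $\gamma_\omega(\mathcal I)$ is never attained, contradicting Proposition~\ref{Prop6}. None of this appears in your sketch.

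There are secondary difficulties as well. The valuative criterion in \cite[II.4.7]{H} requires $\phi$ to be of finite type with Noetherian target, neither of which you know in advance, so ``$Y$ is of finite type because $\phi$ is proper surjective from a finite-type scheme'' is circular. The paper avoids this by working in the opposite direction: it builds the finite-type scheme $N$ (the normalization of $Y_1\setminus V_1$, where $Y_n=\mbox{Proj}(R[t^nI_n])$) directly, shows $Y_n\setminus V_n=N$ for $n\gg0$, embeds $N$ openly in $\mbox{Proj}(R[\mathcal I])$ (Theorem~\ref{Theorem4}), and only then uses Theorem~\ref{Theorem5} to conclude the embedding is onto. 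Finally, your assumption that every $y\in Y$ lies in $D_+(s)$ for some $s$ of degree~$n_0$ is not automatic for a non-Noetherian graded ring. Your disjointness argument and the contraction of the $(E\cdot\Delta)=0$ curves are correct and agree with Lemma~\ref{Lemma2} and the discussion around~(\ref{eq14}).
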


As a corollary of our explicit description of $\mbox{Proj}(R[\mathcal I])$, we obtain a new proof of the following  theorem of Russo.

\begin{Theorem}\label{Theorem11}(F. Russo, Theorem 4 \cite{Ru})  Let $R$ be a two dimensional normal excellent local ring and $\mathcal I$ be a $\QQ$-divisorial filtration in $R$. Then $\mbox{Proj}(R[\mathcal I])$ is a normal, finite type $R$-scheme, and thus is  Noetherian. $R[\mathcal I]$ is a finitely generated $R$-algebra if and only if $\mbox{Proj}(R[\mathcal I])$ is a proper $R$-scheme. 
\end{Theorem}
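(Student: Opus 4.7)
The plan is to derive the first assertion directly from Theorem \ref{Theorem6}, which already exhibits $\mbox{Proj}(R[\mathcal I])$ as a finite type, normal, integral $R$-scheme (hence Noetherian), and then to use the explicit description in that theorem to reduce properness of $\tau:\mbox{Proj}(R[\mathcal I])\rightarrow\mbox{Spec}(R)$ to the statement $\mbox{sBL}(-n_0\Delta)=\emptyset$, from which finite generation of $R[\mathcal I]$ will follow by classical semi-ample techniques. The forward implication of the equivalence is standard: if $R[\mathcal I]$ is a finitely generated $R$-algebra then, after replacing by a Veronese generated in degree one, $\mbox{Proj}(R[\mathcal I])$ embeds as a closed subscheme of some $\PP^N_R$, and so $\tau$ is proper.

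For the converse, I will suppose $\tau$ is proper and put $U:=X\setminus\mbox{sBL}(-n_0\Delta)$. Theorem \ref{Theorem6} provides a proper birational morphism $\phi:U\rightarrow\mbox{Proj}(R[\mathcal I])$ over $\mbox{Spec}(R)$, so the composition $\tau\circ\phi=\pi|_U$ is proper. Because $\pi$ is projective, hence separated, cancellation of proper morphisms forces the open immersion $U\hookrightarrow X$ to be proper, and therefore closed. Since $X$ is integral and $U$ is nonempty, $U=X$; in other words, $\mbox{sBL}(-n_0\Delta)=\emptyset$. By the defining property of $n_0$, every $\mathcal L^m$ with $\mathcal L:=\mathcal O_X(-n_0\Delta)$ is then generated by its global sections.

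To finish I need to upgrade finite generation of the Veronese $R[\mathcal I]^{(n_0)}$ to finite generation of $R[\mathcal I]$. Since $n_0\Delta$ is integral, $I_{mn_0+i}=\Gamma(X,\mathcal O_X(-\lceil i\Delta\rceil)\otimes\mathcal L^m)$ for $0\le i<n_0$ and $m\ge 0$. The base-point-free linear system $|\mathcal L|$ defines a morphism $\psi:X\rightarrow Y\subset\PP^N_R$ onto its scheme-theoretic image, a projective $R$-scheme with $\mathcal O_Y(1)$ ample and $\psi^*\mathcal O_Y(1)\cong\mathcal L$. The projection formula then gives $\Gamma(X,\mathcal F\otimes\mathcal L^m)=\Gamma(Y,\psi_*\mathcal F\otimes\mathcal O_Y(m))$ for any coherent sheaf $\mathcal F$ on $X$. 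Serre's finite generation theorem for ample line bundles, applied with $\mathcal F=\mathcal O_X$, yields that $R[\mathcal I]^{(n_0)}=\bigoplus_m\Gamma(X,\mathcal L^m)$ is a finitely generated $R$-algebra; applied with $\mathcal F=\mathcal O_X(-\lceil i\Delta\rceil)$ for $i=1,\ldots,n_0-1$, it yields that each interstitial piece $\bigoplus_m I_{mn_0+i}$ is a finitely generated module over $R[\mathcal I]^{(n_0)}$. Summing over $i$ gives finite generation of $R[\mathcal I]$.

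The hardest step will be this last bridge from the Veronese to the full Rees algebra; it is precisely where the integrality of $n_0\Delta$, built into the definition of $n_0$, is used essentially, for this is what lets each $I_{mn_0+i}$ be written as the sections of a fixed coherent sheaf twisted by $\mathcal L^m$, making Serre's theorem applicable through the contraction $\psi$.
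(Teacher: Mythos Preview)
Your proof is correct and follows essentially the same route as the paper. The paper's proof simply cites Theorem~\ref{Theorem6} for the first assertion and then Theorem~\ref{Theorem6} together with Lemma~\ref{Lemma11} for the second; your cancellation argument (proper $\tau\circ\phi$, separated $\pi$, hence $U\hookrightarrow X$ proper and closed, so $\mbox{sBL}(-n_0\Delta)=\emptyset$) is exactly the implicit bridge between those two results, and your semi-ample/projection-formula argument reproduces the forward direction of Lemma~\ref{Lemma11} rather than citing it.
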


\begin{proof} The first statement,  on the Noetherianity of $\mbox{Proj}(R[\mathcal I])$, follows immediately from Theorem \ref{Theorem6}. The second statement, giving a condition for  finite generation of $R[\mathcal I]$, follows from Theorem \ref{Theorem6} and Lemma \ref{Lemma11}.
\end{proof}

The proof in \cite{Ru} makes ingenious use of a relative form of a Theorem of Zariski in \cite{Z3} about divisors on a projective surface. In \cite{Ru}, it is proven that if $D$ is an effective divisor on $X$, then $\Gamma(X\setminus \mbox{Supp}(D),\mathcal O_X)$ is a finitely generated $R$-algebra. 
We give a brief sketch how this is used in \cite{Ru} to prove Theorem \ref{Theorem11}. After replacing $\Delta$ with a positive multiple of $\Delta$, we may assume that $\Delta$ is an integral divisor. 
For $H\in R[\mathcal I]$ homogeneous of positive degree, there is a natural morphism $X_H\rightarrow \mbox{Spec}(R[\mathcal I]_{(H)})$, where $X_H=\{q\in X\mid H(q)\ne 0\}$ (regarding $H$ as an element of $\Gamma(X,\mathcal O_X(-n\Delta))$). 
Now the union $\cup X_H$ over such $H$ is $X\setminus \mbox{sBL}(-\Delta)$, and  we obtain the  ``Grothendieck map'' $r:X\setminus \mbox{sBL}(-\Delta)\rightarrow \mbox{Proj}(R[\mathcal I])$. Since $X\setminus \mbox{Supp}(D)$ is Noetherian and quasi compact and since by the local Zariski theorem each $\Gamma(X_H,\mathcal O_X)$ is a finite type $R$-algebra we deduce that $r$ is a proper birational morphism, so it is in particular surjective, and that $\mbox{Proj}(R[\mathcal I])$ is a finite type $R$-scheme. 

Theorem \ref{Theorem4}, which is an essential part of the proof of  Theorem \ref{Theorem6}, could be deduced starting from assuming the conclusions of Theorem \ref{Theorem11}, instead of giving an independent proof of Theorem \ref{Theorem11} as we do in this paper. This would lead to a slight  simplification of the proof of Theorem \ref{Theorem4}.

Combining Theorem \ref{Theorem2}  and Theorem \ref{Theorem6}, we obtain the following corollary, giving a much stronger formulation of Theorem \ref{Theorem2}.

\begin{Corollary}\label{Cor7} The reduced exceptional fiber 
$$
\tau^{-1}(m_R)=\mbox{Proj}(R[\mathcal I]/\sqrt{m_RR[\mathcal I]})\subset Z(\mathcal I)
$$
is a quasi-projective $R/m_R$-scheme, which 
has the following forms, in the three cases of analytic spread $\ell(\mathcal I)$.
\begin{enumerate}
\item[1)] If $\ell(\mathcal I)=0$, then $\tau^{-1}(m_R)=\emptyset$.
\item[2)] If $\ell(\mathcal I)=1$, then $\tau^{1}(m_R)$ is a point which is finite over $R/m_R$.
\item[3)] If $\ell(\mathcal I)=2$, then $\tau^{-1}(m_R)$ is an open subset of a reduced projective $R/m_R$-scheme of pure dimension one (all  irreducible components have dimension one).
\end{enumerate}

\end{Corollary}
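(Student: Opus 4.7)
The plan is to combine Theorem \ref{Theorem2}, which identifies $\sqrt{m_RR[\mathcal I]}$ in each case, with Theorem \ref{Theorem6}, which guarantees that $Z(\mathcal I)=\mbox{Proj}(R[\mathcal I])$ is a finite type integral $R$-scheme. The reduced fiber $\tau^{-1}(m_R)$ equals $\mbox{Proj}(R[\mathcal I]/\sqrt{m_RR[\mathcal I]})$, and, being the fiber of a finite type morphism, it is automatically a finite type, hence quasi-projective, $R/m_R$-scheme. The remaining content of the corollary is then a case-by-case inspection.

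In Case 1, Theorem \ref{Theorem2}(1) gives $\sqrt{m_RR[\mathcal I]}=m_R\oplus R[\mathcal I]_+$, which contains the irrelevant ideal, so the $\mbox{Proj}$ is empty. In Case 2, Theorem \ref{Theorem2}(2) says $R[\mathcal I]$ is a finitely generated $R$-algebra and $\sqrt{m_RR[\mathcal I]}=P_E$ with $\dim R[\mathcal I]/P_E=1$. Since $P_E\supset m_RR[\mathcal I]$ and $P_E$ is a proper prime, $P_E\cap R=m_R$, so the ring $A:=R[\mathcal I]/P_E$ is a finitely generated graded integral $R/m_R$-algebra of Krull dimension one. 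Consequently $\mbox{Proj}(A)$ is a reduced, irreducible, zero-dimensional projective $R/m_R$-scheme, which is a single closed point finite over $R/m_R$.

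In Case 3, Theorem \ref{Theorem2}(3) writes $\sqrt{m_RR[\mathcal I]}=\cap_iP_{E_i}$ where each $R[\mathcal I]/P_{E_i}$ has Krull dimension two. Thus $B:=R[\mathcal I]/\sqrt{m_RR[\mathcal I]}$ is reduced with finitely many minimal primes, each giving a two-dimensional quotient, so $\mbox{Proj}(B)$ is a reduced $R/m_R$-scheme all of whose irreducible components have dimension one. To realize $\tau^{-1}(m_R)$ as an open subscheme of a projective $R/m_R$-scheme, I would use that $Z(\mathcal I)$ is quasi-projective over $R$ (by Theorem \ref{Theorem6}) to embed it as an open subscheme of a projective $R$-scheme $\overline{Z}$; then $\tau^{-1}(m_R)$ is open in $\overline{Z}_{m_R}$, and its reduced scheme-theoretic closure in $\overline{Z}_{m_R}$ is a reduced projective $R/m_R$-scheme of pure dimension one containing $\tau^{-1}(m_R)$ as an open subscheme.

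Almost all of the substance is already packaged inside Theorems \ref{Theorem2} and \ref{Theorem6}, so the corollary is essentially a bookkeeping exercise: translating the algebraic description of $\sqrt{m_RR[\mathcal I]}$ into geometric statements about the $\mbox{Proj}$. The only mildly delicate step is the compactification argument in Case 3, which however is routine once quasi-projectivity of $Z(\mathcal I)$ over $R$ is in hand.
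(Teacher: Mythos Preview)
Your proposal is correct and follows the same route the paper intends: the corollary is stated immediately after Theorems \ref{Theorem2} and \ref{Theorem6} with the remark that it is obtained by combining them, and your case-by-case translation is exactly that combination.

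One small point to tighten: in your opening paragraph you write that the fiber, ``being the fiber of a finite type morphism, is automatically a finite type, hence quasi-projective, $R/m_R$-scheme.'' Finite type over a field does not by itself imply quasi-projective. What actually gives quasi-projectivity here is the explicit description in Theorem \ref{Theorem6} (and its proof): $Z(\mathcal I)$ is identified with $Y_n\setminus V_n$ for $n\gg 0$, an open subscheme of the projective $R$-scheme $Y_n=\mbox{Proj}(R[t^nI_n])$, so $\tau^{-1}(m_R)$ sits as a locally closed subscheme of the projective $R/m_R$-scheme $\pi_n^{-1}(m_R)$. You in fact use this very observation later in Case~3 for the compactification, so you already have the right input; just invoke it up front rather than the invalid implication. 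With that adjustment your argument is complete and matches the paper.
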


We will say that a valuation $v$ of $K$ dominates a local ring $(A,m_A)$ contained in $K$ if the valuation ring $\mathcal O_v$ of $v$ dominates $A$; that is, 
 $A\subset \mathcal O_v$ and $m_v\cap \mathcal A=m_A$. If $X$ is an  integral scheme such that $K=\mathcal O_{X,\eta}$ where $\eta$ is the generic point of $X$, and $v$ is a valuation of $K$, we will say that a point $\alpha\in X$ is a center of $v$ on $X$ if $v$ dominates $\mathcal O_{X,\alpha}$.

The following invariant, which is defined and discussed in Section \ref{SecInv}, plays an essential role in the proof of Theorem \ref{Theorem6}.
Let $v$ a valuation of $K$ which is nonnegative on $R$. 
We define
$$
\gamma_v(\mathcal I)=\inf\{\frac{v(I_n)}{n}\}
$$
where the infimum is over all positive $n$.

We have that  if $\frac{v(I_m)}{m}=\gamma_v(\mathcal I)$ for some $m$, then
$$
\frac{v(I_{mn})}{mn}=\gamma_v(\mathcal I)\mbox{ for all $n>0$.}
$$

We obtain the following theorem, which is proven at the end of Subsection \ref{SubProof}.
\begin{Theorem}\label{Theorem7} Let $n_0$ be the smallest positive integer such that $-n_0\Delta$ is a $\ZZ$-divisor and $\mbox{BL}(\Gamma(X,\mathcal O_X(-nn_0\Delta)))=\mbox{sBL}(-n_0\Delta)$ for all $n\ge 1$.

Suppose that $v$ is a valuation of $K$ which is nonnegative on $R$. Then the following  are equivalent:
\begin{enumerate}
\item[1)] $v$ has a center on $Z(\mathcal I)=\mbox{Proj}(R[\mathcal I])$.
\item[2)] The center of $v$ on $X$ is in $X\setminus\mbox{sBL}(-n_0\Delta)$.
\item[3)] There exists a positive integer $m$ such that $\frac{v(I_m)}{m}=\gamma_v(\mathcal I)$.
\end{enumerate}
\end{Theorem}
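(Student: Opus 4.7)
My plan is to decouple the theorem into two separate equivalences: $(1) \Leftrightarrow (3)$, which I would prove directly from the $\mbox{Proj}$ construction, and $(1) \Leftrightarrow (2)$, which I would derive from Theorem \ref{Theorem6}. A key preliminary observation, coming from $I_a I_b \subset I_{a+b}$ and the additivity of $v$ on products, is that $a_n := v(I_n)$ is subadditive, so Fekete's lemma gives
$$\gamma_v(\mathcal I) \;=\; \lim_{n \to \infty} \frac{v(I_n)}{n} \;=\; \inf_{n \ge 1} \frac{v(I_n)}{n},$$
and the same limit holds along any subsequence $n = dm$ with $d \to \infty$. Also, $v$ automatically has a (unique) center on $X$, since $\pi$ is proper and $v$ dominates $R_{m_v \cap R}$.

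For $(3) \Rightarrow (1)$, given $m$ with $v(I_m)/m = \gamma_v$, I would pick $H \in I_m$ attaining $v(H) = v(I_m) = m\gamma_v$ (such $H$ exists because $I_m$ is a finitely generated ideal of $R$ and $v\ge 0$ on $R$, so the minimum is realized on any generating set), viewed as a homogeneous element of $R[\mathcal I]_m$. For any $G \in I_{dm}$, $v(G) \ge v(I_{dm}) \ge dm\gamma_v = d v(H)$, so $v(G/H^d) \ge 0$. Hence $R[\mathcal I]_{(H)} \subset \mathcal O_v$, and the prime $\mathfrak q := m_v \cap R[\mathcal I]_{(H)}$ defines a center of $v$ on $D_+(H) \subset Z(\mathcal I)$. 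Conversely, for $(1) \Rightarrow (3)$, a center of $v$ on $Z(\mathcal I)$ lies in some $D_+(H)$ with $H$ homogeneous of degree $m>0$, giving $R[\mathcal I]_{(H)} \subset \mathcal O_v$ and hence $v(I_{dm})/(dm) \ge v(H)/m$ for all $d \ge 1$. Passing to the limit in $d$ gives $\gamma_v \ge v(H)/m$, and the sandwich $\gamma_v \le v(I_m)/m \le v(H)/m \le \gamma_v$ forces equality, yielding (3) with this $m$.

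For $(1) \Leftrightarrow (2)$, I would use the proper birational morphism $r: X \setminus \mbox{sBL}(-n_0\Delta) \to Z(\mathcal I)$ furnished by Theorem \ref{Theorem6}. For $(2) \Rightarrow (1)$: if the center $\alpha$ of $v$ on $X$ lies in $X \setminus \mbox{sBL}(-n_0\Delta)$, then $r(\alpha) \in Z(\mathcal I)$ is a center of $v$, via the composition of local inclusions $\mathcal O_{Z(\mathcal I), r(\alpha)} \hookrightarrow \mathcal O_{X, \alpha} \hookrightarrow \mathcal O_v$. For $(1) \Rightarrow (2)$: apply the valuative criterion of properness to $r$. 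A center $\beta$ of $v$ on $Z(\mathcal I)$ together with the generic point $\mbox{Spec}(K) \to X \setminus \mbox{sBL}(-n_0\Delta)$ (compatible because $r$ is birational) lifts to $\mbox{Spec}(\mathcal O_v) \to X \setminus \mbox{sBL}(-n_0\Delta)$, whose closed-point image $\alpha'$ is dominated by $\mathcal O_v$. Since $\pi$ is projective, $X$ is separated over $\mbox{Spec}(R)$, so $v$ has at most one center on $X$; thus $\alpha' = \alpha \in X \setminus \mbox{sBL}(-n_0\Delta)$.

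The main obstacle, I expect, is the bookkeeping in $(1) \Leftrightarrow (3)$: the domination condition $R[\mathcal I]_{(H)} \subset \mathcal O_v$ only controls $v(I_n)/n$ along the subsequence of multiples of $\deg H$, so one must invoke Fekete's lemma to bridge this subsequence and the global infimum $\gamma_v$. Everything else is a routine translation between scheme-theoretic centers and homogeneous-element inequalities, plus the valuative criterion applied to the concrete proper map $r$ from Theorem \ref{Theorem6}.
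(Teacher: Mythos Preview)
Your proof is correct and follows essentially the same route as the paper. Your argument for $(1)\Leftrightarrow(3)$ is precisely the content of Proposition~\ref{Prop6} (with Fekete's lemma making explicit what the paper handles by the inequality $\frac{v(I_{mn})}{mn}\le\frac{v(I_m)}{m}$), and for $(1)\Leftrightarrow(2)$ you invoke Theorem~\ref{Theorem6} and the valuative criterion for the proper map $r$, whereas the paper cites Theorem~\ref{Theorem4} and Theorem~\ref{Theorem5} directly---since Theorem~\ref{Theorem6} is itself deduced from Theorems~\ref{Theorem4} and~\ref{Theorem5}, this is only a difference in packaging.
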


If $\mathcal J$ is a graded filtration of a two dimensional normal excellent ring $R$, it may be that $\mbox{Proj}(R[\mathcal J])$ is not Noetherian. Here is a simple example. Let $R=k[[x,y]]$ be a power series ring in two variables over a field $k$, and let $\mathcal J=\{J_n\}$ where $J_n=(x,y^n)$. Writing $R[\mathcal J]=\sum_{n\ge 0}J_nt^n$, where $t$ is an indeterminate, we see that 
$$
R[\mathcal J]_{(ty)}=R[\frac{x}{y^n}\mid n\in \ZZ_{>0}], 
$$
which is not Noetherian. Since $\mbox{Spec}(R[\mathcal J]_{(ty)})$ is an affine open subset of $\mbox{Proj}(R[\mathcal J])$, $\mbox{Proj}(R[\mathcal J])$ is not Noetherian. 

\section{Integral closure}
Integrally closed ideals and 
the integral closure $\overline I$ of an ideal $I$ are defined in Chapter 1 of \cite{HS}. We will need the following well known lemma. 

\begin{Lemma}\label{LemmaC} Let $R$ be a normal excellent local ring with quotient field $K$. Then
\begin{enumerate}
\item[1)] Let $I\subset R$ be an ideal and $X$ be the normalization of the blowup of $I$. Then the natural morphism $\beta:X\rightarrow \mbox{Spec}(R)$ is a birational projective morphism since $R$ is universally Nagata. Let $\pi:Y\rightarrow \mbox{Spec}(R)$ be a birational projective morphism which factors through $X$. Then $\Gamma(Y,I\mathcal O_Y)=\overline I$ is the integral closure of $I$.
\item[2)] Let $W\rightarrow \mbox{Spec}(R)$ be a birational projective morphism such that $W$ is normal and $D$ is an effective Cartier divisor on $W$. Then $J=\Gamma(W,\mathcal O_W(-D))\subset R$ is integrally closed.
\end{enumerate}
\end{Lemma}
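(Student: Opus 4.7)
The plan is to reduce both parts to the classical fact (Proposition~5.2.4 of \cite{HS}) that if $X$ denotes the normalized blowup of an ideal $I$ in $\mbox{Spec}(R)$, then $I\mathcal O_X$ is invertible and $\Gamma(X,I\mathcal O_X)=\overline{I}$. Granted this, for part 1) it suffices to compare global sections on $Y$ and on $X$ via the factoring $\pi=\beta\circ f$ with $f:Y\to X$.

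The central step is the pushforward identity $f_*\mathcal O_Y=\mathcal O_X$. Since $\pi$ and $\beta$ are both projective, $f$ is proper, so $f_*\mathcal O_Y$ is a coherent $\mathcal O_X$-algebra and hence a finite $\mathcal O_X$-module. Birationality of $f$ together with integrality of $Y$ embed $f_*\mathcal O_Y$ into the function field of $X$, and normality of $X$ then forces $f_*\mathcal O_Y=\mathcal O_X$. Since $I\mathcal O_X$ is invertible, the projection formula yields
\[
f_*(I\mathcal O_Y)=f_*(f^*(I\mathcal O_X))=I\mathcal O_X\otimes f_*\mathcal O_Y=I\mathcal O_X,
\]
and taking global sections gives $\Gamma(Y,I\mathcal O_Y)=\Gamma(X,I\mathcal O_X)=\overline{I}$.

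For part 2), I would choose a birational projective morphism $Y\to\mbox{Spec}(R)$ with $Y$ normal that factors through both $W$ and through the normalized blowup $X_J$ of $J$; concretely, $Y$ may be taken as the normalization of the closure of the diagonal inside $W\times_R X_J$, which is projective over $R$ by excellence. Let $g:Y\to W$ denote the induced map. The inclusion $J\subseteq\Gamma(W,\mathcal O_W(-D))$ forces $J\mathcal O_W\subseteq\mathcal O_W(-D)$, and pullback then yields $J\mathcal O_Y\subseteq\mathcal O_Y(-g^*D)$. Applying part 1) on the left, and the identity $g_*\mathcal O_Y=\mathcal O_W$ (by the same argument as above, using normality of $W$) together with the projection formula on the right, I obtain
\[
\overline{J}=\Gamma(Y,J\mathcal O_Y)\subseteq\Gamma(Y,\mathcal O_Y(-g^*D))=\Gamma(W,\mathcal O_W(-D))=J,
\]
whence $J=\overline{J}$.

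The only delicate technical point is the pushforward identity $f_*\mathcal O_Y=\mathcal O_X$ and its analogue for $g$; no serious obstacle is anticipated, since it combines coherence of direct images under proper morphisms of Noetherian schemes with normality of the target, and excellence of $R$ is used precisely to ensure that normalizations of blowups remain projective over $\mbox{Spec}(R)$.
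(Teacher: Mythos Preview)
Your proof of part 1) is essentially identical to the paper's: both factor $\pi=\beta\circ f$, establish $f_*\mathcal O_Y=\mathcal O_X$ from coherence of the pushforward and normality of $X$, and then apply the projection formula (the paper cites Lazarsfeld's Definition 9.6.2 and Proposition 9.6.6 for the identity $\Gamma(X,I\mathcal O_X)=\overline I$, where you cite \cite{HS}).

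For part 2) the approaches differ. The paper simply invokes Proposition 9.6.11 of \cite{LA} as a black box. Your argument is more self-contained: you dominate both $W$ and the normalized blowup $X_J$ of $J$ by a common normal projective $Y$, and then sandwich $\overline J=\Gamma(Y,J\mathcal O_Y)\subseteq\Gamma(Y,\mathcal O_Y(-g^*D))=J$ using part 1) on the left and the projection formula with $g_*\mathcal O_Y=\mathcal O_W$ on the right. This is a correct and pleasant reduction of 2) to 1); it makes the dependence on part 1) explicit and avoids an external citation, at the cost of constructing the auxiliary $Y$ (which is unproblematic by excellence, as you note). The paper's route is shorter but less transparent.
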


\begin{proof} We first prove 1). Write $\pi=\beta\alpha$ where $\alpha:Y\rightarrow X$. Now $\beta$ is separated since it is proper by Theorem II.4.9 \cite{H}, so $\alpha$ is projective by Proposition 5.5.5 (v) \cite{EGAII} or Exercise II.4.8 (e) \cite{H}.
We have that $\mathcal O_X\subset \alpha_*\mathcal O_Y\subset K$ and $\alpha_*\mathcal O_Y$ is a sheaf of $\mathcal O_X$-algebras which is a  coherent $\mathcal O_X$-module by Corollary II.5.20 \cite{H} since $\alpha$ is a projective morphism so $\alpha_*\mathcal O_Y=\mathcal O_X$ since $X$ is normal.
Thus
$$
\alpha_*(I\mathcal O_Y)\cong (I\mathcal O_X)\otimes_{\mathcal O_X}\alpha_*\mathcal O_Y\cong I\mathcal O_X
$$
by the projection formula (Exercise II.5.1 (d) \cite{H}). Thus 
$$
\Gamma(Y,I\mathcal O_Y)=\Gamma(X,I\mathcal O_X)=\overline I
$$
by Definition 9.6.2 \cite{LA} and Proposition 9.6.6 \cite{LA}.

2) is Proposition 9.6.11 \cite{LA}.
\end{proof}

\section{Rees algebras of graded filtrations and their Proj}

Let $S=\oplus_{n\ge 0}S_n$ be an $\NN$-graded ring (which is not assumed to be Noetherian). $S_+$ will denote the irrevelant ideal, $S_+=\oplus_{n>0}S_n$.

We will follow the standard conventions on $\mbox{Proj}(S)$ (c.f. Chapter 2, Section 2 of \cite{H}) so that for $H\in S$ homogeneous, $S_{(H)}$ denotes the ring of elements of degree 0 in the localization $S_H$ and for $P\in \mbox{Proj}(S)$, $S_{(P)}$ denotes the elements of degree 0 in $T^{-1}S$, where $T$ is the multiplicative set of homogeneous elements in $S\setminus P$.

For $d>0$ the $d$-th Veronese algebra of $S$ is $S^{(d)}=\oplus_{n\ge 0}S_{dn}$.

We point out the following basic fact which we will use repeatedly throughout this paper. 

\begin{Proposition}\label{PropVer}(Proposition II.4.7 (i) \cite{EGAII}) There is a natural isomorphism $\mbox{Proj}(S^{(d)})\cong \mbox{Proj}(S)$ for all $d>0$.
\end{Proposition}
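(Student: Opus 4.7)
The plan is to construct the isomorphism by comparing distinguished affine opens and then gluing. First, for each homogeneous element $g\in S^{(d)}_+$ (so $\deg g$ is a positive multiple of $d$), I would verify the equality of subrings of $S_g$
$$
S_{(g)}=S^{(d)}_{(g)}.
$$
Indeed, any element of $S_{(g)}$ has the form $a/g^k$ with $\deg a=k\deg g$, and since $d\mid \deg g$ we also have $d\mid \deg a$, so $a\in S^{(d)}$; the reverse inclusion is immediate. Hence the distinguished opens $D_+(g)\subset \mbox{Proj}(S)$ and $D_+(g)\subset \mbox{Proj}(S^{(d)})$ have canonically isomorphic coordinate rings.

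Next, I would check that these $D_+(g)$ cover both Proj schemes. For $\mbox{Proj}(S^{(d)})$ this is part of the standard construction. For $\mbox{Proj}(S)$, a homogeneous prime $P$ not containing $S_+$ must avoid some $f\in S_n$ with $n>0$; then $f^d\in S^{(d)}_{nd}$ is also outside $P$, so $P\in D_+(f^d)$. Therefore the family $\{D_+(g): g\in S^{(d)}_+ \text{ homogeneous}\}$ covers $\mbox{Proj}(S)$ as well.

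Finally, I would glue the local isomorphisms. On the overlap $D_+(g_1)\cap D_+(g_2)=D_+(g_1g_2)$ the required compatibility reduces to the tautology $S_{(g_1g_2)}=S^{(d)}_{(g_1g_2)}$, so there is no cocycle obstruction. This yields a morphism of schemes $\mbox{Proj}(S)\to \mbox{Proj}(S^{(d)})$ which is an isomorphism because it is one on an open cover, and naturality in $S$ is automatic from the functoriality of the $(\cdot)_{(g)}$ construction. The only substantive point is the covering statement for $\mbox{Proj}(S)$, but it is immediate from the definition of the irrelevant ideal together with the observation that $d$-th powers of positive-degree elements of $S$ lie in $S^{(d)}_+$.
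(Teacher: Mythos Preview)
Your argument is correct and is the standard proof of this classical fact. Note that the paper does not supply its own proof of this proposition; it is quoted verbatim as a citation of EGA~II, Proposition~II.4.7~(i), and used as a black box throughout. What you have written is essentially the argument one finds in EGA: identify $S_{(g)}=S^{(d)}_{(g)}$ for homogeneous $g\in S^{(d)}_+$, observe that such $D_+(g)$ cover both Proj schemes (using that $f^d\in S^{(d)}_+$ for any homogeneous $f\in S_+$), and glue. There is nothing to compare against in the paper itself.
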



\begin{Lemma}\label{Lemma3}
Suppose that $P_1,P_2\in Z:=\mbox{Proj}(S)$ are distinct homogeneous prime ideals. Then $S_{(P_1)}\ne S_{(P_2)}$.
\end{Lemma}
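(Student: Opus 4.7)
The plan is to reduce to an affine chart of $\mbox{Proj}(S)$ and then invoke the standard fact that distinct primes of a commutative ring give rise to distinct localizations (understood as localizations of a common ring, so that the comparison makes sense inside a joint localization).

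First I would establish a two-prime homogeneous prime avoidance statement. Since $P_1,P_2\in\mbox{Proj}(S)$, neither contains $S_+$, so one can pick homogeneous elements $a_i\in S_+\setminus P_i$ of degrees $d_i>0$. If $a_1\notin P_2$ I would set $H=a_1$; symmetrically if $a_2\notin P_1$ set $H=a_2$. Otherwise $a_1\in P_2$ and $a_2\in P_1$, and I would take
$$
H=a_1^{d_2}+a_2^{d_1},
$$
which is homogeneous of degree $d_1d_2$ and, by the usual sum-of-powers argument (using $a_i^k\notin P_i$ and $a_j^k\in P_i$ for $j\ne i$), lies outside both $P_1$ and $P_2$.

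Having produced such an $H$, both $P_1$ and $P_2$ belong to the distinguished affine open $D_+(H)\cong\mbox{Spec}(S_{(H)})$. Under this identification they correspond to distinct prime ideals $Q_1\ne Q_2$ of $S_{(H)}$, namely
$$
Q_i=\{a/H^n\mid n\ge 0,\ a\in P_i\text{ homogeneous of degree }n\deg H\},
$$
and the standard identification of stalks on $\mbox{Proj}$ with stalks on affine opens gives $S_{(P_i)}=(S_{(H)})_{Q_i}$.

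Finally, distinct primes $Q_1\ne Q_2$ of $S_{(H)}$ yield distinct localizations: after possibly swapping indices, pick $a\in Q_1\setminus Q_2$. Since $a\notin Q_2$, its image is a unit in $(S_{(H)})_{Q_2}$, so $1/a\in S_{(P_2)}$; but in the local ring $(S_{(H)})_{Q_1}$ the element $a$ lies in the maximal ideal and so is not a unit, which forces $1/a\notin S_{(P_1)}$. Thus $S_{(P_1)}\ne S_{(P_2)}$.

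The only mildly subtle point in this plan is the first step, the homogeneous prime avoidance for two primes; the sum-of-powers construction handles it uniformly and requires no auxiliary hypothesis on the graded ring $S$.
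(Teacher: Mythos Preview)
Your argument is correct and follows essentially the same route as the paper's proof: find a homogeneous $H\in S_+$ avoiding both primes, pass to the affine open $D_+(H)\cong\mbox{Spec}(S_{(H)})$, and conclude that distinct points of an affine scheme have distinct stalks. The only difference is that the paper cites Lemma 1.5.10 of Bruns--Herzog for the homogeneous prime-avoidance step and takes the affine conclusion for granted, whereas you spell out both the sum-of-powers construction of $H$ and the unit/non-unit comparison in the localizations.
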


\begin{proof} When thinking of the prime ideal $P_i$ as a point in $Z$, we will denote it by $\alpha_i$,  so that $\mathcal O_{Z,\alpha_i}=S_{(P_i)}$. By Lemma 1.5.10 \cite{BH}, there exists a nonzero homogeneous element $H\in S_+$ such that $H\not\in P_1$ and $H\not\in P_2$. By Proposition II.2.5 \cite{H},
$$
D_+(H)=\mbox{Proj}(S)\setminus V(H)=\{P\in \mbox{Proj}(S)\mid H\in P\}
$$
is open in $Z$ and we have isomorphisms of locally ringed spaces $(D_+(H), \mathcal O_Z\mid D_+(H))\cong \mbox{Spec}(S_{(H)})$.
Since $\alpha_1,\alpha_2\in D_+(H)$ and $D_+(H)$ is affine, we have that $\mathcal O_{Z,\alpha_1}\ne \mathcal O_{Z,\alpha_2}$.
\end{proof}

Let $R$ be a local ring and $\mathcal I=\{I_n\}$ be a graded filtration of ideals in $R$. The Rees Algebra of $\mathcal I$ is
$$
R[\mathcal I]=\oplus_{n\ge 0}I_n.
$$
We will sometimes find it convenient to express $R[\mathcal I]$ as 
$$
R[\mathcal I]=\sum_{n\ge 0}I_nt^n\subset R[t]
$$ 
where $t$ is an indeterminate.

 \section{Valuations, Separatedness and Properness} Separated and proper morphisms are defined and discussed in Section 4 of Chapter II of \cite{H}.

 Let $v$ be a valuation of a field $K$. Associated to $v$ are its valuation ring
 $$
 \mathcal O_v=\{x\in K\mid v(x)\ge 0\},
 $$
  which is a possibly non Noetherian local ring with maximal ideal 
  $$
  m_v=\{x\in K\mid v(x)>0\}.
  $$
  We will follow the relatively standard conventions that 
   $vK$ denotes the value group $\{v(x)\mid x\in K\setminus\{0\}\}$ and $Kv$ denotes the residue field $Kv=\mathcal O_v/m_v$.

 We will say that $v$ dominates a local ring $(A,m_A)$ contained in $K$ if the valuation ring $\mathcal O_v$ dominates $A$; that is, 
 $A\subset \mathcal O_v$ and $m_v\cap \mathcal A=m_A$. If $X$ is an  integral scheme such that $K=\mathcal O_{X,\eta}$ where $\eta$ is the generic point of $X$, and $v$ is a valuation of $K$, we will say that a point $\alpha\in X$ is a center of $v$ on $X$ if $v$ dominates $\mathcal O_{X,\alpha}$.

\begin{Theorem}\label{Theorem1}(Theorem VI.4.5, page 12 \cite{ZS2}) Let $A$ be an integral domain contained in a field $L$ and let $P$ be a prime ideal of $A$. Then there exists a valuation $v$ of $L$ such that  such that $A\subset \mathcal O_v$ and $m_v\cap A=P$.
\end{Theorem}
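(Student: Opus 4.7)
The plan is to realize the desired valuation as arising from a local ring maximal for the domination relation. First I would reduce to the local case by replacing $A$ with $A_P$: it suffices to produce a valuation ring $\mathcal O_v \subset L$ dominating $A_P$, because then $m_v\cap A = m_v\cap A_P\cap A = PA_P\cap A = P$. Let $\Sigma$ be the set of local subrings $(B,m_B)$ of $L$ dominating $A_P$, partially ordered by declaring $(B_1,m_{B_1})\le (B_2,m_{B_2})$ iff $B_1\subseteq B_2$ and $m_{B_2}\cap B_1=m_{B_1}$. Then $A_P\in\Sigma$, and the union $\bigcup B_i$ of any chain in $\Sigma$ is again an element of $\Sigma$ with maximal ideal $\bigcup m_{B_i}$. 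Zorn's lemma produces a maximal element $(C,m_C)\in\Sigma$.

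The heart of the proof is to show that any such maximal $C$ is a valuation ring of $L$; that is, for every $x\in L^\times$, at least one of $x, x^{-1}$ lies in $C$. Fix $x\in L^\times$ and consider the subrings $C[x]$ and $C[x^{-1}]$ of $L$. If $m_CC[x]$ is a proper ideal of $C[x]$, pick a maximal ideal $\mathfrak n\supset m_CC[x]$ of $C[x]$ and let $D:=C[x]_{\mathfrak n}$. The maximal ideal $\mathfrak n D$ of $D$ contracts to a proper ideal of $C$ containing $m_C$, hence to $m_C$ itself, so $D\in\Sigma$ and $C\le D$. Maximality forces $D=C$ and hence $x\in C$. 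The symmetric argument with $x^{-1}$ shows $x^{-1}\in C$ whenever $m_CC[x^{-1}]$ is proper.

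So the main obstacle is to rule out the case in which both $m_CC[x]$ and $m_CC[x^{-1}]$ are improper. Suppose both equal the whole ring; then there exist relations
$$1 = a_0 + a_1 x + \cdots + a_n x^n, \qquad 1 = b_0 + b_1 x^{-1} + \cdots + b_m x^{-m}$$
with all $a_i,b_j\in m_C$, chosen with $n$ minimal among relations of the first form. Assume without loss of generality $n\ge m$. Since $b_0\in m_C$, the element $1-b_0$ is a unit of $C$, so rearranging the second relation gives
$$x^m = (1-b_0)^{-1}(b_1 x^{m-1}+\cdots+b_m),$$
expressing $x^m$ as a $C$-linear combination of $1,x,\ldots,x^{m-1}$ with all coefficients in $m_C$. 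Substituting this (multiplied by $a_n x^{n-m}$) into the first relation and collecting like terms yields a relation $1 = a_0' + a_1' x+\cdots+a_{n-1}'x^{n-1}$ with every $a_i'\in m_C$, contradicting the minimality of $n$. Hence $C$ is a valuation ring of $L$, and the associated valuation $v$ satisfies $\mathcal O_v\supseteq A$ and $m_v\cap A = P$. The rest of the argument is essentially bookkeeping with the domination order; the one genuinely nontrivial step is this relation-elimination, which is where I expect the real work to lie.
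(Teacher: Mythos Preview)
The paper does not prove this statement at all; it simply quotes it as Theorem VI.4.5 of Zariski--Samuel \cite{ZS2} and uses it as a black box. Your argument is correct and is essentially the classical proof given there (and in Atiyah--Macdonald, Matsumura, etc.): localize at $P$, apply Zorn's lemma to the poset of local subrings of $L$ dominating $A_P$, and show a maximal element $C$ is a valuation ring by the degree-reduction trick on the two relations $1=\sum a_ix^i$ and $1=\sum b_jx^{-j}$. So there is nothing to compare---you have reproduced the standard reference proof that the paper is citing.
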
  

As a consequence of Theorem \ref{Theorem1}, we see that if $X$ is an integral $R$-scheme which is birational to $\mbox{Spec}(R)$, and $q\in X$ is a point, then there exists a valuation $v$ of $K$ such that $q$ is a center of $v$ on $X$.

The following lemma is the local analogue of a well known characterization of separatedness and properness for varieties over a field (c.f. \cite{Z1} and Exercise II.4.5 \cite{H}).

\begin{Lemma}\label{LemmaSepProp} Let $R$ be a domain with quotient field $K$.
Suppose that $f:X\rightarrow Y$ is a morphism of integral Noetherian  $R$-schemes which are birational to $\mbox{Spec}(R)$. Then 
\begin{enumerate}
\item[1)] $f$ is separated if and only if whenever  $v$ is a valuation of $K$ which dominates $\mathcal O_{Y,\beta}$ for some  $\beta \in Y$, there exists at most one point $\alpha\in X$ such that $v$ dominates $\mathcal O_{X,\alpha}$.
\item[2)]  $f$ is proper if and only if whenever  $v$ is a valuation of $K$ which dominates $\mathcal O_{Y,\beta}$ for some  $\beta \in Y$,  there is a unique point $\alpha\in X$ such that $v$ dominates $\mathcal O_{X,\alpha}$.
\end{enumerate}
\end{Lemma}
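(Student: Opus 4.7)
The plan is to apply the valuative criteria of separatedness and properness (Theorems II.4.3 and II.4.7 of \cite{H}) and translate them, via a standard dictionary relating valuation rings and local rings of points, into the asserted statements about centers of $K$-valuations.

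The dictionary is this: since $X$ is integral with $\mathcal O_{X,\eta_X}=K$ and is $R$-birational to $\mbox{Spec}(R)$, each $K$-valuation $v$ and each point $\alpha\in X$ dominated by $v$ yield a local map $\mathcal O_{X,\alpha}\hookrightarrow\mathcal O_v\subset K$, hence a unique morphism $s_\alpha:\mbox{Spec}(\mathcal O_v)\to X$ whose generic point maps to $\eta_X$ (inducing the identity on $K$) and whose closed point is $\alpha$. Conversely, every morphism of this shape arises from a unique $\alpha$, because any local $R$-algebra map $\mathcal O_{X,\alpha}\to\mathcal O_v$ compatible with the shared subfield $K$ is forced to be the inclusion of subrings of $K$. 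The same dictionary applies to $Y$.

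For the forward direction of (1), if $v$ dominates $\mathcal O_{Y,\beta}$ and also $\mathcal O_{X,\alpha_i}$ for $i=1,2$, the morphisms $s_{\alpha_1}$ and $s_{\alpha_2}$ complete the same commutative square: $f\circ s_{\alpha_i}$ is the unique morphism $s_\beta:\mbox{Spec}(\mathcal O_v)\to Y$ associated to $\beta$, and both $s_{\alpha_i}$ restrict to the inclusion $\mbox{Spec}(K)\hookrightarrow X$ of the generic point. Separatedness then forces $s_{\alpha_1}=s_{\alpha_2}$, and hence $\alpha_1=\alpha_2$. For the converse, assume the valuative criterion fails, witnessed by a valuation ring $V$ with fraction field $L$ and two distinct lifts $h_1,h_2:\mbox{Spec}(V)\to X$. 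Birationality of $f$ (since both $X,Y$ are birational to $\mbox{Spec}(R)$ we have $f(\eta_X)=\eta_Y$ and $f^{-1}(\eta_Y)=\{\eta_X\}$) reduces us to the case where the generic point of $\mbox{Spec}(V)$ maps to $\eta_X$ under both $h_i$; the alternative, with common image $\gamma\ne\eta_X$, is disposed of by Noetherian induction, since both $h_i$ then factor through the integral closed subscheme $\overline{\{\gamma\}}_{\mathrm{red}}\subset X$ of strictly smaller dimension. Once the generic point lands at $\eta_X$, the restriction $\mathcal O_v:=V\cap K$ is a valuation ring of $K$, and the closed-point images of $h_1,h_2$ give distinct centers of $v$ on $X$ over the common center on $Y$, contradicting the hypothesis. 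Part (2) is obtained by additionally invoking the existence half of the valuative criterion, which via the same dictionary corresponds to the existence (rather than uniqueness) of a center $\alpha\in X$ of $v$ dominating $\beta$.

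The main obstacle is the reduction in the converse of (1) from arbitrary valuation rings with fraction field $L\supsetneq K$ to valuation rings of $K$: handling the case where the generic point of $\mbox{Spec}(V)$ maps to a non-generic point of $X$ is where the birationality of $f$ is essentially used, and the Noetherian induction on the dimension of the closed integral subscheme containing the image must be set up carefully.
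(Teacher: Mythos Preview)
Your forward directions and the reduction when the generic point of $\mbox{Spec}(V)$ lands at $\eta_X$ are fine. The gap is in the ``Noetherian induction'' you invoke when the common image is some $\gamma\ne\eta_X$. Yes, both $h_i$ factor through $Z:=\overline{\{\gamma\}}_{\mathrm{red}}$, but to apply the inductive hypothesis to $f|_Z:Z\to\overline{\{f(\gamma)\}}_{\mathrm{red}}$ you would need to know that every valuation of the function field $\kappa(\gamma)$ of $Z$ with a center on $\overline{\{f(\gamma)\}}$ has at most one (resp.\ a unique) center on $Z$. That is a statement about $\kappa(\gamma)$-valuations, and nothing in your argument explains how it follows from the given hypothesis, which concerns only valuations of $K$. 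Without that bridge the induction does not get off the ground, and the ``main obstacle'' you flag at the end is in fact not resolved.

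The paper handles exactly this point, and it is the substance of the proof. Given the diagram with image $x_1\ne\eta_X$, it chooses a divisorial valuation $v$ of $K$ dominating $\mathcal O_{X,x_1}$, enlarges the residue field of $v$ (via a free join) so that it contains $L$, extends $\mu$ to that residue field, and forms the \emph{composite} valuation $\omega'=v'\circ\mu'$; the restriction $\omega=\omega'|_K$ is then a valuation of $K$ with the property that centers of $\omega$ on $X$ correspond exactly to lifts $\mbox{Spec}(\mathcal O_\mu)\to X$ of the original square. This composite-valuation construction is precisely the mechanism that transports the hypothesis on $K$-valuations down to the closed subscheme $\overline{\{x_1\}}$, and it is what your induction would need in order to work. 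If you try to fill in the inductive step honestly, you will find yourself rebuilding this construction.
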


\begin{proof} The only if directions of 1) and 2) are immediate by the respective criteria of 
 Theorem II.4.3 \cite{H} and Theorem II.4.7 \cite{H}. It remains to prove the if direction, so we assume that the valuative criteria of 1) or 2) hold. 
 We will verify that the respective criterion of Theorem II.4.3 \cite{H} or Theorem II.4.7 \cite{H} holds. 
 Suppose that we are given a valuation $\mu$ whose valuation ring $\mathcal O_{\mu}$ has a quotient field $L$ and a commutative diagram of morphisms
\begin{equation}\label{SepProp}
\begin{array}{ccc}
U=\mbox{Spec}(L)&\rightarrow &X\\
\downarrow&&\downarrow f\\
T=\mbox{Spec}(\mathcal O_{\mu})&\rightarrow &Y.
\end{array}
\end{equation}
Let $y_1\in Y$ be the image of the zero ideal of $L$ in $Y$, and let $y_0$ be the image of the maximal ideal of $T$ in $Y$, so that  $\mathcal O_{\mu}$ dominates $\mathcal O_{\overline{\{y_1\}},y_0}$, and the induced map on quotient fields is the inclusion $\kappa(y_1)\rightarrow L$, where $\kappa(y_1)$ is the field $\mathcal O_{Y,y_1}/m_{y_1}$. The morphism $\mbox{Spec}(L)\rightarrow X$ is determined by a point $x_1\in X$,  such that $f(x_1)=y_1$, and an inclusion of fields $\kappa(x_1)\rightarrow L$.
A lifting $T\rightarrow X$ is then equivalent to giving a point $x_0\in X$ such that $f(x_0)=y_0$, $x_0\in \overline{\{x_1\}}$ and $\mathcal O_{\mu}$ dominates $\mathcal O_{\overline{\{x_1\}},x_0}$.  

Let $A=\mathcal O_{Y,y_0}$ and $P$ be the kernel of the map $A\rightarrow \mathcal O_{\mu}$. If $P=0$ then $y_1$ is the generic point of $Y$, and $x_1$ is the generic point of $X$ and so 
the restriction of $\mu$ to $K$ is a valuation of $K$. The restriction $\mu|K$ has at most one center on $X$ if and only if there exists at most one morphism $T\rightarrow X$ making the diagram (\ref{SepProp}) commute, and $\mu|K$ has a unique center on $X$ if and only if there exists a unique morphism $T\rightarrow X$ making the diagram (\ref{SepProp}) commute. 
We may thus assume that $P\ne 0$. Then $\overline{\{x_1\}}\ne X$ and so 
there  exists a divisorial valuation $v$ of $K$ such that $\mathcal O_v$ dominates $\mathcal O_{X,x_1}$. Let $F$ be the free join of $L$ and $Kv$ over $(A/P)_P=\kappa(y_1)$ (this field $F$ exists as is explained in pages 187 - 188 of \cite{ZS1}). It is explained in Section II.7.12 \cite{EGAII} that there exists a valuation $v'$ which extends $v$ which has the residue field $F$. Let $\mu'$ be an extension of $\mu$ to $F$ and $\omega'$ be the composite valuation $\omega'=v'\circ \mu'$.
Composite valuations and their construction are explained on page 43 \cite{ZS2} and on pages 56 - 57 \cite{RTM}. Let $\omega$ be the restriction of $\omega'$ to $K$. 
Suppose that $\omega$ has a center $x_0$ on $X$. Then $x_0$ is a center of $\omega'$ on $X$. By our construction of $\omega'$, $x_1$ is the center of $v'$ on $X$ and $\mathcal O_{\overline{\{x_1\}},x_0}$ is dominated by $\mu'$. Since $\kappa(x_1)\subset L$ and $\mu'|L=\mu$, we have that $\mathcal O_{\mu}$ dominates  $\mathcal O_{\overline{\{x_1\}},x_0}$. Further, if there exists a point $x_0\in X$ such that $x_0\in \overline{\{x_1\}}$ and $\mathcal O_{\mu}$ dominates $\mathcal O_{\overline{\{x_1\}},x_0}$, then $\omega'$ and hence $\omega$ dominates $\mathcal O_{X,x_0}$.

By the above analysis, we see that  $\omega$ has at most one center on $X$ if and only if there exists at most one morphism $T\rightarrow X$ making the diagram (\ref{SepProp}) commute, and 
$\omega$ has a unique center on $X$ if and only if there is a unique morphism $T\rightarrow X$ which makes the diagram (\ref{SepProp}) commute. Thus Lemma \ref{LemmaSepProp} holds by the criteria of Theorem II.4.3 \cite{H} and Theorem II.4.7 \cite{H}.
\end{proof}

\section{Divisorial Filtrations}\label{SecDiv}
 Let $R$ be a  local domain of dimension $d$ with quotient field $K$ and maximal ideal $m_R$.  Let $\nu$ be a discrete valuation of $K$ with valuation ring $\mathcal O_{\nu}$ and maximal ideal $m_{\nu}$.  Suppose that $R\subset \mathcal O_{\nu}$. Then for $n\in \NN$, define valuation ideals
$$
I(\nu)_n=\{f\in R\mid \nu(f)\ge n\}=m_{\nu}^n\cap R.
$$
 
  A divisorial valuation of $R$ (\cite[Definition 9.3.1]{HS}) is a valuation $\nu$ of $K$ such that if $\mathcal O_{\nu}$ is the valuation ring of $\nu$ with maximal ideal $m_{\nu}$, then $R\subset \mathcal O_{\nu}$ and if $p=m_{\nu}\cap R$ then $\mbox{trdeg}_{\kappa(p)}\kappa(\nu)={\rm ht}(p)-1$, where $\kappa(p)$ is the residue field of $R_{p}$ and $\kappa(\nu)$ is the residue field of $\mathcal O_{\nu}$.   
   
 By \cite[Theorem 9.3.2]{HS}, the valuation ring of every divisorial valuation $\nu$ is Noetherian, hence is a  discrete valuation. 
 Suppose that  $R$ is an excellent local domain. Then a valuation $\nu$ of the quotient field $K$ of $R$ which is nonnegative on $R$ is a divisorial valuation of $R$ if and only if the valuation ring $\mathcal O_{\nu}$  of $\nu$ is essentially of finite type over $R$ (\cite[Lemma 5.1]{CPS1}).
 
 In general, the filtration $\mathcal I(\nu)=\{I(\nu)_n\}$ is not Noetherian; that is, the graded $R$-algebra $\sum_{n\ge 0}I(\nu)_nt^n$ is not a finitely generated $R$-algebra. 

An integral  divisorial filtration of $R$ (which we will refer to as a divisorial filtration in this paper) is a filtration $\mathcal I=\{I_m\}$ such that  there exist divisorial valuations $\nu_1,\ldots,\nu_s$ and $a_1,\ldots,a_s\in \ZZ_{\ge 0}$ such that for all $m\in \NN$,
$$
I_m=I(\nu_1)_{ma_1}\cap\cdots\cap I(\nu_s)_{ma_s}.
$$




$\mathcal I$ is called an $\RR$-divisorial filtration if $a_1,\ldots,a_s\in \RR_{>0}$ and $\mathcal I$ is called a $\QQ$-divisorial filtration if $a_1,\ldots,a_s\in \QQ$. If $a_i\in \RR_{>0}$, then we define
$$
I(\nu_i)_{na_i}:=\{f\in R\mid \nu_i(f)\ge na_i\}=I(\nu_i)_{\lceil na_i\rceil},
$$
where $\lceil x\rceil$ is the round up of a real number.


 \subsection{Divisorial filtrations on normal excellent local rings} Let $R$ be a normal excellent local ring. Let $\mathcal I=\{I_m\}$ where 
 $$
I_m=I(\nu_1)_{ma_1}\cap\cdots\cap I(\nu_s)_{ma_s}.
$$ 
 for some divisorial valuations $\nu_1,\ldots,\nu_s$ on $R$ be an $\RR$-divisorial filtration on a normal excellent local ring $R$, with $a_1,\ldots, a_s\in \RR_{>0}$. Then there exists a projective birational morphism $\phi:X\rightarrow \mbox{Spec}(R)$ such that $X$ is a normal integral $R$-scheme such that there exist prime divisors $F_1,
 \ldots, F_s$ on $X$ such that $\mathcal O_{\nu_i}=\mathcal O_{X,F_i}$ for  $1\le i\le s$. After possibly replacing $\nu_i$ with an equivalent valuation, $\nu_i$ is the valuation $\nu_i(x)=n$ if $x\in \mathcal O_{X_i,F_i}$ and $x=t_i^nu$ where $t_i$ is regular parameter in $\mathcal O_{X,F_i}$ and $u$ is a unit in $\mathcal O_{X_i,F_i}$.
  Let $D=a_1F_1+\cdots+a_sF_s$, an effective $\RR$-divisor, where $\lceil x\rceil$ is the smallest integer greater than or equal to a real number $x$. Define $\lceil D\rceil=\lceil a_1\rceil F_1+\cdots+\lceil a_s\rceil F_s$, an integral divisor. 
 We have coherent sheaves $\mathcal O_X(-\lceil n D\rceil)$ on $X$ such that 
 \begin{equation}\label{N1}
 \Gamma(X,\mathcal O_X(-\lceil nD\rceil ))=I_n
 \end{equation}
 for $n\in \NN$. If $X$ is nonsingular then $\mathcal O_X(-\lceil nD\rceil)$ is invertible. The formula (\ref{N1}) is independent of choice of $X$. Further, even on a particular $X$, there are generally many different choices of effective $\RR$-divisors $G$ on $X$ such that $\Gamma(X,\mathcal O_X(-\lceil nG\rceil))=I_n$ for all $n\in \NN$. Any choice of a divisor $G$ on such an $X$ for which the formula $\Gamma(X,\mathcal O_X(-\lceil nG\rceil))=I_n$ for all $n\in \NN$ holds will be called a representation of the filtration  $\mathcal I$. 
 
 Given an  $\RR$-divisor $D=a_1F_1+\cdots+a_sF_s$ on $X$ we have a divisorial filtration
 $\mathcal I(D)=\{I(D)_n\}$ where 
 $$
 I(D)_n=\Gamma(X,\mathcal O_X(-\lceil nD\rceil ))=I(\nu_1)_{\lceil na_1\rceil}\cap\cdots\cap I(\nu_s)_{\lceil na_s\rceil} 
 =I(\nu_1)_{ma_1}\cap\cdots\cap I(\nu_s)_{ma_s}.
 $$ 
 
 \begin{Lemma}\label{LemmaTau} Suppose that $\mathcal I=\{I_n\}$ is a $\QQ$-divisorial filtration of a two dimensional normal excellent local ring $R$ and $\tau:\mbox{Proj}(R[\mathcal I])\rightarrow \mbox{Spec}(R)$ is the natural morphism. Then 
$$
\mbox{Proj}(R[\mathcal I])\setminus \tau^{-1}(m_R)\cong \mbox{Spec}(R)\setminus\{m_R\}.
$$
\end{Lemma}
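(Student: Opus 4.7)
The plan is to cover $U := \mbox{Spec}(R) \setminus \{m_R\}$ by the affine opens $D(f) = \mbox{Spec}(R_f)$ for $f \in m_R \setminus \{0\}$ and to show that the restriction $\tau^{-1}(D(f)) \to D(f)$ is an isomorphism for each such $f$. Because these local isomorphisms are simply restrictions of the single morphism $\tau$, they automatically glue to the desired global isomorphism $\mbox{Proj}(R[\mathcal I]) \setminus \tau^{-1}(m_R) \cong U$.

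Fix $f \in m_R \setminus \{0\}$. Because $R$ is a two-dimensional normal excellent local ring and $f \in m_R$, the localization $R_f$ is Noetherian, normal, and of dimension at most one, hence a Dedekind domain. By the standard compatibility of $\mbox{Proj}$ with base change along $R \to R_f$, we have $\tau^{-1}(D(f)) = \mbox{Proj}(R_f[\mathcal I_f])$, where $\mathcal I_f := \{I_n R_f\}$.

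The key local computation is the following. Since $\pi : X \to \mbox{Spec}(R)$ is birational and an isomorphism over $U$, for every height-one prime $P$ of $R_f$ the preimage $\pi^{-1}(\mbox{Spec}(R_P))$ is $\mbox{Spec}(R_P)$, and $\Delta$ restricts to $a_P \cdot [PR_P]$ for some $a_P \in \QQ_{\ge 0}$, with only finitely many $a_P$ nonzero (as $\mbox{Supp}(\Delta)$ is a finite set of prime divisors on $X$). Flat base change along $R \to R_P$ combined with (\ref{eqI6}) then yields $(I_n)_P = \pi_P^{\lceil n a_P \rceil} R_P$ for each $n \ge 0$, where $\pi_P$ is a uniformizer of $R_P$.

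Now choose $d > 0$ with $d a_P \in \ZZ$ for every $P$; this is possible because $\mathcal I$ is $\QQ$-divisorial and only finitely many $a_P$ are nonzero. Then $(I_{dn})_P = \pi_P^{n\cdot d a_P} R_P = ((I_d)_P)^n$ at every height-one prime $P$ of $R_f$, so $I_{dn} R_f = (I_d R_f)^n$ by the Dedekind property of $R_f$, and $I_d R_f$ is a nonzero, hence invertible, ideal of $R_f$. Consequently
$$
R_f[\mathcal I_f]^{(d)} = \bigoplus_{n \ge 0} I_{dn} R_f = \bigoplus_{n \ge 0} (I_d R_f)^n
$$
is the Rees algebra of an invertible ideal of $R_f$, whose $\mbox{Proj}$ is canonically the base scheme $\mbox{Spec}(R_f)$. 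Proposition \ref{PropVer} then yields $\mbox{Proj}(R_f[\mathcal I_f]) \cong \mbox{Spec}(R_f)$, which completes the proof. The main technical point is the identification $(I_n)_P = \pi_P^{\lceil n a_P \rceil} R_P$ coming from flat base change along the isomorphism $\pi|_{\pi^{-1}(U)}$; once this is in place, the Veronese trick together with the triviality of $\mbox{Proj}$ for the Rees algebra of an invertible ideal finishes the argument.
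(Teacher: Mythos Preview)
Your proof is correct and shares the same overall architecture as the paper's: cover $U=\mbox{Spec}(R)\setminus\{m_R\}$ by affine opens, show that on each piece a suitable Veronese of the restricted filtration is the Rees algebra of a locally principal ideal, and conclude via Proposition~\ref{PropVer} that $\tau$ restricts to an isomorphism there. The difference lies in how the local computation is obtained. The paper works directly from the defining description $I_n=I(v_1)_{a_1n}\cap\cdots\cap I(v_r)_{a_rn}$: any $v_i$ whose center on $R$ is a height-one prime $P_j$ must be the $P_j$-adic valuation (because the DVR $R_{P_j}$ is dominated by $\mathcal O_{v_i}$), and the paper then builds an explicit finite affine cover of $U$ tailored to the $P_j$, on each chart of which $I_n$ is visibly the $n$-th power of a principal ideal (or the unit ideal). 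You instead invoke the resolution $\pi:X\to\mbox{Spec}(R)$ and the representation~(\ref{eqI6}), use that $\pi$ is an isomorphism over $U$ to read off $(I_n)_P=\pi_P^{\lceil na_P\rceil}R_P$ at each height-one $P$, and then pass from the local data to $I_{dn}R_f=(I_dR_f)^n$ via the Dedekind property of $R_f$. The paper's route is more self-contained within Section~\ref{SecDiv} (the resolution and the fact that $\pi$ is an isomorphism over $U$ are only set up in Section~\ref{SecRes}), while yours packages the local-to-global step neatly through invertible ideals on a Dedekind ring and avoids constructing a bespoke cover.
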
 

\begin{proof} There exist divisorial valuations $v_1,\ldots,v_r$ of $K$ which are nonnegative on $R$ and positive rational numbers $a_1,\ldots,a_r$ such that
$I_n=I(v_1)_{a_1n}\cap\cdots\cap I(v_r)_{a_rn}$ for $n\ge 0$. We can assume that the $v_i$ are pairwise inequivalent valuations. Let $n_0$ be a positive integer such that $a_in_0$ is a positive integer for all $i$. By Proposition \ref{PropVer}, $\mbox{Proj}(R[\mathcal I]^{(n_0)})\cong \mbox{Proj}(R[\mathcal I])$. So we may replace the $a_i$ with $a_in_0$ for all $i$, and so assume that $a_i$ are positive integers for all $i$. Let $\{P_j\}$ be the set of distinct prime ideals which occur as centers of the $v_i$ on $R$, and are not equal to $m_R$.

Suppose that $v_i$ has the center $P_j$ on $R$. Since $R$ is normal and $P_j$ has height one in $R$, $R_{P_j}$ is a one dimensional regular local ring, and thus is a discrete rank 1 valuation ring. We have that $\mathcal O_{v_i}$ dominates $R_{P_j}$. Thus $\mathcal O_{v_i}$ is a localization of $R_{P_j}$ by Theorem VI.3.3, page 8  \cite{ZS2}. 
Thus $R_{P_j}=\mathcal O_{v_i}$ since $\dim R_{P_j}=1$, so $v_i$ is equivalent to the $P_jR_{P_j}$-adic valuation of $R_{P_j}$. 

We may thus assume, after possibly reindexing the $v_i$, that $v_i$ is the $P_iR_{P_i}$-adic valuation for $1\le i\le s$ and $v_i$ has center $m_R$ on $R$ for $s<i\le r$. Let $u_i\in R_{P_i}$ be a regular parameter. Then $u_i=0$ is a local equation of the curve $V(P_i)$ in an open neighborhood of the point $P_i$ in $\mbox{Spec}(R)$. Thus there exist $g_i\in R$ such that $u_i\in R_{g_i}$, $g_i\not\in P_i$ and $P_iR_{g_i}=u_iR_{g_i}$. Write 
$\cap_{j\ne i} P_j=(f_{i,1},\ldots,f_{i,t_i})$. $P_i\in \mbox{Spec}(R)\setminus V(\cap_{j\ne i}P_j)=\cup_l\mbox{Spec}(R_{f_{i,l}})$, so there exists $l$ such that $P_i\in \mbox{Spec}(R_{f_i,l})$. Let $h_i=g_if_{i,l}$. $P_i\in \mbox{Spec}(R_{h_i})$, $P_j\not\in \mbox{Spec}(R_{h_i})$ for $j\ne i$  and $u_iR_{h_i}=P_iR_{h_i}$.
Write $\cap_jP_j=(a_1,\ldots,a_m)$. $\mbox{Spec}(R)\setminus V(\cap_jP_j)=\cup_l\mbox{Spec}(R_{a_l})$. We  have that
$$
\{\mbox{Spec}(R_{a_1}),\ldots,\mbox{Spec}(R_{a_m}), \mbox{Spec}(R_{h_1}),\ldots,\mbox{Spec}(R_{h_s})\}
$$
 is an affine cover of $\mbox{Spec}(R)\setminus\{m_R\}$. For $1\le l\le m$ and $n\ge 1$, $I_nR_{a_l}=R_{a_l}$, so 
 $$
 \tau^{-1}(\mbox{Spec}(R_{a_l}))=\mbox{Proj}(R_{a_l}[I_nR_{a_l}t^n\mid n\ge 1])=\mbox{Proj}(R_{a_l}[t])=\mbox{Spec}(R_{a_l})
 $$
 for $1\le l\le m$. We also have that $I_nR_{h_i}=u_i^{na_i}R_{h_i}$ for $1\le i\le s$ and $n\ge 1$, so that 
 $$
 \tau^{-1}(\mbox{Spec}(R_{h_i}))=\mbox{Proj}(R_{h_i}[I_nR_{h_i}t^n\mid n\ge 1])=\mbox{Proj}(R_{h_i}[u_i^{a_i}t])=\mbox{Spec}(R_{h_i}).
 $$
Thus $\mbox{Proj}(R[\mathcal I])\setminus \tau^{-1}(m_R)\cong \mbox{Spec}(R)\setminus\{m_R\}$.
\end{proof}

\section{Riemann-Roch theorems for curves}\label{SecRR}

We summarize the famous Riemann-Roch theorems for curves. The following theorems are standard over algebraically closed fields. A reference where they are proven over an arbitrary field $k$ is \cite[Section 7.3]{Lin}. The results that we need are stated in \cite[Remark 7.3.33]{Lin}.

Let $E$ be an integral regular projective curve over a field $k$. For $\mathcal F$ a coherent sheaf on $E$ define
$h^i(\mathcal F)=\dim_kH^i(E,\mathcal F)$.

Let $D=\sum a_ip_i$ be a divisor on $E$, where $p_i$ are prime divisors on $E$ (closed points) and $a_i\in \ZZ$. We have an associated invertible sheaf $\mathcal O_X(D)$. Define
 \begin{equation}\label{eq4*}
 \deg(D)=\deg(\mathcal O_E(D))=\sum a_i[\mathcal O_{E_i,p_i}/m_{p_i}:k].
 \end{equation}
The Riemann-Roch formula is
\begin{equation}\label{eq44}
\chi(\mathcal O_{E}(D)):=h^0(\mathcal O_{E}(D))-h^1(\mathcal O_{E}(D))=\mbox{deg}(D)+1-p_a(E)
\end{equation}
where $p_a(E)$ is the arithmetic genus of $E$. 

We further have Serre duality,
\begin{equation}\label{eq42}
H^1(E,\mathcal O_{E}(D))\cong H^0(E,\mathcal O_{E}(K-D))
\end{equation}
where $K=K_E$ is a canonical divisor on $E$. As a consequence, we have 
\begin{equation}\label{eq43}
\deg D>2p_a(E)-2=\deg(K)\mbox{ implies }H^1(E,\mathcal O_{E}(D))=0.
\end{equation}

If $\mathcal L$ is an invertible sheaf on $E$ then $\mathcal L\cong \mathcal O_E(D)$ for some divisor $D$ on $E$, and we may define $\deg(\mathcal L)=\deg(\mathcal O_X(D))=\deg(D)$.  

We will apply the above formulas in the case that $E$ is a prime exceptional divisor for a resolution of singularities $\pi:X\rightarrow\mbox{Spec}(R)$ as in Section \ref{SecRes}. We take $k=R/m_R$. We have that  $E$ is projective over $k=R/m_R$, and $E$ is a nonsingular (by assumption) integral curve.  Let $D$ be a divisor on $X$. Then
$\deg(\mathcal O_X(D)\otimes\mathcal O_E)=(D\cdot E)$.

\section{Initial results on a resolution of singularities}\label{SecRes} This section summarizes a construction and some results in \cite{CNg}.

 Let $(R,m_R)$ be a 2-dimensional normal excellent local ring with quotient field $K$ and $\mathcal I=\{I_n\}$ be a $\QQ$-divisorial filtration of $R$. There exists (by \cite{Li} or \cite{CJS}) a birational projective morphism $\pi:X\rightarrow\mbox{Spec}(R)$ such that $X$ is nonsingular, and an effective  $\QQ$-divisor $D$ on $X$ such that  $I_n=\Gamma(X,\mathcal O_X(-\lceil nD\rceil ))$ for all $n\ge 0$. By Lemma 4.1 \cite{CNg}, there exists a Zariski decomposition of $D$; that is, there exist unique effective $\QQ$-divisors $\Delta$ and $B$ on $X$ such that $\Delta=D+B$, $-\Delta$ is nef ($(-\Delta\cdot E)\ge 0$ if $E$ is an exceptional curve of $\pi$), $B$ has exceptional support and $(\Delta\cdot E)=0$ if $E$ is a component  of $B$.
Then
$$
\Gamma(X,\mathcal O_X(-\lceil n\Delta\rceil))=\Gamma(X,\mathcal O_X(-\lceil nD \rceil ))=I_n
$$
 for $n\ge 0$ by Lemma 4.3 \cite{CNg}. This local Zariski decomposition is a local analogue of the decomposition defined for effective divisors on a nonsingular projective surface by Zariski in \cite{Z} (a more recent reference is \cite{BCK}).
By Lemma 2.5 \cite{CNg}, $\pi:X\rightarrow \mbox{Spec}(R)$ is the blowup of an $m_R$-primary ideal, so 
\begin{equation}\label{eq10}
\pi:X\setminus \pi^{-1}(m_R)\rightarrow \mbox{Spec}(R)\setminus\{m_R\}\mbox{ is an isomorphism.}
\end{equation}

Let $E_1,\ldots,E_r$ be the prime exceptional divisors of $X\rightarrow \mbox{Spec}(R)$.  After possibly blowing up $X$ some more, 
we may assume that $E_1+\cdots+E_r$ is a simple normal crossings divisor on $X$; that is, all $E_i$ are nonsingular, each closed point $p\in E_i$ is contained in at most one other $E_j$, and there exist regular parameters $x,y$ in $\mathcal O_{X,p}$ such that $x=0$ is a local equation of $E_i$ at $p$ and if $p\in E_j$, then $y=0$ is a local equation of $E_j$ at $p$.

Let 
 $$
 R[\mathcal I]=\oplus_{n\ge 0}I_n=\oplus_{n\ge 0}\Gamma(X,\mathcal O_X(-\lceil n\Delta\rceil)),
  $$
  the Rees algebra associated to $\mathcal I$, as defined earlier. Define $Z(\mathcal I)=\mbox{Proj}(R[\mathcal I])$, with natural morphism $\tau:Z(\mathcal I)\rightarrow \mbox{Spec}(R)$. We have that $Z(I)\setminus \tau^{-1}(m_R)\cong \mbox{Spec}(R)\setminus \{m_R\}$ by Lemma \ref{LemmaTau}. 
  
  We have the remarkable fact that $m_RR[\mathcal I]$ has only finitely many minimal primes.

\begin{Proposition}\label{Prop1}(Proposition 6.7 \cite{CNg})
Let 
$$
P_j=P_{E_j}=\oplus_{n\ge 0}\Gamma(X,\mathcal O_X(-\lceil n\Delta\rceil-E_j)\mbox{ for }1\le j\le r.
$$
 Then 
\begin{enumerate}
\item[1)] $P_j$ is a prime ideal in $R[\mathcal I]$ for $1\le j<r$.
\item[2)] $\sqrt{m_RR[\mathcal I]}=\cap_{i=1}^rP_i$.
\end{enumerate}
\end{Proposition}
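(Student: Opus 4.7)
My plan is to handle (1) and (2) by reducing every computation to the divisorial valuation $v_{E_j}$ on $K$, exploiting the fact that for $f\in R$, membership in $\Gamma(X,\mathcal O_X(-\lceil n\Delta\rceil))$ translates into the single inequality $v_{E_j}(f)\ge \lceil n\Delta\rceil_{E_j}$. A technical convenience is to first pick $d\in\ZZ_{>0}$ with $d\Delta$ an integral divisor, so that on the $d$-th Veronese $S=R[\mathcal I]^{(d)}$ the ceiling function disappears.

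For part (1), on the Veronese $S$ I would define $w_j(f)=v_{E_j}(f)-nd\cdot\operatorname{mult}_{E_j}\Delta$ for nonzero homogeneous $f\in S_n=I_{nd}$. This $w_j$ is additive on products and takes nonnegative integer values on $S$, so the ideal $Q_j=P_j\cap S=\bigoplus_n\Gamma(X,\mathcal O_X(-nd\Delta-E_j))=\{f\in S:w_j(f)\ge 1\}$ is prime, because its complement is multiplicatively closed. To ascend to primality of $P_j$ in $R[\mathcal I]$ itself, I argue directly: given homogeneous $f\in I_m$, $g\in I_n$ with $fg\in P_j^{(m+n)}$, the valuation identity $v_{E_j}(fg)=v_{E_j}(f)+v_{E_j}(g)$ combined with the equality $\lceil m\Delta\rceil_{E_j}+\lceil n\Delta\rceil_{E_j}=\lceil(m+n)\Delta\rceil_{E_j}$ in the relevant cases forces $f$ or $g$ into $P_j$. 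The additivity of ceilings along $E_j$ on the support of $\Delta$ is where the Zariski decomposition plays its role, via the minimality of the representing divisor.

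For part (2), the easy inclusion $\sqrt{m_R R[\mathcal I]}\subseteq\bigcap_i P_i$ follows once (1) is in hand: for $x\in m_R$ and $f\in I_n$ one has $v_{E_i}(x)\ge 1$ (since $E_i$ lies over the closed point $m_R$), whence $v_{E_i}(xf)\ge 1+\lceil n\Delta\rceil_{E_i}$ and $xf\in P_i^{(n)}$. For the reverse inclusion, take homogeneous $f\in\bigcap_i P_i$ of degree $n$, so $\operatorname{div}(f)\ge \lceil n\Delta\rceil+\sum_i E_i$. Using that $\pi$ is the blowup of an $m_R$-primary ideal (Lemma 2.5 of \cite{CNg}), write $m_R\mathcal O_X=\mathcal O_X(-Z)$ with $Z=\sum z_iE_i$, $z_i\ge 1$. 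For $N\ge\max z_i$,
$$
\operatorname{div}(f^N)\ge N\lceil n\Delta\rceil+N\sum_i E_i\ge\lceil Nn\Delta\rceil+Z,
$$
so $f^N\in \Gamma(X,\mathcal O_X(-Z-\lceil Nn\Delta\rceil))=\Gamma(X,m_R\mathcal O_X\cdot\mathcal O_X(-\lceil Nn\Delta\rceil))$. The remaining task is to identify this global section module with $m_R I_{Nn}$, after possibly enlarging $N$.

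The main obstacle is precisely this last identification in (2): showing that the natural map $m_R\otimes I_{Nn}\to\Gamma(X,m_R\mathcal O_X\otimes\mathcal O_X(-\lceil Nn\Delta\rceil))$ is surjective on $H^0$. This is not a formal statement; I would expect to establish it via a Lipman-type vanishing of $H^1$ on the two-dimensional resolution $X$, using the nef property of $-\Delta$ together with the projection formula and a standard exact sequence comparing $\mathcal O_X(-\lceil Nn\Delta\rceil)$ with its twist by $m_R\mathcal O_X=\mathcal O_X(-Z)$. Once this cohomological step is available, $f^N\in m_R I_{Nn}\subseteq m_R R[\mathcal I]$, giving $f\in\sqrt{m_R R[\mathcal I]}$ and completing (2). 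Any subtle ceiling mismatch that arises in the primality step of (1) when $n\Delta_{E_j}\notin\ZZ$ would likewise be handled by the Veronese reduction together with a careful ceiling analysis, rather than by a separate argument.
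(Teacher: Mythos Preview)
This proposition is not proved in the present paper; it is quoted from \cite{CNg}, so there is no in-paper argument to compare against. I comment only on the soundness of your outline.

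There is a genuine problem in part (1). Your claim that $\lceil m\Delta\rceil_{E_j}+\lceil n\Delta\rceil_{E_j}=\lceil(m+n)\Delta\rceil_{E_j}$ holds ``in the relevant cases'' via the Zariski decomposition is unjustified, and in fact the literal definition of $P_j$ can fail to be prime. Take $R$ regular of dimension two, $X$ the blowup of $m_R$ with exceptional curve $E$, and $\Delta=\tfrac12 E$; then $-\Delta$ is nef, $I_n=m_R^{\lceil n/2\rceil}$ is a $\QQ$-divisorial filtration, and $(P_E)_n=\Gamma(X,\mathcal O_X(-\lceil n/2\rceil E-E))=m_R^{\lceil n/2\rceil+1}$. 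Any $f\in m_R\setminus m_R^2$ placed in degree~$1$ satisfies $f\notin (P_E)_1=m_R^2$ yet $f^2\in m_R^2=(P_E)_2$. Your Veronese trick only shows that $Q_j=P_j\cap R[\mathcal I]^{(d)}$ is prime, and primality does not lift formally from the Veronese to $R[\mathcal I]$. The version in \cite{CNg} presumably either works with integral $\Delta$ throughout or uses the description $(P_j)_n=\{f\in I_n:v_{E_j}(f)>n\,\mathrm{mult}_{E_j}\Delta\}$ with a strict inequality against the rational value, which \emph{is} prime but differs from $\Gamma(X,\mathcal O_X(-\lceil n\Delta\rceil-E_j))$ in degrees where $n\,\mathrm{mult}_{E_j}\Delta\notin\ZZ$; your write-up does not distinguish these.

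For part (2) the reverse inclusion also has a real gap, which you correctly flag but do not close. First, $m_R\mathcal O_X$ need not be invertible: $\pi$ is the blowup of some $m_R$-primary ideal $J$, so it is $J\mathcal O_X$ that equals $\mathcal O_X(-Z')$, not $m_R\mathcal O_X$. More seriously, even after obtaining $f^N\in\Gamma(X,\mathcal O_X(-Z'-\lceil Nn\Delta\rceil))$, identifying this with $J I_{Nn}$ (let alone $m_R I_{Nn}$) is precisely the assertion that the surjection of sheaves $J\otimes_R\mathcal O_X(-\lceil Nn\Delta\rceil)\twoheadrightarrow\mathcal O_X(-Z'-\lceil Nn\Delta\rceil)$ stays surjective on global sections; this is a genuine vanishing statement that does not follow from nefness of $-\Delta$ alone and can fail when $R$ is not a rational singularity. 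An alternative that sidesteps cohomology is to show directly that every homogeneous prime $Q\supset m_RR[\mathcal I]$ contains some $P_j$, but your outline does not pursue this.
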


\begin{Proposition}\label{Prop2}(Proposition 6.9 \cite{CNg})
Let $P_j$ be as defined in Proposition \ref{Prop1}. Then
\begin{enumerate}
\item[1)] $\dim R[\mathcal I]/P_j=2$ if $(\Delta\cdot E_j)<0$.
\item[2)] $\dim R[\mathcal I]/P_j\le 1$ if $(\Delta\cdot E_j)=0$.
\end{enumerate}
\end{Proposition}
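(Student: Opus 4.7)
The plan is to analyze $A:=R[\mathcal I]/P_j$ by restricting sections to the exceptional curve $E_j$. Since $\nu_{E_j}$ dominates $R$, the constant coefficient $(P_j)_0=\Gamma(X,\mathcal O_X(-E_j))$ equals $m_R$, so $A$ (which is a domain by Proposition \ref{Prop1}) is a graded domain with $A_0=k:=R/m_R$. From the short exact sequence
\[
0\to\mathcal O_X(-\lceil n\Delta\rceil-E_j)\to\mathcal O_X(-\lceil n\Delta\rceil)\to\mathcal L_n\to 0,\qquad\mathcal L_n:=\mathcal O_X(-\lceil n\Delta\rceil)\otimes\mathcal O_{E_j},
\]
the graded piece $A_n$ is identified with the image of the restriction $I_n\to\Gamma(E_j,\mathcal L_n)$, so $A$ embeds as a graded $k$-subalgebra of $\bigoplus_n\Gamma(E_j,\mathcal L_n)$. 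Since the fractional part of $n\Delta$ has uniformly bounded coefficients, $\deg\mathcal L_n=-n(\Delta\cdot E_j)+O(1)$, and Riemann--Roch on the regular projective curve $E_j/k$ (Section \ref{SecRR}) gives $\dim_k\Gamma(E_j,\mathcal L_n)\le\max(\deg\mathcal L_n+1,0)+p_a(E_j)$.

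For part 2, the hypothesis $(\Delta\cdot E_j)=0$ forces $\deg\mathcal L_n$, and therefore $\dim_kA_n$, to be uniformly bounded in $n$. I would then deduce $\dim A\le 1$ from the following dimension lemma for a graded domain $A$ with $A_0=k$ a field: for any nonzero homogeneous $t\in A_d$ with $d>0$, $(A[t^{-1}])_0=\varinjlim_m A_{md}$ is a direct limit under injective multiplication-by-$t$ maps of $k$-vector spaces of uniformly bounded dimension, hence a finite-dimensional $k$-algebra. As a subring of $\mbox{Frac}(A)$ it is a domain, so a field, giving $\dim D_+(t)=0$ for every such $t$. Thus $\dim\mbox{Proj}(A)=0$ and $\dim A=\dim\mbox{Proj}(A)+1=1$ (or $\dim A=0$ if $A=A_0$).

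For part 1, $(\Delta\cdot E_j)<0$ makes $\deg\mathcal L_n$ linear in $n$ with positive slope. Since $m_R\subset P_j$, we have $\dim A\le\dim R[\mathcal I]/m_RR[\mathcal I]=\ell(\mathcal I)\le 2$, so it remains to show $\dim A\ge 2$. By the converse of the lemma above, this reduces to showing the Hilbert function $n\mapsto\dim_kA_n$ is unbounded. From the long exact sequence the cokernel of $I_n\to\Gamma(E_j,\mathcal L_n)$ injects into the $m_R$-torsion of $H^1(X,\mathcal O_X(-\lceil n\Delta\rceil-E_j))$, so it suffices to bound this cohomology group uniformly in $n$.

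The cohomological bound in part 1 is the main obstacle. I would pass to a Veronese with $n_0\Delta$ integral, so $-n_0\Delta$ is a $\pi$-nef $\ZZ$-divisor on the surface $X$, and apply a relative Grauert--Riemenschneider type vanishing theorem; the perturbation by $-E_j$ is controlled by the intersection numbers of $E_j$ with the other exceptional components together with the nefness of $-\Delta$. Granting this, $\dim_kA_n\ge-n(\Delta\cdot E_j)+O(1)\to\infty$, forcing $\dim A=2$.
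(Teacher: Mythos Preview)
The paper does not give its own proof of this proposition; it is quoted from \cite{CNg}. So there is no argument in the present paper to compare against, and the relevant question is whether your outline stands on its own. The overall architecture---restrict to $E_j$, embed $A=R[\mathcal I]/P_j$ into the section ring $B=\bigoplus_n\Gamma(E_j,\mathcal L_n)$, and read off $\dim A$ from the growth of $\dim_kA_n$---is sound, and your Part 2 is essentially complete: once $\dim_kA_n\le C$ you have in fact shown that $A$ is a finite $k[t]$-module (the Veronese $A^{(d)}$ is finite over $k[t]$ by your direct-limit argument, and $A$ is integral over $A^{(d)}$), so $A$ is Noetherian and the formula $\dim A=\dim\mbox{Proj}(A)+1$ is legitimate.

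Part 1 has two genuine soft spots. First, ``relative Grauert--Riemenschneider'' is not the right invocation: GR vanishing concerns $R^i\pi_*\omega_X$, whereas you need a uniform bound on $\ell_R\bigl(H^1(X,\mathcal O_X(-n\Delta-E_j))\bigr)$. That bound is exactly Corollary 5.7 of \cite{CNg} (invoked later in this paper in the proof of Theorem \ref{Theorem5}); it is a consequence of $-\Delta$ being nef on the surface $X$, proved by elementary curve-by-curve estimates rather than by GR. Second, the ``converse of the lemma'' is not a formality: for an arbitrary non-Noetherian graded domain over a field, unbounded Hilbert function does not obviously force $\dim\ge 2$ (your $A_{(t)}$ could a priori be an infinite field extension of $k$). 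Here the fix uses exactly the bound you are already assuming: since $\dim_k(B_n/A_n)\le e$ for all $n$, passing to the direct limit gives $\dim_k\bigl(B_{(t)}/A_{(t)}\bigr)\le e$, so $B_{(t)}$ is a finite $A_{(t)}$-module. As $(\Delta\cdot E_j)<0$ makes $\mathcal L_1$ ample on $E_j$, $B_{(t)}$ is the coordinate ring of a nonempty affine open in $E_j$, a Dedekind domain of dimension $1$; hence $\dim A_{(t)}=1$ and a height-one homogeneous prime of $A$ not containing $t$ sits strictly below $A_+$, giving $\dim A\ge 2$.
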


We observe that $P_j\not\in \mbox{Proj}(R[\mathcal I])$ if $\dim R[\mathcal I]/P_j=0$, in which case, $P_j=q\oplus R[\mathcal I]_+$ where $q=\pi(E_j)\in \mbox{Spec}(R)$.

\begin{Lemma}\label{Lemma30} $R[\mathcal I]$ is normal.
\end{Lemma}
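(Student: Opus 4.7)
The plan is to prove $R[\mathcal{I}]$ is integrally closed in its fraction field $K(t)$ by exploiting the fact that each $I_n$ is an integrally closed ideal of $R$. By Lemma \ref{LemmaC}(2), $I_n=\Gamma(X,\mathcal{O}_X(-\lceil n\Delta\rceil))$ is integrally closed in $R$, since $X$ is normal and $\lceil n\Delta\rceil$ is an effective Cartier divisor on the nonsingular surface $X$.

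Writing $S=R[\mathcal{I}]\subset R[t]$, let $\bar{S}$ denote the integral closure of $S$ in $K(t)$. Since $K[t,t^{-1}]$ is a localization of the UFD $K[t]$, it is integrally closed in $K(t)$, so $\bar{S}\subseteq K[t,t^{-1}]$. Because $S$ is a $\ZZ$-graded subring of $K[t,t^{-1}]$, the integral closure $\bar{S}$ inherits a grading, and a standard leading-term argument on an integral equation (the lowest-degree term of $z$ can only cancel against the lowest-degree term of $z^m$) rules out negative-degree components. Hence $\bar{S}=\bigoplus_{n\geq 0}J_n t^n$ for some $R$-submodules $J_n\subseteq K$, and I am reduced to showing $J_n\subseteq I_n$ for each $n$.

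Given a homogeneous $z=ft^n\in\bar{S}_n$ with integral equation $z^m+s_1 z^{m-1}+\cdots+s_m=0$ and $s_i\in S$, I would extract the degree-$nm$ component to obtain a monic relation $f^m+b_1 f^{m-1}+\cdots+b_m=0$ with $b_i\in I_{in}\subseteq R$. This forces $f\in R$ by normality of $R$; then, for each divisorial valuation $v_j$ in the representation $I_n=\bigcap_j I(v_j)_{\lceil na_j\rceil}$, applying $v_j$ to the monic relation yields some $i\geq 1$ with $iv_j(f)\geq v_j(b_i)\geq in a_j$, so $v_j(f)\geq na_j$, and integrality of $v_j$ promotes this to $v_j(f)\geq\lceil na_j\rceil$. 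Thus $f\in I_n$, so $\bar{S}=S$. The main technical step is the grading argument for $\bar{S}$, which has to go through without any finite-generation hypothesis on $S$; once that is in place, the reduction to a single graded piece and the valuative verification are routine, and the argument applies uniformly to arbitrary $\QQ$-divisorial filtrations.
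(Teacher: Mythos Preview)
Your argument is correct. The paper itself gives no proof: it simply cites Lemma~5.8 of \cite{C5}. Your proof is a direct, self-contained verification, so the approaches differ in that yours actually exhibits the mechanism while the paper outsources it.

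A few remarks on the comparison. The heart of your argument is the standard fact that the integral closure of an $\NN$-graded domain in its field of fractions is again graded; this does not require finite generation and is, for instance, in \cite{HS} (which the paper already cites), so you could point there rather than leaving it as ``standard.'' Once that is in place, your reduction to a homogeneous integral dependence $f^m+b_1f^{m-1}+\cdots+b_m=0$ with $b_i\in I_{in}$ is clean, and the valuation step is exactly right: one cannot simply appeal to integral closedness of $I_n$ directly, because $I_{in}\supseteq I_n^i$ goes the wrong way, so the valuative check $i\,v_j(f)\ge v_j(b_i)\ge ina_j$ is genuinely needed. Your observation that integrality of $v_j$ then upgrades $v_j(f)\ge na_j$ to $v_j(f)\ge\lceil na_j\rceil$ is what makes the argument work uniformly for $\QQ$-divisorial (not just integral divisorial) filtrations. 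The leading-term argument ruling out negative degrees is also fine as written. In short, your proof recovers what the external citation provides, with the benefit of being tailored to the divisorial-filtration setup of the paper.
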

This follows from Lemma 5.8 of \cite{C5}.

\section{Stable Base Locus}\label{SecsBL} From now on, we will use the notation established in Section \ref{SecRes}.

The concept of the stable base locus of a line bundle on a projective variety is discussed in \cite{LA}. We develop it in our local context. 

Let $\mathcal L$ be an invertible sheaf on $X$. Define the base locus of $\Gamma(X,\mathcal L)$ to be 
$$
\mbox{BL}(\Gamma(X,\mathcal L))=\{p\in X\mid \Gamma(X,\mathcal L)\mathcal O_{X,p}\ne \mathcal L_p\}.
$$
This is a closed subset of $X$ (regarded as an algebraic set and not as a scheme). The stable base locus of $\mathcal L$ is
$$
\mbox{sBL}(\mathcal L)=\cap_{n\ge 1}\mbox{BL}(\Gamma(X,\mathcal L^n)),
$$
which is also a closed subset of $X$. 

The support $\mbox{Supp}(D)$ of a divisor $D=\sum a_iF_i$, where $F_i$ are prime divisors and $a_i\in \ZZ$, is $\mbox{Supp}(D)=\cup_iF_i$.

\begin{Lemma}\label{Lemma4} Let $\mathcal L$ be an invertible sheaf on $X$. Then there exists $n_1$ such that 
$$
\mbox{BL}(\Gamma(X,\mathcal L^{nn_1}))=\mbox{sBL}(\mathcal L)
$$ for all $n\ge n_1$.
\end{Lemma}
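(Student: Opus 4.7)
The plan is to mimic the standard proof of Lazarsfeld's Proposition~2.1.21 in our setting. The key ingredient is a simple submultiplicativity property: if $s \in \Gamma(X,\mathcal L^m)$ generates the stalk $\mathcal L^m_p$ at a point $p$, then $s^k \in \Gamma(X,\mathcal L^{mk})$ generates $\mathcal L^{mk}_p$. Consequently
\[
\mbox{BL}(\Gamma(X,\mathcal L^{mk})) \subseteq \mbox{BL}(\Gamma(X,\mathcal L^{m})) \quad \text{for all } m, k \ge 1.
\]
More generally, a section of $\mathcal L^a$ that is nonvanishing at $p$ and a section of $\mathcal L^b$ that is nonvanishing at $p$ produce, via multiplication and taking powers, sections of every sufficiently divisible $\mathcal L^N$ nonvanishing at $p$.

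First I would invoke Noetherianity of $X$: since $X$ is of finite type over the Noetherian ring $R$, any intersection of closed subsets admits a finite subintersection that equals the full intersection. Applying this to the family $\{\mbox{BL}(\Gamma(X,\mathcal L^n))\}_{n \ge 1}$, I obtain integers $m_1,\ldots,m_k$ with
\[
\mbox{sBL}(\mathcal L) = \bigcap_{i=1}^{k} \mbox{BL}(\Gamma(X,\mathcal L^{m_i})).
\]

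Next I would set $n_1 = \mbox{lcm}(m_1,\ldots,m_k)$. For any $n \ge 1$, each $m_i$ divides $n n_1$, so by the submultiplicativity observation above,
\[
\mbox{BL}(\Gamma(X,\mathcal L^{n n_1})) \subseteq \mbox{BL}(\Gamma(X,\mathcal L^{m_i})) \quad \text{for each } i,
\]
and hence $\mbox{BL}(\Gamma(X,\mathcal L^{n n_1})) \subseteq \bigcap_i \mbox{BL}(\Gamma(X,\mathcal L^{m_i})) = \mbox{sBL}(\mathcal L)$. The reverse inclusion $\mbox{sBL}(\mathcal L) \subseteq \mbox{BL}(\Gamma(X,\mathcal L^{nn_1}))$ is immediate from the definition of the stable base locus. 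This yields the equality $\mbox{BL}(\Gamma(X,\mathcal L^{nn_1})) = \mbox{sBL}(\mathcal L)$ for every $n \ge 1$, which is at least as strong as the stated conclusion (one may then enlarge $n_1$ freely to get the form ``$n \ge n_1$'' in the statement).

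There is no real obstacle here; the only point that requires a sentence of verification is the submultiplicativity, which follows from passing to a local trivialization of $\mathcal L$ in a neighborhood of $p$, where sections of $\mathcal L^m$ correspond to functions and nonvanishing at $p$ is preserved under multiplication and taking powers.
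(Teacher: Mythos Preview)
Your proposal is correct and is essentially the same argument the paper has in mind: the paper does not write out a proof but simply remarks that it follows Lazarsfeld's Proposition~2.1.21 using the multiplicativity of global sections (stated there as $I_mI_n\subset I_{m+n}$), which is exactly the submultiplicativity-plus-Noetherianity argument you give. Your observation that the conclusion in fact holds for all $n\ge 1$ (not just $n\ge n_1$) is also fine.
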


The proof of Lemma \ref{Lemma4} is essentially the same as the  the proof of Proposition 2.1.21 of \cite{LA}, using the fact that $I_mI_n\subset I_{m+n}$ for all $m,n$.

 We will denote $\mbox{sBL}(\mathcal O_X(-n\Delta))$ by $\mbox{sBL}(-n\Delta)$ when $n\Delta$ is an integral divisor. Recall that $-\Delta$ is nef. 

\begin{Lemma}\label{Lemma5} Let $n_0$ be the smallest positive integer $n$ such that $n\Delta$ is an integral divisor and $BL(\Gamma(X,\mathcal O_X(-nn_0\Delta)))=\mbox{sBL}(-n_0\Delta)$ for all $n>0$.
Then $\mbox{sBL}(-n_0\Delta)$ is a finite union of exceptional curves $E$ for $\pi$, all of which satisfy $(E\cdot \Delta)=0$.
\end{Lemma}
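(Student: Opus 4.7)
My plan is to prove the lemma in three stages: (i) $\mbox{sBL}(-n_0\Delta)\subseteq\pi^{-1}(m_R)=E_1\cup\cdots\cup E_r$; (ii) any exceptional curve $E_j\subseteq\mbox{sBL}(-n_0\Delta)$ must satisfy $(E_j\cdot\Delta)=0$; and (iii) $\mbox{sBL}(-n_0\Delta)$ contains no isolated closed points, so it is a genuine union of entire curves.

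For (i) I would use the explicit affine cover of $\mbox{Spec}(R)\setminus\{m_R\}$ produced in the proof of Lemma \ref{LemmaTau}. On each piece $\mbox{Spec}(R_{h_i})$ the ideal $I_nR_{h_i}$ is principal, generated by $u_i^{na_i}$, and when $n=mn_0$ (so $na_i\in\ZZ$) this matches the stalk of $\mathcal O_X(-mn_0\Delta)$ at every point of $X$ above that piece. Therefore every $p\in X$ with $\pi(p)\ne m_R$ lies outside $\mbox{BL}(\Gamma(X,\mathcal O_X(-mn_0\Delta)))$ for every $m\ge 1$, giving (i).

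For (ii), suppose $E=E_j\subseteq\mbox{sBL}(-n_0\Delta)$. By the defining property of $n_0$, every section of $\mathcal O_X(-mn_0\Delta)$ vanishes along $E$ for every $m\ge 1$, so $I_{mn_0}=\Gamma(X,\mathcal O_X(-mn_0\Delta-E))$ for all such $m$. Passing to the $n_0$-th Veronese via Proposition \ref{PropVer}, and noting $(R[\mathcal I]/P_E)_0=R/m_R$ since $\Gamma(X,\mathcal O_X(-E))=m_R$, this forces $(R[\mathcal I]/P_E)^{(n_0)}$ to be concentrated in degree $0$, hence $\dim R[\mathcal I]/P_E=0$. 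Proposition \ref{Prop2} then rules out $(\Delta\cdot E)<0$, and nefness of $-\Delta$ rules out $(\Delta\cdot E)>0$, leaving $(\Delta\cdot E)=0$.

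Part (iii) is where I expect the main obstacle. Suppose $p\in\mbox{sBL}(-n_0\Delta)$ lies on no $E_j$ that is itself in $\mbox{sBL}(-n_0\Delta)$. Part (i) yields an exceptional curve $E\ni p$ with $E\not\subseteq\mbox{sBL}(-n_0\Delta)$, so for every $m\ge 1$ there exists $f\in I_{mn_0}$ whose restriction $f|_E$ is a nonzero section of $\mathcal L_m:=\mathcal O_X(-mn_0\Delta)|_E$, a line bundle of degree $mn_0(-\Delta\cdot E)\ge 0$ by nefness. If $(\Delta\cdot E)=0$ then $\mathcal L_m$ is a degree-zero line bundle on a projective curve admitting a nonzero section, hence trivial, so $f|_E$ is nowhere vanishing and in particular nonzero at $p$, contradicting $p\in\mbox{BL}(\Gamma(X,\mathcal O_X(-mn_0\Delta)))$. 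If $(\Delta\cdot E)<0$, the degree of $\mathcal L_m$ grows linearly in $m$, and by Riemann--Roch on $E$ (Section \ref{SecRR}) we have $H^1(E,\mathcal L_m)=0$ and $\mathcal L_m$ base-point-free for $m$ large. Using the long exact cohomology sequence of
$$
0\to\mathcal O_X(-mn_0\Delta-E)\to\mathcal O_X(-mn_0\Delta)\to\mathcal L_m\to 0,
$$
together with the linear growth of $\dim_{R/m_R}(R[\mathcal I]/P_E)_{mn_0}$ forced by $\dim R[\mathcal I]/P_E=2$ (Proposition \ref{Prop2}), I would argue that the image $V_m\subseteq H^0(E,\mathcal L_m)$ of $I_{mn_0}$ under restriction is itself base-point-free on $E$ for $m$ sufficiently large, again contradicting $p\in\mbox{BL}$. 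The delicate point is controlling, or circumventing, the cokernel of $I_{mn_0}\to H^0(E,\mathcal L_m)$—equivalently managing $H^1(X,\mathcal O_X(-mn_0\Delta-E))$—so that the image of $I_{mn_0}$ is big enough to be base-point-free on all of $E$; this is the step I anticipate needing the most care.
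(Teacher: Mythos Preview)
Your steps (i) and (ii) are correct, and so is the subcase $(\Delta\cdot E)=0$ of (iii); that argument is essentially Lemma~\ref{Lemma2}. The gap you anticipate in the subcase $(\Delta\cdot E)<0$ is real, and the outline does not close it. Bounding $h^1(X,\mathcal O_X(-mn_0\Delta-E))$ by a constant $e$ (which is indeed available from Corollary~5.7 of \cite{CNg}) only shows that the image $V_m\subseteq H^0(E,\mathcal L_m)$ has codimension at most $e$. But $p\in\mbox{BL}(\Gamma(X,\mathcal O_X(-mn_0\Delta)))$ is equivalent, via Nakayama along $E$, to $V_m\subseteq H^0(E,\mathcal L_m(-p))$, a subspace of codimension exactly $[\kappa(p):k]$ once $m$ is large. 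All you extract is $[\kappa(p):k]\le e$, which is no contradiction. Linear growth of $\dim V_m$ does not help: a linear series on a curve can be arbitrarily large and still have a fixed base point, and the multiplicative structure $V_m\cdot V_{m'}\subseteq V_{m+m'}$ gives no control on the \emph{minimum} order of vanishing at $p$.

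The paper avoids this case split entirely by a different device. Instead of restricting to a curve through the putative isolated point $q$, it strips off the divisorial fixed part: write $-\Delta=-M+B$ with $B$ effective, $\mbox{Supp}(B)$ equal to the curve part of the base locus, and $\Gamma(X,\mathcal O_X(-M))\mathcal O_X=\mathcal K\,\mathcal O_X(-M)$ for an ideal sheaf $\mathcal K$ of finite support. The key input is Proposition~5.2 of \cite{CNg}, which guarantees that $\mathcal O_X(-nM)$ is globally generated for some $n>0$. Since an isolated $q\in\mbox{sBL}(-\Delta)$ lies off $\mbox{Supp}(B)$, one has $\mathcal O_X(-n\Delta)_q=\mathcal O_X(-nM)_q$, and $\Gamma(X,\mathcal O_X(-nM))\subseteq\Gamma(X,\mathcal O_X(-n\Delta))$ then forces $q\notin\mbox{BL}(\Gamma(X,\mathcal O_X(-n\Delta)))$, a contradiction. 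With isolated points excluded, Proposition~5.1 of \cite{CNg} handles your (i) and (ii) simultaneously. The moral is that semi-ampleness of the moving part $-M$ is the correct lever here; Riemann--Roch on a single curve through $q$ does not see enough of the global geometry to rule out a persistent base point.
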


\begin{proof} After replacing $\Delta$ by   $n_0\Delta$ we may suppose that $\Delta$ is an integral divisor and $\mbox{BL}(\Gamma(X,\mathcal O_X(-n\Delta))=\mbox{sBL}(-\Delta)$ for all $n>0$.

Suppose  that there exists $q\in \mbox{sBL}(-n_0\Delta)$ which is an isolated closed point. We will derive a contradiction. Then $q$ is isolated in $\mbox{BL}(\Gamma(X,\mathcal O_X(-\Delta)))$. Write $-\Delta=-M+B$ where $M$ and $B$ are effective divisors such that $\mbox{Supp}(B)=\mbox{sBL}(-\Delta)$,
$\Gamma(X,\mathcal O_X(-M))=\Gamma(X,\mathcal O_X(-\Delta))$ and $\Gamma(X,\mathcal O_X(-M))\mathcal O_X=\mathcal K\mathcal O_X(-M)$ where $\mathcal K$ is an ideal sheaf of $S$ such that $\mathcal O_X/\mathcal K$ has finite support. We necessarily have that $q$ is in the support of $\mathcal O_X/\mathcal K$ but 
$q\not\in \mbox{Supp}(B)$. By Proposition 5.2 \cite{CNg}, there exists $n>0$ such that $\mathcal O_X(-nM)$ is generated by global sections, so in particular,
$\Gamma(X,\mathcal O_X(-nM))\mathcal O_{X,q}=\mathcal O_X(-nM)_q$. Since $q\not\in \mbox{Supp}(B)$, we have 
$$
\begin{array}{lll}
\mathcal O_X(-n\Delta)_q&=&\mathcal O_X(-nM)_q=\Gamma(X,\mathcal O_X(-nM))\mathcal O_{X,q}\\
&\subset& \Gamma(X,\mathcal O_X(-nM+nB))\mathcal O_{X,q}=\Gamma(X,\mathcal O_X(-n\Delta))\mathcal O_{X,q}.
\end{array}
$$
Thus $q\not\in \mbox{BL}(\Gamma(X,\mathcal O_X(-n\Delta))$, a contradiction. We thus have that $\mbox{sBL}(-\Delta)$ is a finite union of curves. Now by Proposition 5.1 \cite{CNg}, $\mbox{sBL}(-\Delta)$ is a union of exceptional curves $E$ for $\pi$ which must satisfy $(\Delta\cdot E)=0$. 

\end{proof}


\begin{Lemma}\label{Lemma11} Let $n_0$ be the smallest positive integer $n$ such that $n\Delta$ is an integral divisor. Then $\mbox{sBL}(-n_0\Delta)=\emptyset$ if and only if $R[\mathcal I]$ is a finitely generated $R$-algebra.
\end{Lemma}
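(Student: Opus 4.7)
The plan is to pass through the $D$-th Veronese subring $R[\mathcal I]^{(D)}=\bigoplus_{n\ge 0}\Gamma(X,\mathcal O_X(-nD\Delta))$ for a suitable positive multiple $D$ of $n_0$; this is the section ring of the invertible sheaf $L=\mathcal O_X(-D\Delta)$, and the lemma then reduces to the relative form of Zariski's theorem that the section ring of $L$ is finitely generated over $R$ if and only if some power of $L$ is globally generated. The stable base locus of an invertible sheaf is unchanged under passage to positive powers (a non-vanishing section $s$ has non-vanishing powers $s^k$), so $\mbox{sBL}(-n_0\Delta)=\mbox{sBL}(-D\Delta)$ for every positive multiple $D$ of $n_0$; in particular Lemma \ref{Lemma5} applies to $\mbox{sBL}(-n_0\Delta)$ and gives the dichotomy that this set is either empty or a finite union of exceptional curves, so it has no isolated points.

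For the implication $(\Leftarrow)$, assume $R[\mathcal I]$ is finitely generated, so $Z(\mathcal I)=\mbox{Proj}(R[\mathcal I])$ is projective, hence proper, over $\mbox{Spec}(R)$. Choosing $D$ divisible by $n_0$ so that $R[\mathcal I]^{(D)}$ is generated in degree one by finitely many $R$-module generators $s_1,\ldots,s_k$ of $I_D=\Gamma(X,L)$, I obtain a closed embedding $Z(\mathcal I)\cong \mbox{Proj}(R[\mathcal I]^{(D)})\hookrightarrow \mathbb{P}^{k-1}_R$ and a rational map $\phi:X\dashrightarrow Z(\mathcal I)$ with domain $X\setminus \mbox{BL}(\Gamma(X,L))$. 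Because $X$ is regular of dimension two and the target is proper, the valuative criterion of properness forces $\phi$ to extend across every codimension-one point of $X$, so $\mbox{BL}(\Gamma(X,L))$ has codimension $\ge 2$ in $X$, i.e.\ is zero-dimensional; the containment $\mbox{sBL}(-n_0\Delta)\subset \mbox{BL}(\Gamma(X,L))$ then makes the stable base locus zero-dimensional, and combined with the no-isolated-points dichotomy above this forces $\mbox{sBL}(-n_0\Delta)=\emptyset$.

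For the converse $(\Rightarrow)$, assume $\mbox{sBL}(-n_0\Delta)=\emptyset$; Lemma \ref{Lemma4} supplies $N\ge 1$ so that $L:=\mathcal O_X(-Nn_0\Delta)$ is globally generated. The sections of $L$ define a morphism $\psi:X\to \mathbb{P}^{k-1}_R$ with $\psi^{\ast}\mathcal O(1)=L$; let $Y$ be the scheme-theoretic image, a projective $R$-scheme with finitely generated graded coordinate ring $S$, and let $\mathcal B:=\psi_{\ast}\mathcal O_X$. Since $\psi$ is proper, $\mathcal B$ is a coherent sheaf of $\mathcal O_Y$-algebras. The projection formula gives $\Gamma(X,L^n)=\Gamma(Y,\mathcal B(n))$ for every $n\ge 0$, and Serre's correspondence on the Noetherian projective scheme $Y$ makes $R[\mathcal I]^{(Nn_0)}=\bigoplus_n\Gamma(Y,\mathcal B(n))$ into a finitely generated graded $S$-module; its natural graded $R$-algebra structure (inherited from the sheaf-of-algebras $\mathcal B$) contains $S$ because $\mathcal O_Y\hookrightarrow \mathcal B$ is injective ($Y$ being the scheme-theoretic image of $\psi$), so finite generation as $S$-module forces finite generation as $R$-algebra. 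To transfer finite generation from $R[\mathcal I]^{(Nn_0)}$ back to $R[\mathcal I]$, each $I_r$ with $0<r<Nn_0$ is a finitely generated $R$-module (as $R$ is Noetherian), and each generator $f\in I_r$ satisfies $f^{Nn_0}\in I_{rNn_0}\subset R[\mathcal I]^{(Nn_0)}$, so these finitely many elements exhibit $R[\mathcal I]$ as a module-finite integral extension of $R[\mathcal I]^{(Nn_0)}$, whence a finitely generated $R$-algebra.

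The main obstacle is the $(\Rightarrow)$ direction, which is a relative form of Zariski's theorem that semiample line bundles have finitely generated section rings; the projection-formula/Serre-correspondence argument handles it cleanly provided one is careful to install the ring structure of $\mathcal B$ onto the Serre module, after which the filtration-plus-integrality step transfers finite generation down from the Veronese to $R[\mathcal I]$. The $(\Leftarrow)$ direction is comparatively soft once the no-isolated-points dichotomy is extracted from Lemma \ref{Lemma5}.
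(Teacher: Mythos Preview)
Your forward direction $(\Rightarrow)$ follows essentially the same strategy as the paper---map to a projective $Y$ via the globally generated $L$, push forward, and invoke Serre finiteness---though your final step transferring finite generation from $R[\mathcal I]^{(Nn_0)}$ back to $R[\mathcal I]$ is incomplete: exhibiting the $Nn_0$-th power of each generator of $I_r$ inside $R[\mathcal I]^{(Nn_0)}$ shows integrality of those particular elements, but does not show they generate $R[\mathcal I]$ as an $R[\mathcal I]^{(Nn_0)}$-algebra (you would need $I_{qNn_0+r}$ to lie in the subring generated by the $I_s$ for $s<Nn_0$ together with $R[\mathcal I]^{(Nn_0)}$, which is not automatic for filtrations). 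The paper avoids this by treating each residue class $r$ separately, showing $\bigoplus_m\Gamma(X,\mathcal O_X(-(mNn_0+r)\Delta))=\bigoplus_m\Gamma(Y,\psi_*\mathcal O_X(-r\Delta)\otimes\mathcal O_Y(m))$ is a finitely generated $A$-module because $\psi_*\mathcal O_X(-r\Delta)$ is coherent. Your route can be rescued by invoking excellence: $R[\mathcal I]^{(Nn_0)}$, being finitely generated over the excellent ring $R$, is Nagata, so its integral closure in the finite extension $K(t)$ of its fraction field $K(t^{Nn_0})$ is module-finite, and $R[\mathcal I]$ sits inside this integral closure.

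The backward direction $(\Leftarrow)$ has a genuine gap. From the extension of the rational map $\phi$ across every codimension-one point you conclude that $\mbox{BL}(\Gamma(X,L))$ has codimension $\ge 2$, but this does not follow: the domain of definition of $\phi$ as a morphism is in general strictly larger than $X\setminus\mbox{BL}(\Gamma(X,L))$. At a codimension-one point $E$ in the base locus, the sections $s_1,\ldots,s_k$ still generate \emph{some} invertible sheaf at $E$ (namely $I_D\mathcal O_{X,E}$, which is principal since $\mathcal O_{X,E}$ is a DVR), so the map $[s_1:\cdots:s_k]$ extends across $E$; but the invertible sheaf so generated is $L\otimes\mathcal O_X(-aE)$ near $E$ for some $a>0$, not $L$ itself. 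Thus neither properness of $Z(\mathcal I)$ nor regularity of $X$ forces $E$ out of the base locus. The paper's argument here is genuinely different and uses an input you have not replaced: from $R[\mathcal I]^{(D)}$ being generated in degree one it gets $v_E(I_{nD})=n\,v_E(I_D)$, so the excess $v_E(I_{nD})-v_E(nD\Delta)=na$ grows linearly in $n$; this contradicts the uniform bound on that excess supplied by Proposition~5.3 of \cite{CNg}, which relies on nefness of $-\Delta$. That boundedness result is the missing ingredient.
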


\begin{proof} Suppose that $\mbox{sBL}(-n_0\Delta)=\emptyset$. Then, by Lemma \ref{Lemma4}, there exists $n_1>0$ such that $\mbox{BL}(\Gamma(X,\mathcal O_X(-nn_0n_1\Delta)))=\emptyset$ for $n\ge 1$. Thus $\Gamma(X,\mathcal O_X(-n_0n_1\Delta))$ determines an $R$-morphism
$
\psi_{n_0n_1}:X\rightarrow Y=\mbox{Proj}(T)
$
where $T$ is the graded algebra $R=R[t\Gamma(X,\mathcal O_X(-n_0n_1\Delta))]$. 
Since $X$ is the blowup of an ideal $J$ in $R$, $X$ is the blowup of the ideal sheaf $J\mathcal O_Y$ in $Y$, so  $\psi_{n_0n_1}$ is a projective morphism.
Let $\mathcal O_Y(1)$ be the very ample divisor on $Y$ determined by the sheafification of the graded $T$-module $T(1)$. Then $\phi_{n_0n_1}^*(\mathcal O_Y(n))=\mathcal O_X(-nn_0n_1\Delta)$ for $n\in \ZZ$. Now 
$$
\Gamma(X,\mathcal O_X(-n\Delta))=\Gamma(Y,(\phi_{n_0n_1})_*\mathcal O_X(-n\Delta))
$$
for all $n\ge 0$ and 
$$
(\phi_{n_0n_1})_*\mathcal O_X(-n\Delta)\cong ((\phi_{n_0n_1})_*\mathcal O_X(-r\Delta))\otimes\mathcal O_Y(m)
$$
where $n=mn_0n_1+r$ with $0\le r<n_0n_1$ by the projection formula (c.f. Exercise II.5.1 (d) of \cite{H}). Further, the sheaves $(\phi_{n_0n_1})_*\mathcal O_X(-r\Delta)$ are coherent $\mathcal O_Y$-modules by Theorem III.8.8 \cite{H} since $\psi_{n_0n_1}$ is a projective morphism.

Now 
$A:=\oplus_{n\ge 0}\Gamma(Y,\mathcal O_Y(n))$ is a finitely generated $R[t\Gamma(X,\mathcal O_X(-n_0n_1\Delta))]$ module, and hence is a finitely generated $R$-algebra by Theorem 11.47 \cite{AG} (or as established in the course of  the proof of Theorem II.5.19 \cite{H}). 

We will now show that if $m\in \ZZ$, then $\oplus_{n\ge 0}\Gamma(Y,\mathcal O_Y(m+n))$ is a finitely generated $A$-module. Suppose that $m\ge 0$. Then $\oplus_{n\ge 0}\Gamma(Y,\mathcal O_Y(m+n))\subset A$  is an ideal which is thus a finitely generated $A$-module. Further, 
$$
\oplus_{n\ge 0}\Gamma(Y,\mathcal O_Y(n-m))
=\Gamma(Y,\mathcal O_Y(-m))\oplus \cdots\oplus\Gamma(Y,\mathcal O_Y(-1))\oplus A
$$
is a finitely generated $A$-module since $\Gamma(Y,\mathcal O_Y(i))$ is a finitely generated $R$-module for all $i$ (by Theorem III.8.8 \cite{H} or by Theorem II.5.2 \cite{H}). 

More generally, suppose that $\mathcal F$ is a coherent $\mathcal O_Y$-module. We will show that $\oplus_{n\ge 0}\Gamma(Y,\mathcal F(n))$ is a finitely generated $A$-module. To prove this, first observe that by Corollary II.5.18 \cite{H}, there exists a short exact sequence of coherent sheaves on $Y$
$$
0\rightarrow\mathcal K\rightarrow \oplus_{i=1}^s\mathcal O_Y(m_i)\rightarrow\mathcal F\rightarrow 0
$$ 
for some $m_i\in \ZZ$,
giving an exact sequence
$$
\oplus_{i=1}^s\left(\oplus_{n\ge 0}\Gamma(Y,\mathcal O_Y(n+m_i))\right)\rightarrow \oplus_{n\ge 0}\Gamma(Y,\mathcal F(n))
\rightarrow \oplus_{n\ge 0}H^1(Y,\mathcal K(n)).
$$
Now $H^1(Y,\mathcal K(n))$ are finitely generated $R$-modules and $H^1(Y,\mathcal K(n))=0$ for $n\gg 0$ by Theorem II.5.2 \cite{H}. Thus $\oplus_{n\ge 0}\Gamma(Y,\mathcal F(n))$ is a finitely generated $A$-module. 

In particular,
$$
\oplus_{n\ge 0}\Gamma(X,\mathcal O_X(-n\Delta))=\oplus_{r=0}^{n_0n_1-1}\left(\oplus_{n\ge 0} \Gamma(Y,(\psi_{n_0n_1})_*\mathcal O_X(-r\Delta)\otimes\mathcal O_Y(n))\right)
$$
is a finitely generated $A$-module and hence a finitely generated $R$-algebra since $(\psi_{n_0n_1})_*\mathcal O_X(-r\Delta)$ are coherent $\mathcal O_Y$-modules, so $R[\mathcal I]$ is a finitely generated $R$-algebra. 

Now suppose that $R[\mathcal I]$ is a finitely generated $R$-algebra, but $\mbox{sBL}(-n_0\Delta)\ne\emptyset$. We will derive a contradiction. There exists a positive integer $n_1$ such that $\mbox{BL}(\Gamma(X,\mathcal O_X(-nn_0n_1\Delta)))=\mbox{sBL}(-n_0n_1\Delta)$ for all $n\ge 1$ and the Veronese algebra $R[\mathcal I]^{(n_0n_1)}$ is generated in degree 1.
With our assumptions, there exists an exceptional curve $E$ of $\pi$ such that $E\subset \mbox{sBL}(-n_0\Delta)$ by Lemma \ref{Lemma5},  so that
\begin{equation}\label{eq60}
\Gamma(X,\mathcal O_X(-nn_0n_1\Delta))\mathcal O_{X,E}=\Gamma(X,\mathcal O_X(-n_0n_1\Delta))^n\mathcal O_{X,E}
\end{equation}
for all $n\ge 1$.  Since $E\subset \mbox{sBL}(-n_0\Delta)$, we have that  $E\subset \mbox{BL}(\Gamma(X,\mathcal O_X(-nn_0)))$ for all $n>0$. Thus 
$$
\Gamma(X,\mathcal O_X(-n_0n_1\Delta))\mathcal O_{X,E}=t^a\mathcal O_X(-n_0n_1\Delta)_E
$$
for some $a>0$ where $t$ is a generator of the maximal ideal of $\mathcal O_{X,E}$. Thus
$$
\Gamma(X,\mathcal O_X(-nn_0n_1\Delta)\mathcal O_{X,E}=t^{na}\mathcal O_X(-nn_0n_1\Delta)\mathcal O_{X,E}
$$
for all $n\ge 1$ by (\ref{eq60}). Now since $-n_0\Delta$ is nef, by Proposition 5.3 \cite{CNg}, there exists $b>0$ such that 
$$
\Gamma(X,\mathcal O_X(-nn_0n_1\Delta))\mathcal O_{X,E}=t^{b_n}\mathcal O_X(-nn_0n_1\Delta)\mathcal O_{X,E}
$$
where $b_n\le b$ for all $n>0$. Thus $0<na\le b$ for all $n>0$, which is impossible. Thus $\mbox{sBL}(-n_0\Delta)=\emptyset$.
\end{proof}

\begin{Lemma}\label{Lemma2} Let $n_0$ be the smallest positive integer $n$ such that $n\Delta$ is an integral divisor. Suppose that $F$ is an exceptional curve for $\pi$, $F\not\subset \mbox{sBL}(-n_0\Delta)$ and $F\cap\mbox{sBL}(-n_0\Delta)\ne\emptyset$. Then $(\Delta\cdot F)<0$.
\end{Lemma}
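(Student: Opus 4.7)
The plan is to argue by contradiction, using the Zariski-like decomposition and global generation facts that were already exploited in the proof of Lemma \ref{Lemma5}.

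First I would reduce to the case $n_0 = 1$ by replacing $\Delta$ by $n_0\Delta$, so that $\Delta$ becomes an integral divisor and $\mbox{BL}(\Gamma(X,\mathcal O_X(-n\Delta))) = \mbox{sBL}(-\Delta)$ for every $n \geq 1$. Since $-\Delta$ is nef, we already have $(\Delta\cdot F)\le 0$, so it suffices to rule out $(\Delta\cdot F)=0$; I will assume $(\Delta\cdot F)=0$ and derive a contradiction.

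Next I would recall the decomposition used inside the proof of Lemma \ref{Lemma5}: write $-\Delta = -M + B$ with $M,B$ effective, $\mbox{Supp}(B)=\mbox{sBL}(-\Delta)$, $\Gamma(X,\mathcal O_X(-M))=\Gamma(X,\mathcal O_X(-\Delta))$, and $\Gamma(X,\mathcal O_X(-M))\mathcal O_X = \mathcal K\,\mathcal O_X(-M)$ where $\mathcal O_X/\mathcal K$ has finite support. By Proposition 5.2 of \cite{CNg} some multiple $\mathcal O_X(-nM)$ is generated by its global sections. Restricting the surjection $\mathcal O_X^{\oplus N}\twoheadrightarrow \mathcal O_X(-nM)$ to the projective curve $F$ (projective over $R/m_R$, by Section \ref{SecRR}), we get that $\mathcal O_X(-nM)\otimes\mathcal O_F$ is globally generated, hence has non-negative degree, so $(nM\cdot F)\le 0$ and therefore $(M\cdot F)\le 0$.

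Finally I would compute $(M\cdot F)$ from the other direction. Since $M = \Delta + B$, we have $(M\cdot F) = (\Delta\cdot F) + (B\cdot F) = (B\cdot F)$ under our assumption $(\Delta\cdot F)=0$. The divisor $B$ is a positive combination of the exceptional curves $E_i\subset\mbox{sBL}(-\Delta)$; the hypothesis $F\not\subset\mbox{sBL}(-\Delta)$ says $F$ is distinct from every such $E_i$, while $F\cap\mbox{sBL}(-\Delta)\ne\emptyset$ forces $F$ to meet some $E_i$. On a nonsingular surface two distinct irreducible curves meeting at a point have strictly positive intersection number there, so $(E_i\cdot F)>0$ and hence $(B\cdot F)>0$. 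This contradicts $(M\cdot F)\le 0$, completing the proof.

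The only delicate point — really the main obstacle — is making sure the positivity $(B\cdot F)>0$ is justified from $F\cap\mbox{sBL}(-\Delta)\ne\emptyset$, for which one uses that all components of $B$ are exceptional curves of $\pi$ (by Lemma \ref{Lemma5}) and that $E_1+\cdots+E_r$ was arranged to be a simple normal crossings divisor in Section \ref{SecRes}.
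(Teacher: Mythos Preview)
Your argument is correct, but it is not the route the paper takes. The paper gives a shorter, self-contained proof that avoids both the decomposition $-\Delta=-M+B$ and Proposition~5.2 of \cite{CNg}: after reducing to $n_0=1$ and assuming $(\Delta\cdot F)=0$, one simply picks $n$ and $s\in\Gamma(X,\mathcal O_X(-n\Delta))$ whose restriction $\sigma(s)$ to $F$ is nonzero (such $n,s$ exist because $F\not\subset\mbox{sBL}(-\Delta)$). Since $\mathcal L=\mathcal O_X(-n\Delta)\otimes\mathcal O_F$ has degree $-n(\Delta\cdot F)=0$, the effective divisor $(\sigma(s))$ on $F$ has degree $0$, hence is the zero divisor; so $\sigma(s)$ is nowhere vanishing on $F$, which forces $F\cap\mbox{sBL}(-\Delta)=\emptyset$, a contradiction. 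Your approach reaches the same numerical contradiction by passing through the auxiliary divisor $M$ and the global generation of $\mathcal O_X(-nM)$; this works, but it imports machinery (the $M,B$ decomposition and an outside proposition) that the paper's direct restriction-to-$F$ argument does not need.

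Two small comments on your write-up. First, replacing $\Delta$ by $n_0\Delta$ (with $n_0$ as in the statement of this lemma) only makes $\Delta$ integral; it does not automatically give $\mbox{BL}(\Gamma(X,\mathcal O_X(-n\Delta)))=\mbox{sBL}(-\Delta)$ for all $n$. You need that stabilization to guarantee $F$ is not a component of $B$ (otherwise $(B\cdot F)$ could pick up a negative self-intersection term), so you should invoke Lemma~\ref{Lemma4} and pass to a further multiple. Second, the positivity $(E_i\cdot F)>0$ for distinct irreducible curves meeting at a point holds on any nonsingular surface; the simple normal crossings arrangement from Section~\ref{SecRes} is not actually needed here.
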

\begin{proof} After replacing $\Delta$ with $n_0\Delta$, we may assume that $n_0=1$.
Assume that $(\Delta\cdot F)=0$. We will derive a contradiction. Since $F\not\subset \mbox{sBL}(-\Delta)$, there exists $n$ such that the image of the natural map $\sigma:\Gamma(X,\mathcal O_X(-n\Delta))\rightarrow\Gamma(F,\mathcal O_X(-n\Delta)\otimes\mathcal O_F)$ is nonzero, so there exists $s\in \Gamma(X,\mathcal O_X(-n\Delta))$ such that $\sigma(s)\ne 0$. Let $\mathcal L=\mathcal O_X(-n\Delta)\otimes_{\mathcal O_X}\mathcal O_F$. 
We have that $\deg(\mathcal L)=(-n\Delta\cdot F)=0$.
The divisor $(\sigma(s))$ of $\sigma(s)$ on $F$ is the effective divisor on $F$ such that $\sigma(s)\mathcal O_F=\mathcal O_F(-(\sigma(s))\mathcal L$. Thus $\mbox{deg}(\sigma(s))=\deg\mathcal L=0$, and so
 $(\sigma(s))$ is the zero divisor since $(\sigma(s))$ is an effective divisor. 
Thus $\sigma(s)$ does not vanish on $F$ and so $F\cap\mbox{sBL}(-\Delta)=\emptyset$, a contradication.
\end{proof}

\section{Relationship with analytic spread}\label{SecAS}

After replacing $\Delta$ with a multiple of $\Delta$ we may assume that $\Delta$ is an integral divisor and $\mbox{BL}(\Gamma(X,\mathcal O_X(-n\Delta)))=\mbox{sBL}(-\Delta)$ for all $n\ge 1$.  We use the notation of Proposition \ref{Prop1}, associating to an exceptional divisor $E$ of $\pi$ the prime ideal $P_E=\oplus_{n\ge 0}\Gamma(X,\mathcal O_X(-n\Delta-E))$ of $R[\mathcal I]$.

\begin{Proposition}\label{Prop5} Suppose that $\mbox{sBL}(-\Delta)\ne\emptyset$ and $F$ is an exceptional curve of $\pi$ such that $F$ is not contained in $\mbox{sBL}(-\Delta)$. Then there exists a prime exceptional divisor $G$ of $X$ (possibly equal to $F$) such that $P_G\subset P_F$ and $\dim R[\mathcal I]/P_G=2$.
\end{Proposition}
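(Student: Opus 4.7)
The plan is to split on the sign of $(\Delta\cdot F)$, which is $\le 0$ since $-\Delta$ is nef. After replacing $\Delta$ by a suitable positive multiple I may assume that $\Delta$ is integral and $\mbox{BL}(\Gamma(X,\mathcal{O}_X(-n\Delta)))=\mbox{sBL}(-\Delta)$ for all $n\ge 1$, as was done at the start of Section \ref{SecAS}. If $(\Delta\cdot F)<0$, take $G=F$: Proposition \ref{Prop2}(1) immediately gives $\dim R[\mathcal{I}]/P_F=2$, and $P_F\subset P_F$ is trivial. So assume henceforth that $(\Delta\cdot F)=0$; then Lemma \ref{Lemma2}, combined with $F\not\subset\mbox{sBL}(-\Delta)$, forces $F\cap\mbox{sBL}(-\Delta)=\emptyset$.

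Consider the Grothendieck map $\phi\colon U:=X\setminus\mbox{sBL}(-\Delta)\to Z(\mathcal{I})=\mbox{Proj}(R[\mathcal{I}])$ described in the introduction. Since $F\subset U$ and $\mathcal{O}_X(-\Delta)|_F$ is a globally generated line bundle of degree $(-\Delta\cdot F)=0$ on the integral proper curve $F$, it must be trivial---any nonzero global section has an effective zero-divisor of degree $0$, hence is nowhere vanishing---so $\phi$ contracts $F$ to a single closed point $q_F\in Z(\mathcal{I})$. A direct computation in a local trivialisation at the generic point of any exceptional curve $E$ with $E\not\subset\mbox{sBL}(-\Delta)$ identifies the homogeneous prime of $R[\mathcal{I}]$ corresponding to $\phi(\eta_E)$ with $P_E$; in particular the prime of $R[\mathcal{I}]$ corresponding to $q_F$ is $P_F$, so containments $P_G\subset P_F$ correspond to $q_F\in\overline{\{\phi(\eta_G)\}}$ in $Z(\mathcal{I})$.

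Let $\mathcal{T}$ be the set of exceptional curves $E$ with $E\subset U$ and $\phi(E)=\{q_F\}$; then $F\in\mathcal{T}$, and each $E\in\mathcal{T}$ has $(\Delta\cdot E)=0$. Because $\mbox{sBL}(-\Delta)\ne\emptyset$ contains an exceptional curve by Lemma \ref{Lemma5}, $\bigcup_{E\in\mathcal{T}}E$ is a proper closed subset of the connected set $\pi^{-1}(m_R)$, so connectedness of the dual graph of exceptional curves yields an exceptional curve $G\notin\mathcal{T}$ meeting some $E\in\mathcal{T}$ at a point $p$. Since $E\cap\mbox{sBL}(-\Delta)=\emptyset$, we have $p\notin\mbox{sBL}(-\Delta)$ and in particular $G\not\subset\mbox{sBL}(-\Delta)$; and if $(\Delta\cdot G)=0$ then $\phi$ would contract $G$ to the single point $\phi(p)=q_F$, placing $G$ in $\mathcal{T}$, a contradiction. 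Hence $(\Delta\cdot G)<0$, so $\phi(G)$ is an irreducible curve in $Z(\mathcal{I})$ containing $q_F=\phi(p)$, and the generic point of $\phi(G)$ (corresponding to $P_G$) specializes to $q_F$, i.e.\ $P_G\subset P_F$; Proposition \ref{Prop2}(1) then gives $\dim R[\mathcal{I}]/P_G=2$. The main obstacle is locating the curve $G$ correctly: naive prime avoidance applied to $\sqrt{m_RR[\mathcal{I}]}\subset P_F$ only produces some $P_{E_j}\subset P_F$ with no control over $\dim R[\mathcal{I}]/P_{E_j}$, and the argument crucially needs both the contraction behaviour of $\phi$ on curves with vanishing intersection and the connectedness of $\pi^{-1}(m_R)$ to pick out a neighbour of the contraction locus of $F$ whose intersection with $\Delta$ is strictly negative.
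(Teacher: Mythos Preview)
Your argument is correct, but it takes a different route from the paper's. Both proofs dispose of the case $(\Delta\cdot F)<0$ via Proposition~\ref{Prop2} and then use Lemma~\ref{Lemma2} and the connectedness of $\pi^{-1}(m_R)$; the divergence is in how the inclusion $P_G\subset P_F$ is established. The paper works entirely inside $R[\mathcal I]$: it builds a path of exceptional curves $E_0,\ldots,E_r=F$ with $E_0$ meeting $\mbox{sBL}(-\Delta)$, locates the last index $s$ with $(E_s\cdot\Delta)<0$, introduces the auxiliary point-ideals $Q_i=\oplus_n\Gamma(X,\mathcal O_X(-n\Delta)\otimes\mathcal I_{q_i})$ at the intersection points $q_i\in E_i\cap E_{i-1}$, and uses the dimension bound of Proposition~\ref{Prop2} to force $P_r=Q_r=P_{r-1}=\cdots=P_{s+1}=Q_{s+1}\supset P_s$. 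Your approach instead invokes the Grothendieck map $\phi:X\setminus\mbox{sBL}(-\Delta)\rightarrow Z(\mathcal I)$, shows that curves with $(\Delta\cdot E)=0$ lying in $U$ are contracted, and then reads off $P_G\subset P_F$ from the specialisation $\phi(p)\in\overline{\{\phi(\eta_G)\}}$.

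What each buys: the paper's argument is self-contained at this point in the exposition and avoids constructing $\phi$, which is only developed systematically in Section~\ref{SecNoeth}; the chain of equalities $P_{s+1}=\cdots=P_r$ also makes explicit that all the intermediate $P_{E_i}$ coincide with $P_F$, a fact your contraction picture encodes implicitly. Your version is more geometric and arguably cleaner once $\phi$ is in hand, and it makes transparent why the desired $G$ must sit on the boundary of the contracted locus. There is no circularity in your use of $\phi$, since its construction is standard and does not rely on Proposition~\ref{Prop5}; the identification of $\phi(\eta_E)$ with $P_E$ that you call a ``direct computation'' is indeed routine, and your claim that every $E\in\mathcal T$ has $(\Delta\cdot E)=0$ follows because $\phi(\eta_E)=q_F$ forces $P_E=P_F$, whence $\dim R[\mathcal I]/P_E\le 1$ and Proposition~\ref{Prop2} rules out $(\Delta\cdot E)<0$.
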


\begin{proof}  If $(F\cdot \Delta)<0$ then $\dim R[\mathcal I]/P_F=2$ by  Proposition \ref{Prop2}.  In particular, by Lemma \ref{Lemma2}, we may assume that $F$ is disjoint from $\mbox{sBL}(-\Delta)$. 
 Suppose that $(F\cdot\Delta)=0$.
Since $\pi^{-1}(m_R)$ is connected (by Corollary III.11.4 \cite{H}), there exist exceptional curves $E_0,\ldots,E_r$ of $\pi$ which are not contained in $\mbox{sBL}(-\Delta)$ such that $F=E_r$, $E_i\cap \mbox{sBL}(-\Delta)=\emptyset$ for $1\le i\le r$, $E_0\cap\mbox{SBL}(-\Delta)\ne\emptyset$  and $E_i\cap E_{i-1}\ne\emptyset$ for $1\le i\le r$.
Let $q_i\in E_i\cap E_{i-1}$ for $1\le i\le r$. Let $s$ be the largest  integer with $0\le s< r$ such that $(E_s\cdot\Delta)<0$ ($s$ exists by Lemma \ref{Lemma2}). Then $(E_i\cdot\Delta)=0$ and the $E_i$ are disjoint from $\mbox{sBL}(-\Delta)$ for $s<i\le r$. Thus the point $q_i\not\in \mbox{sBL}(-\Delta)$ for $s<i\le r$, and so $\Gamma(X,\mathcal O_X(-n\Delta)\otimes\mathcal I_{q_i})\ne\Gamma(X,\mathcal O_X(-n\Delta))$ for $s<i\le r$ and $n>0$  where $\mathcal I_{q_i}$ is the ideal sheaf of the point $q_i$ in $\mathcal O_X$.
Let $Q_i:=\oplus_{n\ge 0}\Gamma(X,\mathcal O_X(-n\Delta)\otimes\mathcal I_{q_i})$, $P_{i}=P_{E_i}=\oplus_{n\ge 0}\Gamma(X,\mathcal O_X(-n\Delta-E_{i}))$. $P_i\subset Q_{i}$ are homogeneous prime ideals in $R[\mathcal I]$ which are in $Z(\mathcal I)=\mbox{Proj}(R[\mathcal I])$ and contain $\sqrt{m_RR[\mathcal I]}$. Thus $\dim R[\mathcal I]/Q_i>0$.
Now $(E_{i}\cdot\Delta)=0$ for $s<i\le r$ so $\dim S/P_{i}\le 1$ for $s<i\le r$ by Proposition \ref{Prop2}, and so $\dim R[\mathcal I]/Q_i=1$ for $s<i\le r$ and  $Q_i= P_{i}$ for $s<i\le r$.  We thus have a chain of inclusions
$$
P_r=Q_{r}\supset P_{r-1}=Q_{r-1}\supset \cdots \supset P_{s+1}=Q_{s+1}.
$$
Now $P_r\supset Q_{s+1}$ and $1=\dim R/P_r=\dim R/Q_{s+1}$ so $P_r=Q_{s+1}\supset P_s$ and $\dim R[\mathcal I]/P_s=2$ by Proposition \ref{Prop2}.
\end{proof}



\subsection{Proof of Theorem \ref{Theorem2}}


Suppose that $\ell(\mathcal I)=0$. Then $\tau^{-1}(m_R)=\emptyset$. If $R[\mathcal I]$ is a finitely generated $R$-algebra, then $\tau:Z(\mathcal I)\rightarrow \mbox{Spec}(R)$ is projective and hence proper, so it is surjective and $\pi^{-1}(m_R)\ne\emptyset$, giving a contradiction. Thus $R[\mathcal I]$ is not a finitely generated $R$-algebra. By Proposition \ref{Prop1}, $P_E\not\in Z(\mathcal I)$ for all exceptional curves $E$ of $\pi$. Since $P_E\cap R=\{f\in R\mid v_E(f)>0\}$ and $E$ contracts to the maximal ideal $m_R$ of $R$, we have that $P_E=m_R\oplus R[\mathcal I]_+$ for all exceptional curves $E$ of $\pi$ and so  $\sqrt{m_RR[\mathcal I]}=m_R\oplus R[\mathcal I]_+$.

Suppose that $\ell(\mathcal I)=1$. We will show that then $\mbox{sBL}(-\Delta)=\emptyset$. Suppose that $\mbox{sBL}(-\Delta)\ne \emptyset$. By Lemma \ref{Lemma5}, there exists an exceptional curve $F$ which is contained in $\mbox{sBL}(-\Delta)$. Since $\ell(\mathcal I)=1$, by Proposition \ref{Prop1} and Proposition \ref{Prop2} we have that $(E\cdot\Delta)=0$ for all exceptional curves $E$ of $\pi$, so  since $\pi^{-1}(m_R)$ is connected by Corollary III.11.4 \cite{H} and     by Lemma \ref{Lemma2}, $\pi^{-1}(m_R)\subset \mbox{sBL}(-\Delta)$. 
Thus for all exceptional curves $E$ of $\pi$, $\Gamma(X,\mathcal O_X(-n\Delta-E))=\Gamma(X,\mathcal O_X(-n\Delta))=I_n$ for all $n\ge 1$ and so 
$P_E\not\in Z(\mathcal I)$, so $\ell(\mathcal I)=0$ by Proposition \ref{Prop1}, giving a contradiction. Thus  $\ell(\mathcal I)=1$ implies $\mbox{sBL}(-\Delta)=\emptyset$. By Lemma \ref{Lemma11}, $R[\mathcal I]$ is a finitely generated $R$-algebra and so $\tau:Z(\mathcal I)\rightarrow \mbox{Spec}(R)$ is a projective birational morphism. 
Since $R$ is normal, $\tau^{-1}(m_R)$ is connected by Corollary III.11.4 \cite{H}. But $\tau^{-1}(m_R)$ is a finite union of closed points since
$\dim \tau^{-1}(m_R)=\ell(\mathcal I)-1=0$. Thus by Proposition \ref{Prop1}, $\sqrt{m_RR[\mathcal I]}$ is a prime ideal $P_E$ in $R[\mathcal I]$ such that $\dim R[\mathcal I]/P_E=1$.

Suppose that $\ell(\mathcal I)=2$. First suppose that $\mbox{sBL}(-\Delta)=\emptyset$. Then $R[\mathcal I]$ is a finitely generated $R$-algebra by Lemma \ref{Lemma11}, so that $\tau$ is a projective birational morphism. Since $R$ is normal, $\tau^{-1}(m_R)$ is connected by Corollary III.11.4 \cite{H}. Since $\dim \tau^{-1}(m_R)=\ell(\mathcal I)-1=1$, $\tau^{-1}(m_R)$ is a (connected) union of curves. Now suppose that $\mbox{sBL}(-\Delta)\ne \emptyset$. If $E$ is an exceptional curve of $\pi$ and $E\subset \mbox{sBL}(-\Delta)$, then $P_E=m_R\oplus R[\mathcal I]_+$ since $E\subset \mbox{BL}(\Gamma(X,\mathcal O_X(-n\Delta)))$ for all $n\ge 1$. By Proposition \ref{Prop1} and Proposition \ref{Prop5}, $\sqrt{m_RR[\mathcal I]}=\cap P_{E_i}$ where the intersection is over the prime ideals $P_{E_i}$ such that 
$\dim R[\mathcal I]/P_{E_i}=2$. 

\begin{Remark}\label{Thm2RM} Distinct exceptional curves $E$ of $\pi$ such that $(E\cdot\Delta)<0$ (which are the exceptional curves $E$ such that $\dim R[\mathcal I]/P_E=2$ by Proposition \ref{Prop2}) have distinct prime ideals $P_E$ in $R[\mathcal I]$. In particular, the prime ideals $P_{E_i}$ such that $\dim R/P_{E_i}=2$ of the intersection $\sqrt{m_RR[\mathcal I]}=\cap P_{E_i}$
in the case $\ell(\mathcal I)=2$ of Theorem \ref{Theorem2} are all distinct.  We could give a direct proof of this now, but since we will not need this fact, and it follows from Theorem \ref{Theorem6}, we will not give a separate proof here, but refer to Theorem \ref{Theorem6} and the commentary after it. 
\end{Remark}

\section{an invariant}\label{SecInv}
Let $v$ a valuation of $K$ which is nonnegative on $R$. The facts that
$I_m^n\subset I_{mn}$ and $v(I_m^n)=nv(I_m)$ imply
$$
\frac{v(I_{mn})}{mn}\le \frac{nv(I_m)}{mn}=\frac{v(I_m)}{m},
$$
and so 
\begin{equation}\label{eq2}
\frac{v(I_{mn})}{mn}\le
\min\{\frac{v(I_m)}{m},\frac{v(I_n)}{n}\}.
\end{equation}
We define
$$
\gamma_v(\mathcal I)=\inf\{\frac{v(I_n)}{n}\}.
$$
where the infimum is over all positive $n$. This invariant is also defined earlier in Subsection \ref{SubIn} of the introduction.

By (\ref{eq2}), if $\frac{v(I_m)}{m}=\gamma_v(\mathcal I)$ for some $m$, then
$$
\frac{v(I_{mn})}{mn}=\gamma_v(\mathcal I)\mbox{ for all $n>0$.}
$$

\begin{Proposition}\label{Prop6} Suppose that $v$ is a valuation of $K$ which is nonnegative on  $R$. Then $v$ has a center on $Z(\mathcal I)$ if and only if there exists a positive integer $m$ such that 
$$
\frac{v(I_m)}{m}=\gamma_v(\mathcal I).
$$
\end{Proposition}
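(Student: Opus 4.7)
The plan is to prove the equivalence by two implications, each amounting to a direct computation with the homogeneous localization $R[\mathcal I]_{(H)}$ for a well chosen homogeneous element $H$ of positive degree. Throughout, I would use Lemma \ref{LemmaTau}, which guarantees that $Z(\mathcal I)$ is integral and birational to $\mbox{Spec}(R)$, so that its function field is $K$ and any dominating valuation ring is a subring of $K$ containing the local rings of $Z(\mathcal I)$.

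For the forward direction, suppose $v$ has a center on $Z(\mathcal I)$ corresponding to a homogeneous prime $P \subset R[\mathcal I]$ with $P \not\supset R[\mathcal I]_+$, so that $R[\mathcal I]_{(P)} \subset \mathcal O_v$. Choose a homogeneous element $H = h t^m \in R[\mathcal I]_+ \setminus P$ with $h \in I_m$. Then $R[\mathcal I]_{(H)} \subset R[\mathcal I]_{(P)} \subset \mathcal O_v$, and for any $f \in I_n$ the element $(ft^n)^m/H^n = f^m/h^n$ lies in $R[\mathcal I]_{(H)}$, hence in $\mathcal O_v$. This gives $m v(f) \ge n v(h)$, i.e.\ $v(f)/n \ge v(h)/m$ for every positive $n$ and every $f \in I_n$. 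Taking the infimum yields $\gamma_v(\mathcal I) \ge v(h)/m$, while trivially $v(h)/m \ge v(I_m)/m \ge \gamma_v(\mathcal I)$, so equality holds throughout and in particular $v(I_m)/m = \gamma_v(\mathcal I)$.

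For the reverse direction, assume $v(I_m)/m = \gamma_v(\mathcal I)$. Pick $h \in I_m$ with $v(h) = v(I_m)$ and set $H = h t^m$. Every element of $R[\mathcal I]_{(H)}$ has the form $g'/h^k$ with $g' \in I_{km}$, and since $v(g') \ge v(I_{km}) \ge (km)\gamma_v(\mathcal I) = k v(I_m) = k v(h)$ we obtain $v(g'/h^k) \ge 0$. Viewing $R[\mathcal I]_{(H)}$ as a subring of $K$ via the chain $R[\mathcal I] \subset R[t] \subset K[t,t^{-1}]$ (where $H$ is invertible because $h \in K^\times$ and $t^{m}$ becomes a unit), this shows $R[\mathcal I]_{(H)} \subset \mathcal O_v$. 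Then $P := m_v \cap R[\mathcal I]_{(H)}$ is a prime ideal defining a point of the affine open $D_+(H) \subset Z(\mathcal I)$, and by construction $(R[\mathcal I]_{(H)})_P$ is dominated by $\mathcal O_v$, giving a center of $v$ on $Z(\mathcal I)$.

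The main technical point to get right is the identification of $R[\mathcal I]_{(H)}$ as a subring of $K$ rather than of some ambient overring, so that the valuation $v$ makes sense on it and the numerical inequalities translate faithfully into domination statements. Once this is set up, the rest of the argument reduces to the submultiplicative inequality $v(I_{mn})/(mn) \le v(I_n)/n$ and the fact that $\gamma_v(\mathcal I)$ is an infimum.
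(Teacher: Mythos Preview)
Your proof is correct and follows essentially the same approach as the paper's. Both directions use the same key computation with the homogeneous localization $R[\mathcal I]_{(H)}$ for $H=ht^m$; the only cosmetic difference is that the paper argues the forward implication by contradiction (assuming the infimum is never attained and producing an element of $R[\mathcal I]_{(F)}$ with negative value), whereas you prove directly that the infimum is attained at the degree $m$ of any homogeneous element avoiding $P$.
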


\begin{proof} First suppose that there does not exist $m$ such that $\frac{v(I_m)}{m}=\gamma_v(\mathcal I)$ but $v$ does have a center on $Z(\mathcal I)$. Then there exists a homogeneous prime ideal $P$ in $R[\mathcal I]$ such that $\oplus_{n>0}I_n\not \subset P$ and $R[\mathcal I]_{(P)}$ is dominated by $\mathcal O_v$.  Thus, there exists $F\in I_m$ for some $m$ such that $0\ne Ft^m\not\in P$, and $R[\mathcal I]_{(F)}\subset R[\mathcal I]_{(P)}\subset \mathcal O_v$. By assumption, there exists $n$ such that $
\frac{v(I_n)}{n}<\frac{v(I_m)}{m}$. So $\frac{v(I_{mn})}{mn}<\frac{v(I_m)}{m}$ by (\ref{eq2}). So there exists an  element $H\in I_{mn}$ such that 
$\frac{v(H)}{mn}=\frac{v(I_{mn})}{mn}<\frac{v(I_m)}{m}$. Since $v(F)\ge v(I_m)$, this implies that $v(\frac{H}{F^n})<0$, so that $\frac{H}{F^n}\not\in \mathcal O_v$. But then
$$
\frac{H}{F^n}=\frac{Ht^{mn}}{(Ft^m)^n}\in R[\mathcal I]_{(F)}
$$
is not in $\mathcal O_v$, a contradiction. Thus if there does not exist $m$ such that $\frac{v(I_m)}{m}=\gamma_v(\mathcal I)$ then $v$ doesn't have a center on $Z(\mathcal I)$.

Now suppose that there exists $m_0$ such that $\frac{v(I_{m_0})}{m_0}=\gamma_v(\mathcal I)$. Then by (\ref{eq2}),
$$
\frac{v(I_{nm_0})}{nm_0}\le \frac{v(I_{m_0})}{m_0}=\gamma_v(\mathcal I)
$$
implies
$$
\frac{v(I_{nm_0})}{nm_0}=\gamma_v(\mathcal I)
$$
for $n\ge 1$.
There exists $F\in I_{m_0}$ such that $v(F)=v(I_{m_0})=m_0\gamma_v(\mathcal I)$. Suppose that $x\in R[\mathcal I]_{(Ft^{m_0})}$. Then for some $n>0$
and $G\in I_{m_0n}$,
$$
x=\frac{Gt^{nm_0}}{(Ft^{m_0})^n}=\frac{G}{F^n}.
$$
 So
$$
\frac{v(x)}{nm_0}=\frac{v(G)-nv(F)}{nm_0}\ge \frac{v(I_{nm_0})-nv(F)}{nm_0}=0.
$$
Thus $x\in \mathcal O_v$ and so $R[\mathcal I]_{(Ft^{m_0})}\subset \mathcal O_v$. Let $q=m_{v}\cap R[\mathcal I]_{(Ft^{m_0})}$. 
Then $q\in \mbox{Spec}(R[\mathcal I]_{(Ft^{m_0})})\subset Z(\mathcal I)$ and $\mathcal O_{Z(\mathcal I),q}=(R[\mathcal I]_{(Ft^{m_0})})_q$ is a local ring of $Z(\mathcal I)$ which is dominated by $v$.
Thus if there exists $m_0$ such that $\frac{v(I_{m_0})}{m_0}=\gamma_v(\mathcal I)$ then $v$ has a center on $Z(\mathcal I)$.
\end{proof}

\begin{Proposition}\label{Prop8} Let $v$ be a valuation of $K$ which is nonnegative on $R$. Let $n_0\Delta$ be a multiple of $\Delta$ such that 
$n_0\Delta$ is a $\ZZ$-divisor and  
$\mbox{BL}(\Gamma(X,\mathcal O_X(-nn_0\Delta))=\mbox{sBL}(-n_0\Delta)$ for all $n>0$. 
\begin{enumerate}
\item[1)] If the center of $v$ on $X$ is not in $\mbox{sBL}(-n_0\Delta)$, then there exists $n>1$ such that $\frac{v(I_n)}{n}=\gamma_v(\mathcal I)$.
\item[2)] Suppose that $v$ is a divisorial valuation. Then the center of $v$ on $X$ is not in $\mbox{sBL}(-n_0\Delta)$ if and only if there exists $n>1$ such that $\frac{v(I_n)}{n}=\gamma_v(\mathcal I)$.
\end{enumerate}
\end{Proposition}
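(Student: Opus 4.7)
The plan is to translate $\alpha\in\mbox{sBL}(-n_0\Delta)$ into a valuative condition on $v(I_m)$, prove part 1 immediately, and reduce the reverse direction of part 2 to the case of an exceptional curve on a higher model $Y$. After replacing $\Delta$ by $n_0\Delta$ (which changes neither $\gamma_v(\mathcal I)$ nor $\mbox{sBL}(-n_0\Delta)$, by Proposition \ref{PropVer}), I may assume $n_0=1$, so $\Delta$ is integral and $\mbox{BL}(\Gamma(X,\mathcal O_X(-m\Delta)))=\mbox{sBL}(-\Delta)$ for every $m\geq 1$. Let $\alpha\in X$ be the center of $v$, and choose a local generator $u\in K$ of $\mathcal O_X(-\Delta)$ at $\alpha$; then $u^m$ generates $\mathcal O_X(-m\Delta)_\alpha$. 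For $f\in I_m\subset u^m\mathcal O_{X,\alpha}$ write $f=u^mg$ with $g\in\mathcal O_{X,\alpha}$; since $v$ dominates $\mathcal O_{X,\alpha}$, one has $v(g)=0$ iff $g\notin m_\alpha$ iff $g$ is a unit. This yields the key equivalence
$$
\alpha\in\mbox{BL}(\Gamma(X,\mathcal O_X(-m\Delta)))\ \Longleftrightarrow\ v(I_m)>mv(u),
$$
and hence $\alpha\in\mbox{sBL}(-\Delta)$ iff $v(I_m)>mv(u)$ for all $m\geq 1$.

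Part 1 then follows immediately: the opposite inequality $v(I_m)\geq mv(u)$ is automatic, so if $\alpha\notin\mbox{sBL}(-\Delta)$ then some $m$ satisfies $v(I_m)=mv(u)$, forcing $\gamma_v(\mathcal I)=v(u)=v(I_m)/m$ to be attained. For part 2 the forward direction is part 1; I prove the contrapositive of the reverse direction. Assume $\alpha\in\mbox{sBL}(-\Delta)$ and $v$ is divisorial. The valuation ring of $v$ is essentially of finite type over $R$, so there is a birational projective morphism $\sigma:Y\to X$ with $Y$ nonsingular on which $v=v_{F_v}$ for some prime exceptional divisor $F_v$ of $\pi_Y=\pi\circ\sigma$, and the center of $v$ on $Y$ is the generic point of the curve $F_v$. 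Since $-\sigma^*\Delta$ is nef on $Y$ (pullback preserves nef), and by the projection formula with $\sigma_*\mathcal O_Y=\mathcal O_X$ we have $\Gamma(Y,\mathcal O_Y(-m\sigma^*\Delta))=I_m$ for all $m\geq 0$, the divisor $\sigma^*\Delta$ is a valid nef representation of $\mathcal I$ on $Y$ and is already its own Zariski decomposition. The key equivalence applied to $(Y,\sigma^*\Delta,F_v)$, with local generator $u'$ of $\mathcal O_Y(-\sigma^*\Delta)$ at $\eta_{F_v}$ satisfying $v(u')=v(u)$, then translates $\alpha\in\mbox{sBL}_X(-\Delta)$ into $F_v\subset\mbox{sBL}_Y(-\sigma^*\Delta)$.

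Lemma \ref{Lemma5} applied on $Y$ now yields $(F_v\cdot\sigma^*\Delta)=0$, and Proposition 5.3 of \cite{CNg} applied on $Y$, exactly as in the proof of Lemma \ref{Lemma11}, produces a constant $b\geq 0$ such that $v(I_m)=mv(u)+b_m$ with $0\leq b_m\leq b$ for all $m$ in a suitable arithmetic progression on which the base ideal stabilizes on $Y$. Hence $\gamma_v(\mathcal I)=\lim_m v(I_m)/m=v(u)$, while $F_v\subset\mbox{BL}_Y$ on this progression forces $b_m\geq 1$, so $v(I_m)/m>v(u)=\gamma_v(\mathcal I)$ strictly. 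By (\ref{eq2}) any $n$ attaining $\gamma_v(\mathcal I)$ can be replaced by any multiple, so $\gamma_v(\mathcal I)$ is not attained at any $n$, completing part 2. The main obstacle is arranging this reduction cleanly: one must verify that Lemma \ref{Lemma5} and Proposition 5.3 of \cite{CNg} transfer to the higher model $Y$, which is exactly what the observation that $-\sigma^*\Delta$ is already its own nef Zariski decomposition on $Y$ buys.
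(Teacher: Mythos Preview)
Your proof is correct and follows essentially the same strategy as the paper's: reduce to $n_0=1$, handle part~1 by reading off $v(I_m)=mv(u)$ at any point outside the base locus, and for part~2 pass to a higher nonsingular model on which $v$ is the valuation of an exceptional curve, then invoke nefness of the pullback together with Proposition~5.3 of \cite{CNg} to bound the discrepancy $v(I_m)-mv(u)$ and conclude the infimum is never attained. Your ``key equivalence'' packages the argument nicely, but a couple of steps are superfluous: the appeal to Lemma~\ref{Lemma5} to obtain $(F_v\cdot\sigma^*\Delta)=0$ is never used (Proposition~5.3 of \cite{CNg} needs only that $-\sigma^*\Delta$ is nef), and the restriction to an arithmetic progression is unnecessary since $\mbox{BL}_Y(-m\sigma^*\Delta)=\sigma^{-1}(\mbox{BL}_X(-m\Delta))=\sigma^{-1}(\mbox{sBL}_X(-\Delta))$ already stabilizes for all $m\ge 1$, so the bound $c_m\le b$ with $c_m\ge 1$ holds for every $m$ directly.
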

We will prove the full converse of 1) in Theorem \ref{Theorem7}.

\begin{proof} We first prove 1). We have that $\frac{\gamma_v(\{I_{nn_0}\})}{n_0}=\gamma_v(\mathcal I)$. Further,   $\frac{v(I_m)}{m}=\gamma_v(\mathcal I)$ for some $m$ if and only if $\frac{v(I_{ln_0})}{l}=\gamma_v(\{I_{nn_0}\}$ for some $l$. Thus we may assume that $n_0=1$.

 Let $\Delta=\sum a_iF_i$, where the $F_i$ are prime divisors and $a_i$ are positive integers. 
Suppose that the center $q$ of $v$ on $X$ is not in $\mbox{sBL}(-\Delta)$. Then since $\mbox{BL}(\Gamma(X,\mathcal O_X(-n\Delta)))=\mbox{sBL}(-\Delta)$ for all $n>0$, we have that  
$$
\Gamma(X,\mathcal O_X(-n\Delta))\mathcal O_{X,q}=\mathcal O_X(-\sum na_iF_i)_q=\prod t_i^{na_i}\mathcal O_{X,q}
$$
where $t_i=0$ are local equations of $F_i$ at $q$ (equal to 1 if $q\not\in F_i$). Thus $v(I_n)=n(\sum_{q\in F_i}a_iv(t_i))$ for all $n\ge 1$, so 
$\frac{v(I_n)}{n}=\gamma_v(\mathcal I)$.

We now prove 2). Suppose that $v$ is a divisorial valuation and the center $q$ of $v$ on $X$ is in $\mbox{sBL}(-\Delta)$. Then the center $q$ of $v$ on $X$ is contained in $\pi^{-1}(m_R)$. There exists a birational projective morphism $\epsilon:W\rightarrow X$ such that $W$ is nonsingular and the center of $v$ on $W$ is a nonsingular curve $F$, supported in the preimage of $m_R$, such that $\mathcal O_v=\mathcal O_{W,F}$. Thus $\Gamma(W,\mathcal O_W(-n\epsilon^*(\Delta)))=\Gamma(X,\mathcal O_X(-n\Delta))=I_n$ for all $n$ and $\mbox{sBL}(-\epsilon^*(\Delta))=\epsilon^{-1}(\mbox{sBL}(-\Delta))$. Let $t$ be a generator of the maximal ideal of $\mathcal O_{W,F}$. Write $\epsilon^*(\Delta)=aF+\cdots$.
$F\subset \mbox{sBL}(-\epsilon^*(\Delta))$ implies $I_n\mathcal O_{X,F}=t^{an+c_n}\mathcal O_{X,F}$ where $c_n>0$ for all $n$. Since $-\epsilon^*(\Delta)$ is nef, there exists a bound $b$ such that $c_n<b$ for all $n>0$ by Proposition 5.3  \cite{CNg}. Thus $\gamma_v(\mathcal I)=a$ but $\frac{v(I_n)}{n}>a$ for all $n$.

\end{proof}

\section{$\mbox{Proj}(R[\mathcal I])$  is Noetherian}\label{SecNoeth}  
After replacing $\Delta$ with a multiple of $\Delta$, we can assume that $\Delta$ is an effective  $\ZZ$-divisor and that 
$\mbox{BL}(\Gamma(X,\mathcal O_X(-n\Delta)))=\mbox{sBL}(-\Delta)$ for all $n>0$. We assume this reduction throughout this section.

For $n>0$, let 
$$
\pi_n:Y_n=\mbox{Proj}(R[t^n\Gamma(X,\mathcal O_X(-n\Delta))])\rightarrow \mbox{Spec}(R) 
$$
be the natural $R$-morphism.
$Y_n$ is a projective integral $R$-scheme which is birational to $\mbox{Spec}(R)$.  We have isomorphisms
\begin{equation}
X\setminus \pi^{-1}(m_R)\cong Y_n\setminus \pi_n^{-1}(m_R)\cong \mbox{Spec}(R)\setminus \{m_R\}
\end{equation}
by (\ref{eq10}).
Let $\mathcal J_n$ be the ideal sheaf on $X$ defined by 
$$
\Gamma(X,\mathcal O_X(-n\Delta))\mathcal O_X=\mathcal O_X(-n\Delta)\mathcal  J_n.
$$
We have that the support 
$$
\mbox{Supp }\mathcal O_X/\mathcal J_n = \mbox{sBL}(-\Delta)\subset \pi^{-1}(m_R).
$$
Let $W_n$ be the  normalization of the blowup of $\mathcal J_n$. 
Now $\mathcal J_n\mathcal O_{W_n}=\mathcal O_{W_n}(-A)$ is an invertible sheaf and so $\Gamma(X,\mathcal O_X(-n\Delta))\mathcal O_{W_n}=\mathcal O_{W_n}(-\epsilon_n^*(-n\Delta)-A)$ is invertible.
Thus we have a commutative diagram of birational projective $R$-morphisms
$$
\begin{array}{ccccc}
&\epsilon_n&W_n&\beta_n\\
&\swarrow&&\searrow&\\
X&&&&Y_n\\
&\searrow&&\swarrow\\
&&\mbox{Spec}(R).
\end{array}
$$

Suppose that $E$ is an exceptional curve of $\pi$ and  that $(E\cdot \Delta)<0$. Then $E$ is not contained in $\mbox{sBL}(-\Delta)$ by Lemma \ref{Lemma5} and
$\dim R[\mathcal I]/P_{E}=2$  by Proposition \ref{Prop2}. 
Thus there exists $n_1$ such that  $P_{E}$ contracts to a prime ideal $P_{E,n}$ in $R[t^nI_n]$ such that $\dim R/P_{E,n}=2$ for all exceptional curves $E$ such that $(E\cdot \Delta)<0$ and $n$ is a multiple of $n_1$. 

Since $E\not\subset \mbox{sBL}(-\Delta)$, we have that $P_E=\oplus_{l\ge 0}P_{E,l}$ where 
$$
P_{E,l}=\Gamma(X,\mathcal O_X(-l\Delta-E))=\{x\in I_n\mid v_E(x)>lv_E(I_1)\}
$$
where $v_E$ is the natural valuation associated to $E$. Suppose that $n$ is a multiple of $n_1$. Let $F$ be the image by $\beta_n$ of the strict transform of $E$ on $W_n$ and $P_F$ be the prime ideal in $R[t^nI_n]$  corresponding to $F$. Then $P_F=\oplus_{l\ge 0}(P_F)_{ln},$
where 
$$
(P_F)_{ln}=\{x\in I_n^l\mid v_E(x)>l_E(I_n)\}.
$$
Since $v(I_n)=nv(I_1)$, we have that $P_F\subset P_E\cap R[t^nI_n]=P_{E,n}$ and since $\pi_n(F)=m_R$, $m_RR[t^nI_n]\subset P_F$. 
Thus since 
$$
2=\dim R[t^nI_n]/m_RR[t^nI_n]=\dim R[t^nI_n]/P_{E,n},
$$
we have that $P_F=P_{E,n}$, and so $F$ is a curve on $Y_n$.

After replacing $\Delta$ with $n_1\Delta$, we may assume that for all $n\ge 1$, 
\begin{equation}\label{eq14}
\begin{array}{l}
\mbox{If 
the strict transform of  an exceptional curve $E$ of $\pi$  on $W_n$}\\
\mbox{is contracted to a point on $Y_n$, then $(E_i\cdot \Delta)=0$.}
\end{array}
\end{equation}

We have  natural $R$-morphisms $\phi_n: X\setminus \mbox{sBL}(-\Delta)\rightarrow Y_n$, defined as follows. Write $I_1=(F_1,\ldots,F_r)$ and $I_m=(H_1,\ldots,H_{s(m)})$ where $H_i=F_i^m$ for $1\le i\le r$.

 Suppose that $q\in X\setminus \mbox{sBL}(-\Delta)$. Since $\mbox{sBL}(-\Delta)=\mbox{BL}(\Gamma(X,\mathcal O_X(-\Delta)))$, there exists $F_i$ such that $F_i$ is a generator of 
 $\mathcal O_X(-\Delta)_q$. Thus there exists a neighborhood $U_q$ of $q$ in $X\setminus \mbox{sBL}(-\Delta)$ such that $F_i$ generates $\mathcal O_X(-\Delta)_q$ in $U_q$, and $F_i^m$ generates $\mathcal O_X(-m\Delta)$ on $U_q$. We may thus write $H_j=F_i^mh_j$ for $1\le j\le s(m)$, where $h_j\in\Gamma(U_q,\mathcal O_X)$. Thus we have a natural morphism $U_q\rightarrow \mbox{Spec}(R[\frac{I_m}{F_i^m}])\subset Y_1$, determined by the $R$-algebra homomorphism $R[\frac{I_m}{F_i^m}]\rightarrow \Gamma(U_q,\mathcal O_X)$ defined by mapping $\frac{H_j}{F_i^m}$ to $h_j$. These local morphisms patch to give a morphism $\phi_m:X\setminus \mbox{sBL}(-\Delta)\rightarrow Y_m$, whose image is in the open subset of $Y_m$ which is the union of the affine open subsets $\mbox{Spec}(R[\frac{I_m}{F_i^m}])$ for $1\le i\le r$.  
 The natural injective homomorphisms $R[\frac{I_m}{F_i^m}]\rightarrow R[\frac{I_n}{F_i^n}]$ for $n\ge m$ induce natural morphisms defined on open subsets of $Y_n$ 
 $$
 \alpha_{m,n}: \cup_{i=1}^{r} \mbox{Spec}(R[\frac{I_n}{F_i^n}])\rightarrow \cup_{i=1}^{r} \mbox{Spec}(R[\frac{I_m}{F_i^m}])
  $$
 which satisfy $\alpha_{l,m}\circ\alpha_{m,n}=\alpha_{l,n}$ for $l\le m\le n$. We have commutative diagrams of morphisms
 \begin{equation}\label{eq12}
 \begin{array}{ccccc}
&\phi_n&X\setminus \mbox{sBL}(-\Delta)&\phi_m\\
&\swarrow&&\searrow&\\
   \cup_{i=1}^{r} \mbox{Spec}(R[\frac{I_n}{F_i^n}])        &&\stackrel{\alpha_{m,n}}{\rightarrow}&&\cup_{i=1}^{r} \mbox{Spec}(R[\frac{I_m}{F_i^m}]).\\ 
  \end{array}
  \end{equation}
Let $V_n=\beta_n(\epsilon_n^{-1}(\mbox{sBL}(-\Delta)))$ for $n\ge 1$. Since $W_1$ is normal, $\beta_1$ factors as $W_1\stackrel{\overline \beta_1}{\rightarrow} \overline Y_1\rightarrow Y_1$ where $\overline Y_1$ is the normalization of $Y_1$. Let $\overline V_1=\overline \beta_1(\epsilon_1^{-1}(\mbox{sBL}(-\Delta)))$ in $\overline Y_1$. $W_n$, $Y_n$ and $\overline{Y_1}$  are projective $R$-schemes, and hence are proper $R$-schemes by Theorem II.4.9 \cite{H}. Thus $\beta_n$ and $\overline \beta_1$ are projective morphisms by Proposition II.5.5.5 (v) \cite{EGAII}.   Hence $V_n$ and $\overline V_1$ are  closed in $Y_n$ and $\overline Y_1$ respectively.  Let $W_{1,n}$ be the normalization of the blowup of $\mathcal J_1\mathcal J_n$. 
We have natural morphisms $\gamma_1:W_{1,n}\rightarrow W_1$ and $\gamma_n:W_{1,n}\rightarrow W_n$. 

Let $\epsilon_{1,n}=\epsilon_1\circ \gamma_1:W_{1,n}\rightarrow X$, $\overline \delta_1=\overline\beta_1\circ \gamma_1:W_{1,n}\rightarrow\overline{Y_1}$, $\delta_1=\beta_1\circ \gamma_1:W_{1,n}\rightarrow Y_1$, $\delta_n=\beta_n\circ \gamma_n:W_{1,n}\rightarrow Y_n$. Then
$$
\overline V_1=\overline{\beta_1}(\epsilon_1^{-1}(\mbox{sBL}(-\Delta))=\overline{\delta}_1(\epsilon_{1,n}^{-1}(\mbox{sBL}(-\Delta))),
$$
$$
V_n=\beta_n(\epsilon_n^{-1}(\mbox{sBL}(-\Delta)))=
\delta_n(\epsilon_{1,n}^{-1}(\mbox{sBL}(-\Delta))).
$$

\begin{Proposition}\label{Prop35}
\begin{equation}\label{eq11}
\mbox{There exists $n_0$ such that $\beta_{nn_0}^{-1}(V_{nn_0})=\epsilon_{nn_0}^{-1}(\mbox{sBL}(-\Delta))$ for $n\ge 1$.}
\end{equation}
\end{Proposition}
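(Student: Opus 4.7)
The $\supseteq$ inclusion is immediate from the definition of $V_{nn_0}$ and requires no choice of $n_0$. For the reverse inclusion I plan to argue by contradiction, choosing $n_0$ so that the running reductions of this section---$\Delta$ integral with $\mbox{BL}(\Gamma(X,\mathcal{O}_X(-nn_0\Delta)))=\mbox{sBL}(-n_0\Delta)$ for all $n\geq 1$ and condition~(\ref{eq14})---hold for every multiple $nn_0$. Suppose $w\in W_{nn_0}$ with $\beta_{nn_0}(w)\in V_{nn_0}$ but $\epsilon_{nn_0}(w)\notin \mbox{sBL}(-\Delta)$. By the definition of $V_{nn_0}$ there is $w'\in \epsilon_{nn_0}^{-1}(\mbox{sBL}(-\Delta))$ with $\beta_{nn_0}(w)=\beta_{nn_0}(w')$, and $w\neq w'$ since their images in $X$ differ.

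The key step is the connectedness of the fiber $F:=\beta_{nn_0}^{-1}(\beta_{nn_0}(w))$. Lemma~\ref{Lemma30} gives that $R[\mathcal{I}]$ is normal, and the Veronese subring $R[t^{nn_0}I_{nn_0}]$ of a normal graded domain is again normal, so $Y_{nn_0}$ is a normal integral $R$-scheme. Since $W_{nn_0}\to \mbox{Spec}(R)$ is proper and $Y_{nn_0}\to \mbox{Spec}(R)$ is separated, $\beta_{nn_0}$ is proper; Stein factorization for a proper birational morphism between normal integral schemes gives $(\beta_{nn_0})_*\mathcal{O}_{W_{nn_0}}=\mathcal{O}_{Y_{nn_0}}$, so $F$ is connected. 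Because $\pi\circ\epsilon_{nn_0}=\pi_{nn_0}\circ\beta_{nn_0}$, the image $\epsilon_{nn_0}(F)$ is a connected closed subset of the simple normal crossings curve $\pi^{-1}(m_R)$; it contains the two distinct points $\epsilon_{nn_0}(w)\notin\mbox{sBL}(-\Delta)$ and $\epsilon_{nn_0}(w')\in\mbox{sBL}(-\Delta)$, forcing $\dim\epsilon_{nn_0}(F)\geq 1$.

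I would then analyze the irreducible components of $\epsilon_{nn_0}(F)$. A connected positive-dimensional subset of $\pi^{-1}(m_R)$ containing at least two distinct points has no zero-dimensional irreducible components, so $\epsilon_{nn_0}(w)$ and $\epsilon_{nn_0}(w')$ lie on one-dimensional components $E_0$ and $E_1$, both prime exceptional divisors of $\pi$. For each one-dimensional component $E$ of $\epsilon_{nn_0}(F)$, some irreducible curve in $F$ dominates $E$ under $\epsilon_{nn_0}$ and is contracted by $\beta_{nn_0}$ (since $\beta_{nn_0}|_F$ is constant), so condition~(\ref{eq14}) forces $(E\cdot\Delta)=0$ and the contrapositive of Lemma~\ref{Lemma2} then forces $E$ to be either contained in $\mbox{sBL}(-\Delta)$ or disjoint from it.

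Applying this dichotomy, $E_0$ must be disjoint from $\mbox{sBL}(-\Delta)$ while $E_1\subseteq \mbox{sBL}(-\Delta)$, so the one-dimensional components of $\epsilon_{nn_0}(F)$ partition into two nonempty classes with no intersections between them; this forces $\epsilon_{nn_0}(F)$ to be disconnected, the desired contradiction. The main obstacle I anticipate is the rigorous justification of the connectedness of $F$, since $Y_{nn_0}$ need not be Noetherian; if Stein factorization in this generality is felt to need more care, I would instead pass to the normal scheme $\overline{Y_1}$ introduced in the text and carry out the same analysis using the common refinement $W_{1,nn_0}$, whose proper birational maps to the normal targets $W_1$ and $\overline{Y_1}$ give clean connected-fiber statements to leverage.
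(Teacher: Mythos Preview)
Your overall strategy---produce a connected fiber over the target, push it to $X$, and combine condition~(\ref{eq14}) with Lemma~\ref{Lemma2} to force a dichotomy that contradicts connectedness---is exactly the paper's strategy. The gap is in the step you single out, but not for the reason you give.

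The ring $R[t^{nn_0}I_{nn_0}]$ is \emph{not} a Veronese subring of $R[\mathcal I]$. By definition $Y_n=\mbox{Proj}(R[t^nI_n])$ where $R[t^nI_n]=\bigoplus_{k\ge 0}(I_n)^kt^{nk}$ is the Rees algebra of the single ideal $I_n$; the Veronese $R[\mathcal I]^{(n)}$ is $\bigoplus_{k\ge 0}I_{nk}t^{nk}$, and in general $(I_n)^k\subsetneq I_{nk}$ (indeed, equality for all $k$ would already force $R[\mathcal I]$ to be finitely generated). So Lemma~\ref{Lemma30} gives you nothing about $Y_{nn_0}$: although each $I_m$ is integrally closed, the powers $(I_{nn_0})^k$ need not be, and $Y_{nn_0}$ is typically \emph{not} normal. (It \emph{is} Noetherian, since $R[t^nI_n]$ is a finitely generated $R$-algebra; your stated worry about Noetherianity is misplaced.) Without normality of the target you cannot invoke Zariski connectedness for $\beta_{nn_0}$, and the argument collapses.

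Your fallback is the right repair and is precisely what the paper does, but it requires more than you indicate. One must choose $n_0$ so that $(\overline{I_1^{n_0}})^n$ is integrally closed for all $n\ge 1$ (possible since $R$ is excellent); this makes $R[\overline{I_1^{n_0}}/F^{n_0}]$ the normalization of $R[I_1/F]$ and, crucially, produces actual morphisms $\overline{\alpha}_{nn_0}$ from the relevant opens of $Y_{nn_0}$ into $\overline{Y_1}$ via the chain $R[I_1/F]\subset R[\overline{I_1^{n_0}}/F^{n_0}]\subset R[I_{nn_0}/F^{nn_0}]$. Only then can one show $\delta_{nn_0}^{-1}(q_{nn_0})\subset\overline{\delta_1}^{-1}(\overline{q_1})$ and transport the hypothesis about $V_{nn_0}$ to a statement about a connected fiber over the normal scheme $\overline{Y_1}$. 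After that, the contradiction is obtained from a single curve $A$ in this fiber crossing the boundary of $\epsilon_{1,nn_0}^{-1}(\mbox{sBL}(-\Delta))$, exactly along the lines of your last paragraph.
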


\begin{proof}
Certainly $\epsilon_n^{-1}(\mbox{sBL}(-\Delta))\subset \beta_n^{-1}(V_n)$ for all $n\ge 1$. Let $q\in X\setminus \mbox{sBL}(-\Delta)$ and  $q_n:=\phi_n(q)\in V_n$ for $n\ge 1$.   By the discussion before equation (\ref{eq12}),
There exists $F_i\in I_1$ such that 
 $q_n\in \mbox{Spec}(R[\frac{I_n}{F_i^n}])$, and $q_1\in \mbox{Spec}(R[\frac{I_1}{F_i}])$.

The ideals $I_n$ are integrally closed for all $n$.  
There exists $n_0$ such that 

\begin{equation}\label{eq13}
\mbox{$(\overline{I_1^{n_0}})^n$ is integrally closed for all $n\ge 1$,}
\end{equation}
since the integral closure of $R[tI_1]$ in $R[t]$ is a finite $R[tI_1]$-algebra because $R$ is  excellent. $I_1^{n_0}\subset I_{n_0}$ and $I_{n_0}$ integrally closed implies $\overline{I_1^{n_0}}\subset I_{n_0}$ which implies that 
\begin{equation}\label{eq15}
(\overline {I_1^{n_0}})^n\subset (I_{n_0})^n\subset I_{n_0n}\mbox{ for all $n\ge 1$}. 
\end{equation}
For $F\in I_1$, we have natural inclusions 
\begin{equation}\label{eq16}
R[\frac{I_1}{F}]=R[\frac{I_1^{n_0}}{F^{n_0}}]\subset R[\frac{\overline{I_1^{n_0}}}{F^{n_0}}]=R[\frac{(\overline{I_1^{n_0}})^n}{F^{n_0n}}]
\subset R[\frac{I_{nn_0}}{F^{n_0n}}].
\end{equation}
 By equation (\ref{eq13}), and since $R$ is normal,
\begin{equation}\label{eq73}
R[\overline{I_1^{n_0}}t^{n_0}]\mbox{ is normal}
\end{equation}
 and thus 
 \begin{equation}\label{eq74}
 R[\frac{\overline{I_1^{n_0}}}{F^{n_0}}]=R[t^{n_0}\overline{I_1^{n_0}}]_{(t^{n_0}F^{n_0})}\mbox{ is integrally closed},
 \end{equation}
 where $R[t^{n_0}\overline{I^{n_0}}]_{(t^{n_0}F^{n_0})}$ is
 the set of homogeneous elements of degree 0 in the localization $R[\overline{I_1^{n_0}}t^{n_0}]_{t^{n_0}F^{n_0}}$. Since $q\in X\setminus\mbox{sBL}(-\Delta)$, we have that
$$
I_1^{n_0}\mathcal O_{X,q}\subset \overline{I_1^{n_0}}\mathcal O_{X,q}\subset I_{n_0}\mathcal O_{X,q}=\mathcal O_X(-n\Delta)_q=I_1^{n_0}\mathcal O_{X,q}.
$$
By the discussion before (\ref{eq12}), we have an $R$-morphism
$$
\overline\phi_1:X\setminus \mbox{sBL}(-\Delta)\rightarrow \cup_{i=1}^r\mbox{Spec}(R[\frac{\overline{I_1^{n_0}}}{F_i^{n_0}}])\subset \overline{Y_1}
$$
which factors $\phi_1$, and by (\ref{eq16}), we have $R$-morphisms
$$
\overline{\alpha}_{nn_0}:\cup_{i=1}^r\mbox{Spec}(R[\frac{I_{nn_0}}{F_i^{nn_0}}])\rightarrow \cup_{i=1}^r\mbox{Spec}(R[\frac{\overline{I_1^{n_0}}}{F_i^{n_0}}])
$$
such that $\overline{\alpha_{nn_0}}\phi_{nn_0}=\overline\phi_1$.
Let $\overline q_1=\overline{\phi}_1(q)$.

Suppose that $q_{nn_0}\in V_{nn_0}$ for some $n\ge 1$; that is,  $\delta_{nn_0}^{-1}(q_{nn_0})$ intersects $\epsilon_{1,nn_0}^{-1}(\mbox{sBL}(-\Delta))$. 

Let $p\in \delta_n^{-1}(q_{nn_0})$, and let $v$ be a valuation of $K$ which is centered at $p$ on $W_{1,n}$. Then $v$ is centered at $q_{nn_0}$ on $Y_{nn_0}$, which implies that $v$ is centered at $\overline q_1=\overline{\alpha}_{nn_0}(q_{nn_0})$ on $\overline{Y_1}$.
Thus $\overline{\delta}_1(p)=\overline{q_1}$, and so $\delta_{nn_0}^{-1}(q_{nn_0})\subset \overline{\delta_1}^{-1}(q_1)$. So
   $\overline \delta_1^{-1}(\overline q_1)$ intersects 
$\epsilon_{1,nn_0}^{-1}(\mbox{sBL}(-\Delta))$. Thus there exists $\tilde q\in \overline \delta_1^{-1}(\overline q_1)\cap \epsilon_{1,nn_0}^{-1}(\mbox{sBL}(-\Delta))$.
Now $X\setminus\mbox{sBL}(-\Delta)\cong W_{1,nn_0}\setminus \epsilon_{1,nn_0}^{-1}(\mbox{sBL}(-\Delta))$, so we may identify $q$ with its preimage in $W_{1,nn_0}$. Since $\mathcal O_{\overline Y_1,\overline q_1}$ is normal, $\overline\delta_1^{-1}(\overline q_1)$ is connected by Corollary III.11.4 \cite{H}. Thus there exists a closed integral curve $A$ in $W_{1,nn_0}$ such that $A$ intersects but is not contained in $\epsilon_{1,nn_0}^{-1}(\mbox{sBL}(-\Delta))$ and $\overline{\delta_1}$ contracts $A$ to the point $\overline q_1$ on $\overline Y_1$. 
Let $B=\epsilon_{1,nn_0}(A)$. Now $B$ is a curve on $X$ since $B$ intersects but is not contained in $\mbox{sBL}(-\Delta)$.
Necessarily $B$ is an exceptional curve of $\pi$, since $\overline{\delta_1}$ is an isomorphism away from the preimage of $m_R$.
  By (\ref{eq14}), we have that $(B\cdot \Delta)=0$. But by Lemma \ref{Lemma2}, this is impossible as $B$ intersects $\mbox{sBL}(-\Delta)$. This contradiction shows that $q_{nn_0}\not\in V_{nn_0}$ for all $n\ge 1$, so $\beta_{nn_0}^{-1}(V_{nn_0})\subset \epsilon_{nn_0}^{-1}(\mbox{sBL}(-\Delta))$
 for all $n\ge 1$ and  we have  thus established  Proposition \ref{Prop35}.

\end{proof}

\begin{Corollary}\label{Cor1} Let $n_0$ be the constant of Proposition \ref{Prop35}. Then after replacing $\Delta$ with $n_0\Delta$ (and $F_i$ with $F_i^{n_0}$ for $1\le i\le r$), we have that $\phi_n:X\setminus\mbox{sBL}(-\Delta)\rightarrow Y_n\setminus V_n$ is a proper morphism for all $n\ge 1$.
\end{Corollary}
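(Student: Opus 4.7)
The plan is to identify $\phi_n$, after the reduction permitted by Proposition \ref{Prop35}, with the base change of the projective morphism $\beta_n \colon W_n \to Y_n$ along the open immersion $Y_n \setminus V_n \hookrightarrow Y_n$; properness of this base change will then give properness of $\phi_n$.

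After replacing $\Delta$ by $n_0 \Delta$ (which is allowed since this only passes to a Veronese subalgebra and by Proposition \ref{PropVer} does not change the relevant Proj), Proposition \ref{Prop35} becomes $\beta_n^{-1}(V_n) = \epsilon_n^{-1}(\mbox{sBL}(-\Delta))$ for every $n \ge 1$, and the section-wide reduction $\mbox{BL}(\Gamma(X,\mathcal O_X(-n\Delta))) = \mbox{sBL}(-\Delta)$ is preserved. Since $\mbox{Supp}(\mathcal O_X / \mathcal J_n) = \mbox{sBL}(-\Delta)$, the ideal sheaf $\mathcal J_n$ is the unit ideal on $X \setminus \mbox{sBL}(-\Delta)$, so $\epsilon_n\colon W_n \to X$ is an isomorphism over this open set. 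This gives a canonical identification
$$
X \setminus \mbox{sBL}(-\Delta) \;\stackrel{\sim}{\longrightarrow}\; W_n \setminus \epsilon_n^{-1}(\mbox{sBL}(-\Delta)) \;=\; W_n \setminus \beta_n^{-1}(V_n),
$$
where the second equality uses Proposition \ref{Prop35}.

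Under this identification I claim that $\phi_n$ coincides with the restriction of $\beta_n$. Both $\phi_n$ and the composite $\beta_n \circ \epsilon_n^{-1}|_{X \setminus \mbox{sBL}(-\Delta)}$ are $R$-morphisms into the separated $R$-scheme $Y_n$, and each agrees with the canonical isomorphism $X \setminus \pi^{-1}(m_R) \cong Y_n \setminus \pi_n^{-1}(m_R)$ of (\ref{eq10}) on the dense open complement of the exceptional fiber; by separatedness of $Y_n$ over $R$ they therefore coincide on all of $X \setminus \mbox{sBL}(-\Delta)$. Since $\beta_n$ is projective (recalled in the excerpt as a consequence of Proposition II.5.5.5(v) of \cite{EGAII}) and hence proper, base change along the open immersion $Y_n \setminus V_n \hookrightarrow Y_n$ yields a proper morphism $W_n \setminus \beta_n^{-1}(V_n) \to Y_n \setminus V_n$, which under the identifications above is precisely $\phi_n$. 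The only mildly subtle point is the matching of $\phi_n$ with $\beta_n \circ \epsilon_n^{-1}$, but this is clean from the separatedness argument.
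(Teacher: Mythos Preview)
Your proof is correct and follows essentially the same approach as the paper: use Proposition \ref{Prop35} to identify $W_n\setminus\beta_n^{-1}(V_n)$ with $X\setminus\mbox{sBL}(-\Delta)$, then conclude properness from the base change of the proper morphism $\beta_n$ along the open immersion $Y_n\setminus V_n\hookrightarrow Y_n$. You are in fact a bit more careful than the paper, which simply asserts the last step, whereas you verify via separatedness that $\phi_n$ matches $\beta_n\circ\epsilon_n^{-1}$ on $X\setminus\mbox{sBL}(-\Delta)$.
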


\begin{proof} By Proposition \ref{Prop35}, 
$$
\beta_n^{-1}(V_n)=\epsilon_n^{-1}(\mbox{sBL}(-\Delta)) 
$$
for all $n$. Since $\beta_n:W_n\rightarrow Y_n$ is proper, $W_n\setminus \beta_n^{-1}(V_n)\rightarrow Y_n\setminus V_n$ is proper by Corollary II.4.8 (c) \cite{H}. 
Since $X\setminus\mbox{sBL}(-\Delta)\cong W_n\setminus \epsilon_n^{-1}(\mbox{sBL}(-\Delta))$, $\phi_n:X\setminus\mbox{sBL}(-\Delta)\rightarrow Y_n\setminus V_n$  is proper.
\end{proof}

By  equation (\ref{eq12}) and Corollary \ref{Cor1}, 
$$
\alpha_{m,n}(Y_n\setminus V_n)=
\alpha_{m,n}\phi_n(X\setminus \mbox{sBL}(-\Delta))=\phi_m(X\setminus \mbox{sBL}(-\Delta))=Y_m\setminus V_m.
$$
Thus  $\alpha_{m,n}:Y_n\setminus V_n\rightarrow Y_m\setminus V_m$ is surjective.

The morphisms $Y_n\rightarrow \mbox{Spec}(R)$ are projective, hence they are separated, by Theorem II.4.9 \cite{H}. Hence  $Y_n\setminus V_n\rightarrow \mbox{Spec}(R)$ is separated by Corollary II.4.6 (a) and (b) \cite{H}, and thus $\alpha_{m,n}$ is separated by Corollary II.4.6 (e) \cite{H}.

Let $v$ be a valuation of $K$ which has a center on $Y_m\setminus V_m$. Then since $\phi_m$ is proper, there exists $q\in X\setminus \mbox{sBL}(-\Delta)$ which is the center of $v$ on $X\setminus \mbox{sBL}(-\Delta)$, and so $\phi_n(q)$ is the center of $v$ on $Y_n\setminus V_n$. Thus $\alpha_{m,n}$ is proper by Lemma \ref{LemmaSepProp}.

We now show that $\alpha_{m,n}:Y_n\setminus V_n\rightarrow Y_m\setminus V_m$ is quasi-finite. The notion of a quasi-finite morphism is defined in Exercsie II.3.5 \cite{H} and on page 5 of \cite{Mil}.  If $\alpha_{m,n}$ isn't quasi-finite,  then there exists a curve $C$ in $Y_n\setminus V_n$ such that $\alpha_{m,n}(C)=q_m\in Y_m\setminus V_m$ where $q_m$ is a closed point. Since $\phi_n:X\setminus \mbox{sBL}(-\Delta)\rightarrow Y_n$ is proper, there exists a curve $\gamma$ in $X\setminus\mbox{sBL}(-\Delta)$ such that $\phi_n(\gamma)=C$, and so $\phi_m(\gamma)=q_m$. Since $\phi_m$ is an isomorphism on $X\setminus\pi^{-1}(m_R)$, the closure of $\gamma$ in $X$ is an exceptional curve $E$ of $\pi$. By (\ref{eq14}), $(E\cdot \Delta)=0$, and by Lemma \ref{Lemma2}, $E$ is disjoint from $\mbox{sBL}(-\Delta)$. In particular, $\gamma=E$. Since $E$ is disjoint from $\mbox{sBL}(-\Delta)$, we have that $\mathcal O_X(-n\Delta) \otimes \mathcal O_E$ is generated by global sections, and $(\Delta\cdot E)=0$ implies this is a degree zero line bundle on $X$. Thus $\mathcal O_X(-n\Delta)\otimes\mathcal O_E\cong \mathcal O_E$ and so $E$ is contracted to a point by $\phi_n$, a contradiction. Thus $\alpha_{m,n}$ is quasi finite. Since $\alpha_{m,n}$ is proper and quasi-finite, it is finite by Corollary 1.10 \cite{Mil}.  We have proven the following proposition.

\begin{Proposition} After replacing $\Delta$ with a multiple of $\Delta$, for $n>m\ge 1$, there are commutative diagrams of  $R$-morphisms
$$
\begin{array}{ccccc}
&\phi_{n}&X\setminus \mbox{sBL}(-\Delta)&\phi_{m}\\
&\swarrow&&\searrow&\\
Y_{n}\setminus V_{n}&&\stackrel{\alpha_{m,n}}{\rightarrow}&&Y_{m}\setminus V_{m}\\
\end{array}
$$
such that  $\phi_{n},\phi_{m}$ and $\alpha_{m,n}$ are birational and proper and $\alpha_{m,n}$ is finite.
\end{Proposition}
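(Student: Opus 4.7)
The plan is to assemble the ingredients developed immediately above the statement: Corollary~\ref{Cor1}, the commutative diagram (\ref{eq12}), and the normalization (\ref{eq14}) together with Lemma~\ref{Lemma2}. After replacing $\Delta$ by the multiple produced by Proposition~\ref{Prop35} (and a further multiple ensuring (\ref{eq14})), Corollary~\ref{Cor1} gives that each $\phi_n:X\setminus\mbox{sBL}(-\Delta)\to Y_n\setminus V_n$ is proper, while $\phi_n$ and $\phi_m$ are birational because (\ref{eq10}) identifies both sides with $\mbox{Spec}(R)\setminus\{m_R\}$ away from the respective exceptional loci. The commutative triangle in the statement is simply the restriction of (\ref{eq12}) to the open subsets $X\setminus\mbox{sBL}(-\Delta)$ and $Y_n\setminus V_n$, $Y_m\setminus V_m$; this defines $\alpha_{m,n}$ on $Y_n\setminus V_n$ and forces $\alpha_{m,n}\phi_n=\phi_m$. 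In particular $\alpha_{m,n}$ is birational as a morphism between schemes both birational to $\mbox{Spec}(R)$.

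Next I would transfer properties from the $\phi_i$ to $\alpha_{m,n}$. Surjectivity is immediate from $\alpha_{m,n}\phi_n=\phi_m$ together with $\phi_m(X\setminus\mbox{sBL}(-\Delta))=Y_m\setminus V_m$. For separatedness, $Y_n\setminus V_n\to\mbox{Spec}(R)$ is separated as an open subscheme of the projective, hence separated, $R$-scheme $Y_n$, so $\alpha_{m,n}$ inherits separatedness by Corollary II.4.6 (e) \cite{H}. Properness of $\alpha_{m,n}$ I would verify via Lemma~\ref{LemmaSepProp}: any valuation $v$ of $K$ with a center on $Y_m\setminus V_m$ has, by properness of $\phi_m$, a unique center $q\in X\setminus\mbox{sBL}(-\Delta)$, and then $\phi_n(q)$ is a center of $v$ on $Y_n\setminus V_n$; uniqueness of such a center follows from the already established separatedness of $\alpha_{m,n}$.

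The main obstacle is quasi-finiteness of $\alpha_{m,n}$, since once this is secured the finiteness statement follows from the proper-plus-quasi-finite criterion, Corollary~1.10 of \cite{Mil}. To prove it I would argue by contradiction: if a curve $C\subset Y_n\setminus V_n$ is contracted to a closed point $q_m\in Y_m\setminus V_m$, then properness of $\phi_n$ lifts $C$ to a curve $\gamma\subset X\setminus\mbox{sBL}(-\Delta)$ with $\phi_m(\gamma)=q_m$. Because $\phi_m$ is an isomorphism away from $\pi^{-1}(m_R)$, the closure $E=\overline{\gamma}$ in $X$ is an exceptional curve of $\pi$; the normalization (\ref{eq14}) forces $(E\cdot\Delta)=0$, and Lemma~\ref{Lemma2} then forces $E$ to be disjoint from $\mbox{sBL}(-\Delta)$, so in fact $\gamma=E$. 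Now $\mathcal{O}_X(-n\Delta)\otimes\mathcal{O}_E$ is globally generated (as $E\cap\mbox{sBL}(-\Delta)=\emptyset$) and has degree $(-n\Delta\cdot E)=0$, hence is trivial; this forces $\phi_n$ to contract $E$ to a point, contradicting the fact that $\phi_n(E)=C$ is a curve. This contradiction yields quasi-finiteness and completes the proof.
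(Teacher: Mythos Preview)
Your proposal is correct and follows essentially the same argument as the paper's own proof: Corollary~\ref{Cor1} for properness of the $\phi_n$, the commutativity of (\ref{eq12}) plus surjectivity of $\phi_n,\phi_m$ to obtain $\alpha_{m,n}(Y_n\setminus V_n)=Y_m\setminus V_m$, separatedness via Corollary~II.4.6 of \cite{H}, properness via Lemma~\ref{LemmaSepProp}, the identical contradiction argument (using (\ref{eq14}), Lemma~\ref{Lemma2}, and triviality of the degree-zero globally generated bundle $\mathcal O_X(-n\Delta)\otimes\mathcal O_E$) for quasi-finiteness, and Corollary~1.10 of \cite{Mil} to conclude finiteness.
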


There exist homogeneous forms $G_1t^{d_1},\ldots,G_st^{d_s}\in R[tI_1]$ such that $V_1=Z(G_1t^{d_1},\ldots,G_s t^{d_s})$. Replacing the $G_jt^{d_j}$ with suitable powers, we may assume that  the $d_j$ are all equal to a common $d$; that is, $d_j=d$ for all $j$. After replacing $\Delta$ with $d\Delta$ and $F_i$ with $F_i^d$ for $1\le i\le r$, we may assume that $d=1$.

Let $N$ be the normalization of $Y_1\setminus V_1$.
The morphisms $\alpha_{1,n}:Y_n\setminus V_n\rightarrow Y_1\setminus V_1$ are finite and birational so for $n>m$ we have a factorization
\begin{equation}\label{eq71}
N\rightarrow Y_n\setminus V_n\rightarrow Y_m \setminus V_m\rightarrow Y_1\setminus V_1.
\end{equation}

As in equations (\ref{eq13}) and (\ref{eq73}), there exists $n_0$ such that $R[t^{n_0}\overline{I_1^{n_0}}]$ is normal and as in (\ref{eq15}), $\overline{I_1^{n_0}}\subset I_{n_0}$.
As in equation (\ref{eq16}), for $F\in I_1$, 
$$
R[\frac{I_1}{F}]\subset R[\frac{\overline{I_1^{n_0}}}{F^{n_0}}]\subset R[\frac{I_{n_0}}{F^{n_0}}]
$$
where $R[\frac{\overline{I_1^{n_0}}}{F^{n_0}}]$ is integrally closed by (\ref{eq74}). Thus $R[\frac{\overline{I_1^{n_0}}}{F^{n_0}}]$ is the normalization of $R[\frac{I_1}{F}]$, which is contained in $R[\frac{I_{n}}{F^{n}}]$ for $n\ge n_0$. 

Since $Y_1\setminus V_1\subset \cup_{i=1}^r \mbox{Spec}(R[\frac{I_1}{F_i}])$, we have that $Y_1\setminus V_1=\cup_{i,j}\mbox{Spec}(R[\frac{I_1}{F_i},\frac{F_i}{G_j}])$. 

Let $g_{ij}=\frac{G_j}{F_i}$. We have that
$$
R[\frac{I_1}{F_i},\frac{F_i}{G_j}]=R[\frac{I_1}{F_i}]_{g_{ij}}=R[\frac{I_1^{2}}{F_iG_j}]=R[tI_1]_{(F_iG_jt^{2})},
$$
the elements of degree 0 in the localization $R[tI_1]_{F_iG_jt^{2}}$ and for $n\ge 1$,
\begin{equation}\label{eq41}
R[\frac{I_n}{F_i^n},\frac{F_i}{G_j}]=R[\frac{I_n}{F_i^n}]_{g_{ij}}=R[\frac{I_n^{2}}{F_i^nG_j^n}]=R[t^nI_n]_{(F_i^nG_j^nt^{2n})}.
\end{equation}
The natural morphisms $\psi_n:N\rightarrow Y_n\setminus V_n$ of (\ref{eq71}) are finite, hence affine. We have factorizations $\alpha_{m,n}\psi_n=\psi_m$ for $m<n$. Consider the morphisms
$$
\alpha_{m,n}:\cup_{i=1}^r\mbox{Spec}(R[\frac{I_n}{F_i^n}])\rightarrow \cup_{i=1}^r\mbox{Spec}(R[\frac{I_m}{F_i^m}])
$$
of (\ref{eq12}).  We have that $\alpha_{m,n}^{-1}(\mbox{Spec}(R[\frac{I_m}{F_i^m}])=\mbox{Spec}(R[\frac{I_n}{F_i^n}])$, since 
$$
\mbox{Spec}(R[\frac{I_m}{F_i^m}])\cap \mbox{Spec}(R[\frac{I_m}{F_j^m}])=
\mbox{Spec}(R[\frac{I_m}{F_i^m},\frac{F_i^m}{F_j^m}])=\mbox{Spec}(R[\frac{I_m}{F_j^m},\frac{F_j^m}{F_i^m}]).
$$

Recall that $Y_n\setminus V_n\subset \cup_{i=1}^r\mbox{Spec}(R[\frac{I_n}{F_i^n}])$ for all $n$. We will establish the following formula for all $n\ge 1$:
\begin{equation}\label{eq80}
Y_n\setminus V_n=\alpha_{1,n}^{-1}(Y_1\setminus V_1)
\end{equation}
where $\alpha_{1,n}$ is the morphism of equation (\ref{eq12}), so that $\alpha_{1,n}^{-1}(Y_n\setminus V_n)$ is a subset of $\cup_{i=1}^r\mbox{Spec}(R[\frac{I_n}{F_i^n}])$.

Suppose that $q_n\in \alpha_{1,n}^{-1}(Y_1\setminus V_1)$. Let $v$ be a valuation of $K$ which has center $q_n$ on $Y_n$. Let $p$ be the center of $v$ on $X$ (which exists since $v$ is nonnegative on $R$ and $X$ is projective over $\mbox{Spec}(R))$. The center of $v$ on $\cup_{i=1}^r\mbox{Spec}(R[\frac{I_1}{F_i}])$ is $q_1=\alpha_{1,n}(q_n)$, which is in $Y_1\setminus V_1$ by assumption. But $\phi_1:X\setminus \mbox{sBL}(-\Delta)\rightarrow Y_1\setminus V_1$ is proper by Corollary \ref{Cor1}, so the center $p$ of $v$ on $X$ is in $X\setminus \mbox{sBL}(-\Delta)$. Thus $q_n=\phi_n(p)\in Y_n\setminus V_n$, and so $\alpha_{1,n}^{-1}(Y_1\setminus V_1)\subset Y_n\setminus V_n$. Now suppose that $q_n\in Y_n\setminus V_n$. Then $q_n=\phi_n(p)$ for some $p\in X\setminus \mbox{sBL}(-\Delta)$ by Corollary \ref{Cor1}, which implies that $\alpha_{1,n}(q_n)=\phi_1(p)\in Y_1\setminus V_1$. Thus $Y_n\setminus V_n\subset \alpha_{1,n}^{-1}(Y_1\setminus V_1)$, establishing (\ref{eq80}).

%

It now follows from (\ref{eq80}) that
$$
Y_n\setminus V_n=\cup_{i,j}\alpha_{1,n}^{-1}(\mbox{Spec}(R[\frac{I_1}{F_i}]_{g_{ij}})=\cup_{i,j}\mbox{Spec}(R[\frac{I_n}{F_i^n}]_{g_{ij}})
$$
for all $n\ge 1$.

We  have that 
$$
\psi_n^{-1}(\mbox{Spec}(R[\frac{I_{n}}{F_i^n}]_{g_{ij}})=\mbox{Spec}(A_{ij})
$$
where $A_{ij}$ is the integral closure of $R[\frac{I_1}{F_i}]_{g_{ij}}$ in $K$ for $1\le i\le r$, $1\le j\le s$ and $n\ge 1$, 

with natural finite birational inclusions
$$
R[\frac{I_1}{F_i}]_{g_{ij}}\rightarrow R[\frac{\overline{I_1^{n_0}}}{F_i^{n_0}}]_{g_{ij}}\rightarrow
R[\frac{I_{n_0}}{F_i^{n_0}}]_{g_{ij}}\rightarrow R[\frac{I_{n}}{F_i^n}]_{g_{ij}}\rightarrow A_{ij}
$$
for $n\ge n_0$.

Since $R[\frac{\overline{I_1^{n_0}}}{F_i^{n_0}}]_{g_{i,j}}$ is integrally closed it is equal $A_{ij}$. Thus 
\begin{equation}\label{eq61}
\mbox{$R[\frac{I_{n}}{F_i^n}]_{g_{ij}}=A_{ij}$ is integrally closed for $n\ge n_0$,}
\end{equation}
and so 
\begin{equation}\label{eq81}
Y_n\setminus V_n=N\mbox{ for }n\ge n_0.
\end{equation}

We may now establish the following theorem.

\begin{Theorem}\label{Theorem4} There is a natural open immersion $\Psi:N\rightarrow Z(\mathcal I)=\mbox{Proj}(R[\mathcal I])$.
The image of $\Psi$ is 
$$
\{P\in \mbox{Proj}(R[\mathcal I])\mid \mbox{ there exists a valuation $v$  of $K$ whose center is on $X\setminus\mbox{sBL}(-\Delta)$}\}
$$
\end{Theorem}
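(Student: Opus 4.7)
The plan is to realize $N$ as an open subscheme of $Z(\mathcal I)$ by identifying their respective affine covers. First I would construct an auxiliary morphism $\zeta: X \setminus \mbox{sBL}(-\Delta) \to Z(\mathcal I)$. Put $U_i=\{q\in X\mid F_i \mbox{ generates } \mathcal O_X(-\Delta)_q\}$; since $\mbox{BL}(\Gamma(X,\mathcal O_X(-\Delta)))=\mbox{sBL}(-\Delta)$, these cover $X\setminus \mbox{sBL}(-\Delta)$. On $U_i$ the assignment $h/F_i^k\mapsto h/F_i^k$ (now viewed as a regular function, since $F_i^k$ trivializes $\mathcal O_X(-k\Delta)$ on $U_i$) gives a ring map $R[\mathcal I]_{(F_i)}\to \Gamma(U_i,\mathcal O_X)$, hence a morphism $U_i\to D_+(F_i)\subset Z(\mathcal I)$; these patch to $\zeta$. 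By construction, any valuation $v$ of $K$ centered at a point $q\in U_i$ dominates $R[\mathcal I]_{(F_i)}$ through this map, so its unique center on $Z(\mathcal I)$ equals $\zeta(q)$.

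Setting $U_j'=\{q\in X\mid G_j \mbox{ generates } \mathcal O_X(-\Delta)_q\}$, properness of $\phi_1: X\setminus \mbox{sBL}(-\Delta)\to Y_1\setminus V_1$ (Corollary \ref{Cor1}) together with $Y_1\setminus V_1=\cup_j D_+(G_jt)$ forces $X\setminus \mbox{sBL}(-\Delta)=\cup_{i,j}(U_i\cap U_j')$ and $\zeta(U_i\cap U_j')\subset D_+(F_iG_j)\subset Z(\mathcal I)$. The key step is to prove the equality of subrings of $K$,
$$
R[\mathcal I]_{(F_iG_j)} = A_{ij}.
$$
The inclusion $A_{ij}\subseteq R[\mathcal I]_{(F_iG_j)}$ is immediate since $A_{ij}=R[I_n^2/(F_i^nG_j^n)]$ for $n\ge n_0$ and $I_n^{2k}\subseteq I_{2nk}$. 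For the reverse inclusion, every element $h/(F_iG_j)^k$ of $R[\mathcal I]_{(F_iG_j)}$ with $h\in I_{2k}$ is a regular function on $U_i\cap U_j'$, because $F_i^kG_j^k$ trivializes $\mathcal O_X(-2k\Delta)$ there. Now $\phi_n^{-1}(\mbox{Spec}(A_{ij}))=U_i\cap U_j'$, $\phi_n$ is proper (Corollary \ref{Cor1}), and both $U_i\cap U_j'$ and $\mbox{Spec}(A_{ij})$ are integral and normal with common function field $K$, so the standard argument (as in part 1 of Lemma \ref{LemmaC}) yields $(\phi_n|_{U_i\cap U_j'})_*\mathcal O_X=\mathcal O_{\mbox{Spec}(A_{ij})}$ and hence $\Gamma(U_i\cap U_j',\mathcal O_X)=A_{ij}$, giving $R[\mathcal I]_{(F_iG_j)}\subseteq A_{ij}$.

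The local equalities $\mbox{Spec}(A_{ij})=D_+(F_iG_j)$ then glue (tautologically, as both sides record the same subring of $K$) to an open immersion $\Psi:N\to Z(\mathcal I)$ with image $\cup_{i,j}D_+(F_iG_j)$. To identify this image: if $v$ has a center $q\in X\setminus \mbox{sBL}(-\Delta)$ then $q\in U_i\cap U_j'$ for some $(i,j)$, so $\zeta(q)\in D_+(F_iG_j)$ is the unique center of $v$ on $Z(\mathcal I)$. Conversely, given $P\in D_+(F_iG_j)$, choose a valuation $v$ of $K$ centered at $P$; via $\Psi^{-1}$ it is centered on $\mbox{Spec}(A_{ij})\subset N$, and by properness of $\phi_n|_{U_i\cap U_j'}:U_i\cap U_j'\to \mbox{Spec}(A_{ij})$ combined with the valuative criterion (Lemma \ref{LemmaSepProp}), $v$ lifts to a center in $U_i\cap U_j'\subset X\setminus \mbox{sBL}(-\Delta)$. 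The main obstacle will be the identification $\Gamma(U_i\cap U_j',\mathcal O_X)=A_{ij}$, which relies on properness of $\phi_n$ together with the normality of both source and target.
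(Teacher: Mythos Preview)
Your proof is correct and follows the same overall approach as the paper: identify the affine pieces $\mbox{Spec}(A_{ij})$ of $N$ with the opens $D_+(F_iG_j)\subset Z(\mathcal I)$ via the equality $R[\mathcal I]_{(F_iG_j)}=A_{ij}$, then use properness of $\phi_n$ to describe the image. The only real difference is in establishing the inclusion $R[\mathcal I]_{(F_iG_j)}\subseteq A_{ij}$: the paper observes directly that $R[\mathcal I]_{(F_iG_j)}=\bigcup_{n\ge 0} R[t^nI_n]_{(F_i^nG_j^nt^{2n})}$ (for $h\in I_{2k}$ one has $h(F_iG_j)^k\in I_{2k}^2$, placing $h/(F_iG_j)^k$ in the $n=2k$ term) and this union stabilizes at $A_{ij}$ by (\ref{eq61}), whereas you route through $\Gamma(U_i\cap U_j',\mathcal O_X)=A_{ij}$ via properness of $\phi_n$ and normality of $A_{ij}$---a slightly more geometric but equally valid path.
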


\begin{proof} By equations (\ref{eq41}) and (\ref{eq61}), 
we have that
$$
R[\mathcal I]_{(F_iG_jt^{2})}=\cup_{n\ge 0}R[t^nI_n]_{(F_i^nG_j^nt^{2n})}=A_{i,j}.
$$
Thus there are natural isomorphisms $\mbox{Spec}(A_{ij})\rightarrow \mbox{Spec}(R[\mathcal I]_{(F_iG_j)})$ which are compatible with restriction, so we have an open immersion of 
$N=\cup_{i,j}\mbox{Spec}(A_{ij})$ into $\mbox{Proj}(R[\mathcal I])$.

Suppose that a  valuation $v$ of $K$  has a center $q\in X\setminus \mbox{sBL}(-\Delta)$. Then $v$ has center $q_n=\phi_n(q)\in Y_n\setminus V_n$ for all $n$. For $n\gg 0$, we have  shown that $\mathcal O_{Y_n,q_n}$ is a local ring of $Z(\mathcal I)$ so that $v$ has a center on $Z(\mathcal I)$. 

Suppose that $v$ has a center in the image of $\Psi$. Then $v$ dominates $\mathcal O_{Z(\mathcal I),\alpha}$ for some point $\alpha$ in the image of $\Psi$, and so $\mathcal O_{Z(\mathcal I),\alpha}\cong \mathcal O_{Y_n,q_n}$ for some $q_n\in Y_n\setminus V_n$ with $n\gg 0$. But $\mathcal O_{Y_n,q_n}$ is dominated by $v$ and $\phi_n$ is proper, so $v$ has a center on $X\setminus \mbox{sBL}(-\Delta)$.

\end{proof}

\begin{Lemma}\label{Lemma22} Suppose that there exists a valuation $v$ of $K$  such that $v$ has center on $X$ in $\mbox{sBL}(-\Delta)$
and has center $\alpha\in Z(\mathcal I)$. If $\omega$ is another valuation of $K$ which has center at $\alpha$ on $Z(\mathcal I)$ then $\omega$ has center in $\mbox{sBL}(-\Delta)$ on $X$.
\end{Lemma}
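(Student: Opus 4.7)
\textbf{Proof plan for Lemma \ref{Lemma22}.} Let $\alpha$ be the common center of $v$ and $\omega$ on $Z(\mathcal I)$. My approach is to show first that $\alpha$ cannot lie in the image of the open immersion $\Psi\colon N\rightarrow Z(\mathcal I)$ of Theorem \ref{Theorem4}. Once this is in hand, the conclusion about $\omega$ is an immediate consequence of the explicit description of the image of $\Psi$ given in that theorem, together with the fact that $\omega$ must have a center on the proper $R$-scheme $X$.

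The first step is a proof by contradiction. Suppose $\alpha\in\Psi(N)$. By the construction preceding Theorem \ref{Theorem4}, together with (\ref{eq81}) (which identifies $N$ with $Y_n\setminus V_n$ for $n\ge n_0$) and (\ref{eq41}), we get a point $q_n\in Y_n\setminus V_n$ for some $n\ge n_0$ with $\mathcal O_{Z(\mathcal I),\alpha}\cong \mathcal O_{Y_n,q_n}$. Since $v$ dominates $\mathcal O_{Z(\mathcal I),\alpha}$, the valuation $v$ then has center $q_n$ on $Y_n\setminus V_n$. But Corollary \ref{Cor1} says $\phi_n\colon X\setminus\mbox{sBL}(-\Delta)\rightarrow Y_n\setminus V_n$ is proper, so Lemma \ref{LemmaSepProp}(2) yields a center of $v$ on $X\setminus\mbox{sBL}(-\Delta)$. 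On the other hand, by hypothesis $v$ has a center $p\in\mbox{sBL}(-\Delta)$ on $X$, and since $X$ is proper over $\mbox{Spec}(R)$, hence separated, Lemma \ref{LemmaSepProp}(1) forces $v$ to have at most one center on $X$. Since the open subscheme $X\setminus\mbox{sBL}(-\Delta)$ and the closed set $\mbox{sBL}(-\Delta)$ are disjoint, this is a contradiction.

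Hence $\alpha\notin\Psi(N)$. By the characterization of $\Psi(N)$ given in the second part of Theorem \ref{Theorem4}, no valuation of $K$ whose center on $Z(\mathcal I)$ equals $\alpha$ can have its center on $X$ lying in $X\setminus\mbox{sBL}(-\Delta)$. Applied to $\omega$, and noting that $\omega$ is nonnegative on $R$ (because $R\subset\mathcal O_{Z(\mathcal I),\alpha}\subset\mathcal O_\omega$), and that $\omega$ therefore has a unique center on the proper $R$-scheme $X$, this center of $\omega$ on $X$ must lie in $\mbox{sBL}(-\Delta)$, which is the desired conclusion.

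The main (minor) obstacle is to read off cleanly from the proof of Theorem \ref{Theorem4} the genuine isomorphism $\mathcal O_{Z(\mathcal I),\alpha}\cong \mathcal O_{Y_n,q_n}$ for $n\ge n_0$ (rather than a mere inclusion or specialization), which is what allows $v$ to be transferred from $Z(\mathcal I)$ to $Y_n\setminus V_n$ so that Corollary \ref{Cor1} can be invoked; this rests on the equality $R[\mathcal I]_{(F_iG_jt^{2})}=A_{ij}$ extracted from (\ref{eq41}) and (\ref{eq61}).
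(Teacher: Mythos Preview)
Your proof is correct and follows essentially the same approach as the paper. The paper argues by contradiction directly on $\omega$: assuming $\omega$ has center in $X\setminus\mbox{sBL}(-\Delta)$, Theorem~\ref{Theorem4} gives $\mathcal O_{Z(\mathcal I),\alpha}\cong\mathcal O_{Y_n,q_n}$ with $q_n\in Y_n\setminus V_n$, whence $v$ is centered at $q_n$ and properness of $\phi_n$ forces the center of $v$ on $X$ into $X\setminus\mbox{sBL}(-\Delta)$, a contradiction. Your version isolates the intermediate claim $\alpha\notin\Psi(N)$ first and then invokes the description of $\Psi(N)$ to handle $\omega$, but the core mechanism---transferring $v$ via the local ring isomorphism and using properness of $\phi_n$---is identical.
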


\begin{proof} Suppose that the center of $\omega$ on $X$ is 
 in $X\setminus \mbox{sBL}(-\Delta)$ and the center of $\omega$  on $Z(\mathcal I)$ is $\alpha$. We will derive a contradiction. 
By Theorem \ref{Theorem4}, $\mathcal O_{Z,\alpha}=\mathcal O_{Y_n,q_n}$ for some $n\ge 1$, with $q_n=\phi_n(q) \in Y_n\setminus V_n$.
Since the center of $v$ on $Z(\mathcal I)$ is $\alpha$, the center of $v$ on $Y_n$ is $q_n$. But $q_n\in Y_n\setminus V_n$, so the center of $v$ on $X$ is in $X\setminus\mbox{sBL}(-\Delta)$  since $\phi_n$ is proper, giving a contradiction.

\end{proof}

\begin{Theorem}\label{Theorem5} Suppose that $v$ is a valuation of $K$ which has a center in $\mbox{sBL}(-\Delta)$ on $X$. Then $v$ does not have a center on $Z(\mathcal I)$.
\end{Theorem}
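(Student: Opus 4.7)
The plan is to argue by contradiction. Suppose there is a valuation $v$ of $K$ with center $q\in\mbox{sBL}(-\Delta)$ on $X$ and center $\alpha\in Z=\mbox{Proj}(R[\mathcal I])$, and let $P\subset R[\mathcal I]$ be the homogeneous prime corresponding to $\alpha$.

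The first step is to extract the constraint $v(I_m)/m=\gamma_v(\mathcal I)$ for some $m$. Since $R[\mathcal I]_+\not\subset P$, we may pick $H\in I_m$ with $Ht^m\notin P$. The inclusion $R[\mathcal I]_{(Ht^m)}\subset R[\mathcal I]_{(P)}\subset\mathcal O_v$ forces $v(G/H^n)\ge 0$ for every $G\in I_{nm}$ and $n\ge 1$, giving $v(I_{nm})\ge nv(H)$. Combined with $H^n\in I_m^n\subset I_{nm}$, which yields $v(I_{nm})\le nv(H)$, we conclude $v(I_{nm})=nv(I_m)=nv(H)$ for all $n\ge 1$, and so $v(I_m)/m=v(I_{nm})/nm=\gamma_v(\mathcal I)$ for every $n$.

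The geometric core of the argument is to convert this scalar identity into a linear growth statement for a prime divisor. For each $n\ge 1$, let $\epsilon_n:W_n\to X$ be the normalization of the blowup of $\mathcal J_m\cdot\mathcal J_{nm}$, on which $\mathcal J_m\mathcal O_{W_n}=\mathcal O_{W_n}(-\tilde A_m)$ and $\mathcal J_{nm}\mathcal O_{W_n}=\mathcal O_{W_n}(-\tilde A_{nm})$ are both invertible. Let $q^{(n)}$ be the center of $v$ on $W_n$. Since $\epsilon_n(q^{(n)})=q\in\mbox{sBL}(-\Delta)$ and $\mbox{Supp}(\tilde A_m)=\epsilon_n^{-1}(\mbox{sBL}(-\Delta))$, some prime divisor $G$ appearing in $\tilde A_m$ passes through $q^{(n)}$. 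For $k\in\{m,nm\}$ the invertible sheaf $I_k\mathcal O_{W_n}=\mathcal O_{W_n}(-k\epsilon_n^*\Delta-\tilde A_k)$ gives $v(I_k)=\sum_{F\ni q^{(n)}}c_{F,k}v(t_F)$, where $F$ runs over prime divisors of $k\epsilon_n^*\Delta+\tilde A_k$ passing through $q^{(n)}$, $t_F$ is a local equation of $F$, and $v(t_F)>0$. The inclusion $\mathcal J_m^n\subset\mathcal J_{nm}$ forces $\tilde A_{nm}\le n\tilde A_m$ componentwise, hence $nc_{F,m}-c_{F,nm}\ge 0$. The equation $v(I_{nm})=nv(I_m)$ then becomes $\sum_{F\ni q^{(n)}}(nc_{F,m}-c_{F,nm})v(t_F)=0$, a sum of nonnegative terms in the ordered value group of $v$, which forces each summand to vanish; in particular $v_G(\mathcal J_{nm})=nv_G(\mathcal J_m)$ for every $n\ge 1$.

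To conclude, since $-\Delta$ is nef on $X$ and $-\epsilon_n^*\Delta$ remains nef on $W_n$, Proposition 5.3 of \cite{CNg} (as invoked in the proofs of Proposition \ref{Prop8} and Lemma \ref{Lemma11}) supplies a constant $b$ with $v_G(\mathcal J_k)\le b$ for all $k\ge 1$. Then $nv_G(\mathcal J_m)=v_G(\mathcal J_{nm})\le b$ for all $n$ forces $v_G(\mathcal J_m)=0$, which contradicts the fact that $G$ is a prime component of $\tilde A_m$, so $v_G(\mathcal J_m)>0$ by construction. The main obstacle I anticipate will be making the componentwise extraction rigorous when the value group of $v$ is not archimedean; this step rests on the fact that in any totally ordered abelian group a sum of nonnegative elements vanishes only when each summand does, applied with $v(t_F)>0$ and $nc_{F,m}-c_{F,nm}\in\mathbb Z_{\ge 0}$.
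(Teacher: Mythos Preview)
Your strategy is quite different from the paper's: the paper reduces to the case $\ell(\mathcal I)=2$, picks an exceptional curve $E$ with $(E\cdot\Delta)<0$ and $P_E\subset Q$, builds a rank-two composite valuation $\omega=v_E\circ\mu$ centered at the point $q\in E\cap\mbox{sBL}(-\Delta)$, and then uses Riemann--Roch on $E$ together with the cohomology bound of Corollary 5.7 \cite{CNg} to compute $\omega(I_n)$ precisely enough to show $\gamma_\omega(\mathcal I)$ is never achieved. Your idea of passing from $v(I_{nm})=nv(I_m)$ to a componentwise identity for a divisorial valuation $v_G$ is appealing and would be more elementary, but it has a genuine gap.

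The problem is that your prime divisor $G$ depends on $n$. You construct $W_n$ as the normalized blowup of $\mathcal J_m\cdot\mathcal J_{nm}$, and $G=G_n$ is chosen as a component of $\tilde A_m$ through $q^{(n)}$ on that particular $W_n$. Your vanishing argument then yields $v_{G_n}(\mathcal J_{nm})=n\,v_{G_n}(\mathcal J_m)$ for that single value of $n$. To reach a contradiction you would need a \emph{fixed} divisorial valuation $v_G$ for which the identity holds for all $n$ (or at least for an unbounded set of $n$), because the bound $b$ supplied by Proposition 5.3 of \cite{CNg} depends on the curve $G$ and the model on which it lives. As written, you only obtain $n\,v_{G_n}(\mathcal J_m)\le b_n$ with $b_n$ depending on $G_n$, which is no contradiction. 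There is no obvious reason the family $\{v_{G_n}\}$ should be finite, since the exceptional configurations of the $W_n$ grow without bound. (A secondary issue: your local factorization $v(I_k)=\sum_F c_{F,k}\,v(t_F)$ requires $\mathcal O_{W_n,q^{(n)}}$ to be factorial, so you should pass to a resolution of $W_n$ rather than merely its normalization; this is easily repaired in dimension two, but does not address the main gap.)
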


\begin{proof} 
After replacing $\Delta$ with a multiple of $\Delta$, we may assume that $\Delta$ is an integral divisor and $\mbox{BL}(\Gamma(X,\mathcal O_X(-n\Delta)))=\mbox{sBL}(-\Delta)$ for all $n\ge 1$.
Suppose that $v$ has the center $\alpha$ on $Z(\mathcal I)$ and the center $q$ in $\mbox{sBL}(-\Delta)$ on $X$. We will denote 
 the prime ideal in $R[\mathcal I]$ corresponding to $\alpha$ by $Q$. Since $\mbox{sBL}(-\Delta)\subset \pi^{-1}(m_R)$, the center of $v$ on $\mbox{Spec}(R)$ is $m_R$ and so $\alpha\in \tau^{-1}(m_R)$. Thus $m_RR[\mathcal I]\subset  Q$.

 Suppose that $\ell(\mathcal I)=0$. Then $\tau^{-1}(m_R)=\emptyset$, contradicting the fact that $\alpha\in \tau^{-1}(m_R)$, so this case cannot occur.

   Suppose that $\ell(\mathcal I)=1$. Then by Theorem \ref{Theorem2}, $R[\mathcal I]$ is a finitely generated $R$-algebra. So  $\mbox{sBL}(-\Delta)=\emptyset$ by Lemma \ref{Lemma11}. Thus  this case also cannot occur.
 
 Now suppose that $\ell(\mathcal I)=2$. Then $m_RR[\mathcal I]=\cap P_{E_i}$ where the $E_i$ are the exceptional curves of $\pi$ which  satisfy $(E_i\cdot\Delta)<0$ by Theorem \ref{Theorem2} and Proposition \ref{Prop2}.
 Since $m_RR[\mathcal I]\subset  Q$, there exists $E=E_i$ such that  $P_E\subset Q$ (and $(E\cdot \Delta)<0$).
Let $A=\mathcal O_{Z(\mathcal I),\alpha}=R[\mathcal I]_{(Q)}$. Then
$$
(0)\subset \tilde P:=(P_E)_{(Q)}\subset m_{A}=(Q)_{(Q)}.
$$
Now $E\not\subset \mbox{sBL}(-\Delta)$ by Lemma \ref{Lemma5} since $(E\cdot\Delta)<0$. Now $\dim R[\mathcal I]/P_E=2$ by Proposition \ref{Prop2}.  After replacing $\Delta$ with a multiple of $\Delta$, we may assume that the constant $n_0$ of 
Proposition \ref{Prop35} is $n_0=1$.
For $n\gg0$, $P_{E,n}:=P_E\cap R[t^{n}I_{n}]$ is such that 
$\dim R[t^{n}I_{n}]/P_{E,n}=2$. Let $D$ be the one dimensional subscheme $\mbox{Proj}(R[\mathcal I]/P_F)$ of 
$Z(\mathcal I)$  and Let $G_n$ be the curve $\mbox{Proj}(R[t^nI_n]/P_{E,n})$
in $Y_{n}=\mbox{Proj}(R[t^{n}I_{n}])$ and $H_n=G_n\cap (Y_{n}\setminus V_{n})$. For $n\gg 0$, $Y_{n}\setminus V_{n}$ is normal by (\ref{eq81}). Since $\phi_{n}$ is birational with $\phi_{n}(E)=H_n$ and $X$, $Y_n\setminus V_n$ are normal,  we have that $\mathcal O_{Y_{n},H}\cong\mathcal O_{X,E}$ for $n\gg 0$. By (\ref{eq81}), $\mathcal O_{Y_{n,H}}\cong \mathcal O_{Z(\mathcal I),D}$ for $n\gg 0$. 
 Thus $A_{\tilde P}\cong \mathcal O_{X,E}$.

 If $Q=P_E$, then the natural valuation $v_E$ of the valuation ring $\mathcal O_{X,E}$ has center $\alpha$ on $Z(\mathcal I)$ but has center $E$ on $X$, which is not contained in $\mbox{sBL}(-\Delta)$, giving a contradiction by Lemma \ref{Lemma22}. Thus we have that $\tilde P$ is not equal to $m_A$.
 
By Theorem \ref{Theorem1}, there exists a valuation $\mu$ of the field $(A/\tilde P)_{\tilde P}$ such that $\mu$ dominates $A/\tilde P$. We have that $R/m_R\subset A/\tilde P$ since $P_E\cap R=m_R$. Thus $(A/\tilde P)_{\tilde P}$ is the function field $L=k(E)$ of the projective curve $E$ over $k=R/m_R$. So $L$ has transcendence degree 1 over $k$ and $\mu$ is trivial on $k\setminus \{0\}$, since $k\subset (A/\tilde P)/m_A(A/\tilde P)\subset L\mu$, the residue field of $\mu$.
Thus $\mu$ is a valuation of the function field $L/k$, so  $\mu L\cong\ZZ$ (e.g. by Theorem VI.14.31 \cite{ZS2}).

Choosing $x\in L$ such that $\mu(x)=1$, we have that  $L$ is a finite extension of $k(x)$ and $\mathcal O_{\mu|k(x)}=k[x]_{(x)}$ (the localization of 
$k[x]$ at the maximal ideal $(x)$ of $k[x]$). Let $n$ be the maximal ideal of the integral closure $B$ of $k[x]_{(x)}$ in $L$ such that $B_n=\mathcal O_{\mu}$ (Proposition 2.38 \cite{RTM}). $B_n$ is a 1 dimensional normal local ring, so it is a regular local ring and $nB_n=(\overline u)$ for some regular parameter $\overline u\in B_n$. Thus $\mathcal O_{\mu}$ is the local ring $\mathcal O_{E,\beta}$ of some closed point $\beta\in E$ (since $E$ is a projective curve over $k$ and so is proper over $k$). 

Let $v_E$ be the divisorial valuation of $K$ corresponding to $E$, so that $\mathcal O_{v_E}=\mathcal O_{X,E}$. Let $\omega=v_E\circ\mu$ be the composite valuation of $v_E$ and $\mu$. This  concept is explained on page 43 \cite{ZS2} and on pages 56-57 of \cite{RTM}. 

By Theorem VI.10.17 \cite{ZS2}, we have a natural short exact sequence
$$
0\rightarrow \mu L\rightarrow \omega K\rightarrow v_E K\rightarrow 0,
$$
where $\mu L\cong \ZZ$ and $v_EK\cong  \ZZ$. Thus $\omega K\cong \ZZ^2$, with the lex order.  By construction, $\omega$ has the center $\alpha$ on $Z(\mathcal I)$.

Let $q$ be the center of $\omega$ on $X$. Then $q\in E$ and $\mathcal O_{X,q}$ is dominated by $\omega$.  Thus $\beta=q$. By Lemma \ref{Lemma22}, $q\in \mbox{sBL}(-\Delta)$. We have earlier shown that $E\not\subset \mbox{sBL}(-\Delta)$. Since $\mbox{sBL}(-\Delta)$ is a simple normal crossings divisor which is a union of exceptional curves $E_i$ by Theorem \ref{Theorem2} 3), and these $E_i$ are nonsingular, there exists a nonsingular projective curve $F\subset \mbox{sBL}(-\Delta)$ over $k=R/m_R$ which contains $q$. Thus in the two dimensional regular local ring $\mathcal O_{X,q}$, we have regular parameters $s,t$ such that $s=0$ is a local equation of $F$ at $q$ and $t=0$ is a local equation of $E$ at $q$. Given $g\in \mathcal O_{X,q}$, let $\overline g$ be the residue of $g$ in $\mathcal O_{X,q}/(t)=\mathcal O_{E,q}$. We have that $\overline s$ is a regular parameter in $\mathcal O_{E,q}$. Thus for $f\in \mathcal O_{X,q}$, writing $f=t^ag$ where $g\in \mathcal O_{X,q}$ is such that $t\not | g$, we have that
$\omega(f)=(a,\mu(\overline g))$. In particular, we have that $\omega(s)=(0,1)$.

Expand $\Delta=\sum a_iF_i$ where without loss of generality, $F_1=E$ and $F_2=F$. There exists a bound $0<b$  and an ideal  $J_n\subset \mathcal O_{X,q}$ such that $J_n$ is an $m_q$-primary ideal or $J_n=\mathcal O_{X,q}$, such that  
$$
\Gamma(X,\mathcal O_X(-n\Delta))\mathcal O_{X,q}=t^{a_1n}s^{a_2n+c_n}J_n
$$
for $n\ge 1$, where $0<c_n<b$, since $E\not\subset \mbox{sBL}(-\Delta)$ and $F\subset \mbox{sBL}(-\Delta)$. 
This follows since 
$$
a_2n+c_n=v_F(\Gamma(X,\mathcal O_X(-n\Delta)\mathcal O_{X,q})=v_F(I_n)
$$
where $v_F$ is the valuation associated to $F$. The inequality $0<c_n$ then follows from Proposition \ref{Prop8} and $c_n<b$ follows from  
Proposition 5.3 \cite{CNg}.

The short exact sequences
$$
0\rightarrow \mathcal O_X(-n\Delta-E)\rightarrow\mathcal O_X(-n\Delta)\rightarrow \mathcal O_X(-n\Delta)\otimes\mathcal O_E\rightarrow 0
$$
 induce  maps 
 $\Gamma(X,\mathcal O_X(-n\Delta))\mathcal O_{X,q}\rightarrow \mathcal O_X(-n\Delta)\otimes\mathcal O_{E,q}$ with image 
 $$
 s^{c_n}J_n\mathcal O_X(-n\Delta)\otimes\mathcal O_{E,q}\cong
 s^{c_n}J_n\mathcal O_{X,q}/(t).
 $$
  $\overline s$ is a regular parameter in the 1 dimensional regular local ring $\mathcal O_{E,q}$, so $s^{c_n}J_n\mathcal O_{X,q}/(t)=\overline s^{c_n+d_n}(\mathcal O_{X,q}/(t))$ for some $d_n\ge 0$. That is, 
$$
\Lambda_n := \mbox{Image}(\Gamma(X,\mathcal O_X(-n\Delta))\rightarrow \Gamma(X,\mathcal O_X(-n\Delta))\otimes\mathcal O_E))
$$
is contained in $\Gamma(X,\mathcal O_X(-n\Delta)\otimes\mathcal O_E(-(c_n+d_n)q))$. We have  natural short exact sequences 
$$
0\rightarrow \Lambda_n\rightarrow \Gamma(X,\mathcal O_X(-n\Delta)\otimes\mathcal O_E)\rightarrow H^1(X,\mathcal O_X(-n\Delta-E)).
$$
By Corollary 5.7 \cite{CNg}, since $-\Delta$ is nef, there exists $e\ge 1$ such that 
$$
\ell_R(H^1(X,\mathcal O_X(-n\Delta-E)))\le e
$$
for $n\ge 0$. Thus
\begin{equation}\label{eq50}
U_n:=\dim_{R/m_R}\Gamma(E,\mathcal O_X(-n\Delta)\otimes\mathcal O_E)/\Gamma(E,\mathcal O_X(-n\Delta)\otimes\mathcal O_E(-(c_n+d_n)q))\le e
\end{equation}
for all $n\ge 1$. $E$ is a projective curve over the field $k:=R/m_R$. Let $\kappa(q)=\mathcal O_{E,q}/m_q$. Let 
$$
\sigma(n)=\frac{-n(\Delta\cdot E)-(2p_a(E)-1)}{[\kappa(q):k]}
$$
for $n\ge 0$. Let $n_0:=2p_a(E)-2$. There exists $n_1$ such that $\lfloor \sigma(n)\rfloor>e$ for $n\ge n_1$ (where $\lfloor x\rfloor$ is the greatest integer less than or equal to a real number $x$).
Let $n_2=\max\{n_0,n_1\}$. Then for $n>n_2$,
$$
\chi(\mathcal O_X(-n\Delta)\otimes\mathcal O_E)=h^0(\mathcal O_X(-n\Delta)\otimes\mathcal O_E)
$$
by equation (\ref{eq43}). If $c_n+d_n\ge \sigma(n)$, then
$$
\begin{array}{lll}
h^0(\mathcal O_X(-n\Delta)\otimes\mathcal O_E(-(c_n+d_n)q))&\le& h^0(\mathcal O_X(-n\Delta)\otimes\mathcal O_E(-\lfloor \sigma(n)\rfloor q))\\
&=&\chi(\mathcal O_X(-n\Delta)\otimes\mathcal O_E(-\lfloor \sigma(n)\rfloor q))\\
&=& -n(\Delta\cdot 
E)-\lfloor \sigma(n)\rfloor[\kappa(q):k]+1-p_a(E)
\end{array}
$$
by (\ref{eq43}) and (\ref{eq44}).
Thus by (\ref{eq44}),
$$
\begin{array}{lll}
e&\ge &U_n=h^0(\mathcal O_X(-n\Delta)\otimes\mathcal O_E)-h^0(\mathcal O_X(-n\Delta)\otimes\mathcal O_E(-(c_n+d_n)q))\\
&\ge& 
\chi(\mathcal O_X(-n\Delta)\otimes\mathcal O_E)-[-n(\Delta\cdot E)-\lfloor \sigma(n)\rfloor [\kappa(q):k]+1-p_a(E)]\\
&=& -n(\Delta\cdot E)+1-p_a(E)+n(\Delta\cdot E)+\lfloor \sigma(n)\rfloor [\kappa(q):k]-1+p_a(E)\\
&=& \lfloor \sigma(n)\rfloor [\kappa(q):k]>e
\end{array}
$$

which is impossible. So $c_n+d_n<\sigma(n)$. Thus
$$
\chi(\mathcal O_X(-n\Delta)\otimes\mathcal O_E(-(c_n+d_n)q))=h^0(\mathcal O_X(-n\Delta)\otimes\mathcal O_E(-(c_n+d_n)q))
$$
by (\ref{eq43}) and the definition of $\sigma(n)$.

By the Riemann Roch Theorem, equations (\ref{eq4*}) and (\ref{eq44}),
$$
U_n=\chi(\mathcal O_X(-n\Delta)\otimes\mathcal O_E)-\chi(\mathcal O_X(-n\Delta))\otimes\mathcal O_E(-(c_n+d_n)q))=(c_n+d_n)[\kappa(q):k].
$$

so by (\ref{eq50}),
$$
U_n=(c_n+d_n)[\kappa(q):k]\le e 
$$
which implies that
$$
c_n+d_n\le\frac{e}{[\kappa(q):k]}
$$
for $n\ge n_2$. Now $\omega(I_n)=(na_1,na_2+c_n+d_n)$ and $\frac{\omega(I_n)}{n}=(a_1,a_2+\frac{c_n+d_n}{n})$, so 
$\gamma_{\omega}(\mathcal I)=(a_1,a_2)$, and since $c_n>0$ for all $n$, $\frac{\omega(I_n)}{n}\ne \gamma_{\omega}(\mathcal I)$
for all $n>n_2$, a contradiction to Proposition \ref{Prop6}. Thus for a valuation $v$ of $K$ which has a center in $\mbox{sBL}(-\Delta)$ in $X$, we have that $v$ does not have a center in $Z(\mathcal I)$.
\end{proof}

\subsection{Proofs of Theorem 6 and  Theorem 7}\label{SubProof}
 
The proof of Theorem \ref{Theorem6} now follows from Theorem \ref{Theorem4},  Theorem \ref{Theorem5} and the 
discussion before the statement of Theorem \ref{Theorem4}.

We now prove Theorem \ref{Theorem7}. The fact that 1) is equivalent to 3) is by Proposition \ref{Prop6}. 2) implies 1) follows from  Theorem \ref{Theorem4} and 1) implies 2) is Theorem \ref{Theorem5}.


\begin{thebibliography}{1000000000}


\bibitem{RTM} S. Abhyankar, Ramification Theoretical Methods in Algebraic Geometry, Annals of Mathematics  Studies no. 43, Princeton Univ. Press, 1959.



\bibitem{BCK} T. Bauer, M. Caibar, G. Kennedy, Zariski decomposition: A new (old) chapter of linear algebra, Amer. Math. monthly 119 (2012), 25 - 41.


\bibitem{BH} W. Bruns and J. Herzog, Cohen Macaulay rings, revised edition, Cambridge studies in advanced mathematics 39,  Cambridge University Press, 1993.


\bibitem{CJS} V. Cossart, U. Jannsen, S. Saito, Desingularization: Invariants and strategy, Lecture Notes in Mathematics 2270, Springer, 2020.


\bibitem{C2} S.D. Cutkosky, On unique and almost unique factorization of complete ideals II, Inventiones Math. 98 (1989), 59 - 74.

 773 (2021), 19 - 34.
 
 \bibitem{C5} S.D. Cutkosky, The Minkowski equality of filtrations, Advances in Math. 388 (2022).
 
 \bibitem{CNg} Analytic Spread of filtrations on two dimensional normal local rings, Nagoya Math. J. 249 (2023), 239 - 268.

\bibitem{AG} S.D. Cutkosky, Introduction to Algebraic Geometry, Amer. Math. Soc. 2018

\bibitem{CPS1} S.D. Cutkosky and P. Sarkar, Multiplicities and mixed multiplicities of arbitrary filtrations, to appear in Research in the Mathematical Sciences, volume in honor of J\"urgen Herzog. 

\bibitem{CPS} S.D. Cutkosky and P. Sarkar, Analytic spread of filtrations and symbolic algebras, Journal of the London Math. Soc.106 (2022), 2635 - 2662. 


 \bibitem{DDGHN} H. Dao, A. De Stefani, E. Grifo, C. Huneke and L. 
N\'u\~nez-Betancourt, Symbolic powers of ideals, in  Singularities and foliations, geometry, topology and applications, 387 - 432, Springer Proc. Math. Stat. 222, Springer 2018.

\bibitem{DM} H. Dao, J.Monta\~no, Symbolic analytic spread, upper bounds and applications, J. Inst. Math. Jussieu (2021), 1969-1981.



\bibitem{GNW} S. Goto, K. Nishida and K. Watanabe, Non Cohen-Macaulay symbolic blowups fo space monomial curves and counter examples to Cowsik's question., Proc. A.M.S. 120 (1994), 383 - 392.

\bibitem{EGAII} A. Grothendieck and J. Dieudonn\'e, El\'ements de G\'eom\'etrie Alg\'ebrique II, \'Etude globale \'el\'ementaire de quelques classes de morphisms Publ. Math. IHES 8 (1961).




\bibitem{H} R. Hartshorne, Algebraic Geometry, Graduate Texts in Mathematics, No. 52. Springer-Verlag, New York-Heidelberg, 1977.

\bibitem{LA} R. Lazarsfeld, Positivity in Algebraic Geometry I and II, Springer Verlag, 2004.

\bibitem{Lin} Q. Lin, Algebraic Geometry and Arithmetic Curves, Oxford graduate texts in mathematics 6, translated by Reinie Ern\'e, Oxford University Press, 2002.

\bibitem{Li2} J. Lipman, Rational singularities with applications to algebraic surfaces and unique factorization, Publ. Math. IHES 36 (1969), 195 - 279.

\bibitem{Li} J. Lipman, Desingularization of 2-dimensional schemes, Annals of Math. 107 (1978), 115 - 207.

	
\bibitem{Li3} J. Lipman, Equimultiplicity, reduction and blowing up,  R.N. Draper (Ed.), Commutative Algebra, Lect. Notes Pure Appl. Math., vol. 68, Marcel Dekker, New York (1982), pp. 111-147.




\bibitem{Mil} J.S. Milne, \'Etale cohomology, Princeton University Press, 1980.




\bibitem{Na} M. Nagata, Lectures on the fourteenth problem of Hilbert, Tata Institute of Fundamental Research,  Bombay, 1965.


\bibitem{ORWY}  T. Okuma, M. E. Rossi, K. Watanabe, K. Yoshida, Normal Hilbert coefficients and elliptic ideals in normal two dimensional singularities, Nagoya Math. J. 248 (2022), 799 -800.


\bibitem{OWY} T. Okuma, K. Watanabe, K. Yoshida,  Gorensteinness for normal tangent cones of elliptic ideals, J. Pure App. Algebra 229 (2025).



\bibitem{Rob} P. C. Roberts, A prime ideal in a polynomial ring whose symbolic blowup is not Noetherian, Proc. A.M.S.  94 (1985), 589 - 592.


\bibitem{Ru} F. Russo, A local version of some results of Zariski, Comm. Algebra 25 (1997), 1783 - 1795.

\bibitem{HS} I. Swanson and C. Huneke, Integral Closure of Ideals, Rings and Modules, Cambridge University Press, 2006.


\bibitem{Z1} O. Zariski, Foundations of a general theory of birational correspondence, Trans. Amer. Math. Soc. 53 (1943), 490 - 542.

\bibitem{Z3} O. Zariski,   Interpr\'etations alg\'ebrico-g\'eom\'etriques du quatorzi\`eme probl\`eme de Hilbert, Bull. Sci. Math. 78 (1954), 155 - 168.

\bibitem{Z} O. Zariski, The theorem of Riemann-Roch for high multiples of an effective divisor on an algebraic surface, Annals of Math. 76 (1962), 560-615.



\bibitem{ZS1} O. Zariski and P. Samuel, Commutative Algebra Volume I, Van Nostrand, 1958.
\bibitem{ZS2} O. Zariski and P. Samuel, Commutative Algebra Volume II, Van Nostrand, 1960.
\end{thebibliography}
\end{document}